\def\cal{\mathcal}
\def\Bbb{\mathbb}
\def\r{\rangle}
\def\l{\langle}
\def\vtr{\trianglelefteq}
\newtheorem{thm}{Theorem}[section]
\newtheorem{prop}[thm]{Proposition}
\newtheorem{exm}[thm]{Example}
\newtheorem{lemma}[thm]{Lemma}
\newtheorem{cor}[thm]{Corollary}
\newtheorem{defn}[thm]{Definition}
\newtheorem{rem}[thm]{Remark}
\begin{document}
\title[Configuration Lie groupoids and orbifold braid groups]{Configuration
 Lie groupoids and orbifold\\ braid groups}
\author[S.K. Roushon]{S.K. Roushon}
\address{School of Mathematics\\
Tata Institute\\
Homi Bhabha Road\\
Mumbai 400005, India}
\email{roushon@math.tifr.res.in} 
\urladdr{http://www.math.tifr.res.in/\~\ roushon/}
\date{\today}
\begin{abstract}
  We propose two definitions of configuration Lie groupoids and in
  both the cases we prove
  a Fadell-Neuwirth type fibration theorem for a class of 
  Lie groupoids. We show that this is the best possible extension,
  in the sense that, for the class of Lie
  groupoids corresponding to global quotient orbifolds with nonempty 
  singular set, the fibration theorems do not hold. 
  Secondly, we prove a short exact sequence of  
  fundamental groups (called {\it pure orbifold braid groups}) of one
  of the configuration Lie groupoids of the Lie groupoid corresponding
  to the punctured complex plane 
  with cone points. This shows the possibility of a quasifibration type Fadell-Neuwirth
  theorem for Lie groupoids.

  As consequences, first we see that the 
  pure orbifold braid groups have poly-virtually free structure, which
  generalizes the classical braid group case. We also provide an explicit set of
  generators of the pure orbifold braid groups. Secondly, we prove
  that a class of affine and finite complex Artin groups are virtually poly-free,
  which partially answers the question if all Artin groups are
  virtually poly-free ([\cite{MB}, Question 2]). Finally, combining this poly-virtually free 
  structure and a recent result (\cite{BFW}), 
  we deduce the Farrell-Jones isomorphism conjecture for the above class of orbifold braid groups.
  This also implies the conjecture for the case of the affine Artin group of type $\tilde D_n$, which
  was left open in [\cite{Rou}, Problem].\end{abstract}

\keywords{Configuration space, Lie groupoid, orbifold fibration, Artin group, orbifold braid group,
  orbifold fundamental group, Farrell-Jones isomorphism conjecture, poly-free group.}

\subjclass[2020]{Primary: 14N20, 22A22, 20F36 Secondary: 55R80, 57R18.}
\maketitle

\tableofcontents

\section{Introduction}
For a topological space $X$ and a positive integer $n$, the {\it configuration space} of
$n$-tuple of distinct points of $X$ is defined by the following.

$$PB_n(X):=X^n-\{(x_1,x_2,\ldots , x_n)\in X^n\ |\ x_i=x_j\   
\text{for some}\ i,j\in \{1,2,\ldots ,n\}\}.$$ 

\noindent
This is also called the {\it pure braid space} of $X$.

Let $M$ be a connected manifold of dimension $\geq 2$. Then, the Fadell-Neuwirth
fibration theorem ([\cite{FN}, Theorem 3]) says that the projection map $M^n\to M^{n-1}$
to the first $n-1$ coordinates defines a locally trivial fibration
$f:PB_n(M)\to PB_{n-1}(M)$, with fiber homeomorphic to $M-\{(n-1)\
\text{points}\}$.

It is an important and classical subject to study the topology and
geometry of the configuration space of $2$-dimensional manifolds.
This has applications in
different areas of Mathematics and Physics. \cite{Ar}, \cite{FN}
and \cite{Br} are among some of the 
fundamental works on this subject. The
Fadell-Neuwirth fibration theorem is the
key fundamental result, to understand the
homotopy theory of the configuration spaces of manifolds. The work in \cite{All} 
turned the attention to consider $2$-dimensional orbifolds,  
its configuration orbifold and introduced orbifold braid groups.
This has connection with some of the Artin groups (Theorem \ref{All}). On the 
other hand, a systematic and categorical way to study orbifolds is via 
Lie groupoids (\cite{IM}).

In this paper we define two notions of a 
fibration between Lie groupoids, generalizing Hurewicz fibration (Definition \ref{lf}).
We also formulate two definitions of configuration Lie
groupoids of a Lie groupoid (Definitions \ref{clg} and \ref{clgb}).
Then, we prove a Fadell-Neuwirth type fibration 
theorem (Theorem \ref{fibration}) for a class of Lie groupoids
(Definition \ref{c-group}) with respect to both
the definitions. We also show that this is the best possible class of Lie groupoids
to which the fibration theorem can be extended.  This is in the sense that the
fibration theorem fails, with respect to our definitions of a fibration,
for all Lie groupoids corresponding to global quotient 
orbifolds with nonempty singular set (Proposition \ref{mt}).

Finally, we study the fundamental
groups (called {\it pure orbifold braid group}) of the
configuration Lie groupoids (with respect to one definition) of 
Lie groupoids arising from the class $\cal S$ of genus zero $2$-dimensional
orbifolds with cone points and at least one puncture. We prove a
short exact sequence of pure orbifold braid groups (Theorem \ref{esg}).
This short exact sequence is similar to the one which 
results from the long exact sequence of homotopy groups of the
Fadell-Neuwirth fibration $f$ for the complex plane. That is, for
any member of $\cal S$, $f$ shows a
quasifibration type property in low degree. In fact, for some members
of $\cal S$, $f$ can be shown to be a quasifibration. This indicates 
the possibility of a quasifibration type Fadell-Neuwirth theorem for Lie groupoids.
See \cite{Rou21} for some more work in this direction.

By \cite{All}, elements of the pure orbifold
braid group have pictorial braid representation, which is our
main motivation for this work. We develop a simple method of
stretching a string of a braid to understand the pure orbifold braid groups. See
Lemma \ref{stretching} and Proposition \ref{gen}.

There are now multiple consequences of the above study of the pure orbifold braid groups.

\medskip
$\bullet$ On the
way of proving the above short exact sequence, we needed to find an explicit set of generators
of the pure orbifold braid group (Lemma \ref{orbi-gen}). Note that, finding an explicit set of generators
is an important problem in group theory. For the classical braid groups a set
of generators was given in \cite{Ar}. This gave us the crucial ingredient
to find the generators of the pure orbifold braid groups.

$\bullet$ From the proof of Theorem \ref{esg},
as a particular case, we provide an explicit set of
generators of the kernel of the homomorphism $f_*$ (Corollary \ref{FNF}). Giving 
an explicit set of generators of this group as a subgroup of the classical pure braid group, 
seems to be new.

$\bullet$ We see that the pure 
orbifold braid group, has a poly-virtually free structure, as well as the structure of being an iterated
semi-direct product of virtually
finitely generated free groups (Corollary \ref{vfg}). The classical pure braid 
group has such a structure but with finitely generated free
groups (\cite{CS}). 

$\bullet$ The poly-virtually free structure of the pure orbifold braid group is also used
in Theorem \ref{mtpf}, to prove that a class of affine and finite complex Artin groups are
virtually poly-free. This gives a partial answer to
the question, if all Artin groups are virtually poly-free ([\cite{MB}, Question 2]).
A recent work
in this direction is \cite{BMP}, also see \cite{W}
for a simple proof of the result of \cite{BMP}.
Poly-free groups have nice properties like locally indicable and right orderable. Also, 
an inductive argument using [\cite{DS}, Theorem 2.3] shows that 
a virtually poly-free group has finite asymptotic dimension.

$\bullet$ Our final application of the poly-virtually free structure of the pure
orbifold braid group, is to prove the Farrell-Jones isomorphism conjecture for any group which
contains the pure orbifold braid group (Theorem \ref{FJC}),
as a subgroup of finite index.
Consequently, this settles the case of the Artin group
of type $\tilde D_n$, left open in [\cite{Rou}, Problem]. There is much interest in proving the Farrell-Jones
conjecture for Artin groups in recent times. Also see 
\cite{HO} and \cite{W}.

We conclude the introduction with a few words on the organization of
the paper. In the next section
we state our main results, and also recall some background required for the
statements. Section 3 contains some more background materials on Lie
groupoids and proofs of some lemmas left in Section 2. In this section we
also formalize the results we have proved, in the category of orbifold groupoids.
In Section 4, we give a detailed introduction
to orbifold braid groups and use the stretching technique to prove some crucial results.
The proofs of the main results are given in Section 5.

\section{Statements of results} \label{sor}
We start with some conventions we will follow. All manifolds
are assumed to be smooth, Hausdorff and
paracompact, and by a `map' we will always mean a smooth map.
Also by a `fibration' we will mean a Hurewicz fibration. 

Let $\cal G$ be a Lie groupoid with object space ${\cal G}_0$, and 
morphism space ${\cal G}_1=\cup_{x,y\in {\cal G}_0}mor_{\cal G}(x,y)$. 
$s, t:{\cal G}_1\to {\cal G}_0$ are 
the source and the target maps, that is for a morphism $\alpha\in mor_{\cal G}(x,y)$,
$s(\alpha)=x$ and $t(\alpha)=y$. $s$ and $t$ are assumed to be smooth
submersions. A {\it homomorphism} $f$ between two Lie
groupoids is a smooth functor, and we denote the object level map by
$f_0$, and the morphism level map by $f_1$. For any $x\in {\cal G}_0$, 
the set $t(s^{-1}(x))$ of all points which are the targets 
of morphisms emanating from $x$, is called the {\it orbit} of $x$. The 
space (with quotient topology) of all orbits is called the 
{\it base space}, and is denoted by $|{\cal G}|$.

\begin{defn}\label{c-group} {\rm A Lie groupoid $\cal G$ is called a {\it c-groupoid}, if 
the above quotient map $\kappa:{\cal G}_0\to |{\cal G}|$ is a covering map.
$\cal G$ is said to be {\it Hausdorff} if the space
$|{\cal G}|$ is Hausdorff. Clearly, a $c$-groupoid is Hausdorff.}\end{defn}

For $x\in {\cal G}_0$, the {\it star} $St_x$ at $x$ is defined by
$s^{-1}(x)=\cup_{y\in {\cal G}_0}mor_{\cal G}(x,y)$. $St_x$ is a
submanifold of ${\cal G}_1$, since $s$ is a submersion. The dimension of
the manifold ${\cal G}_0$ is called the {\it dimension} of the Lie groupoid $\cal G$.
The {\it isotropy} group at $x\in {\cal G}_0$ is defined by ${\cal G}_x:=mor_{\cal G}(x,x)$.
${\cal G}_x$ is a Lie group.

Our main examples of Lie groupoids for this work is the following.

\begin{exm}\label{tg}{\rm Let $M$ be a manifold and a Lie group $H$ 
    acting on $M$ smoothly. We construct a Lie groupoid ${\cal G}(M,H)$, called
    the {\it translation groupoid}, 
    out of this information. Define ${\cal G}(M,H)_0=M$, ${\cal G}(M,H)_1=H\times M$,
    $s(h,x)=x$, $t(h,x)=h(x)$, $u(x)=(1,x)$, $i(h,x)=(h^{-1},x)$ and $(h',h(x))\circ (h,x)=(h'h,x)$.
  When $H$ is the trivial group then ${\cal G}(M, H)$ is called the {\it
    unit groupoid}, and is identified with $M$. If $H$ is discrete
  and acts on $M$ freely and properly
  discontinuously, then ${\cal G}(M,H)$ is a $c$-groupoid. If the action is effective and
  properly discontinuous then, $M/H$ is an orbifold and hence
  ${\cal G}(M,H)$ is Hausdorff ([\cite{Thu}, Proposition 5.2.6]).}\end{exm}
    
\subsection{Fibrations of Lie groupoids and the Fadell-Neuwirth fibration theorem}
Defining a covering map between orbifolds is well-known, it satisfies properties parallel
to an ordinary covering map of spaces (see [\cite{Thu}, Definition 5.3.1]). 
In [\cite{Thu}, Definition 5.11.1] a definition of an orbifold fiber bundle was given but with
fiber a manifold. There are other definitions of an orbifold fibration in
the literature, but those are with 
assumptions suitable for some specific purposes.
For example, these definitions either give only `submersion' or
`path lifting property' when restricted
to manifolds.

To define a morphism between orbifolds, independent of atlases,
we need to first look at orbifolds as Lie groupoids. This problem was
solved in \cite{IM}, by considering the category of Lie groupoids and generalized
maps. A {\it generalized map} is a certain equivalence class of
homomorphisms. In the next section, we will describe the concept of `orbifold
groupoids' which helps in accommodating orbifolds in the category of
Lie groupoids.

We now proceed to give two definitions of a fibration for orbifolds, in the set up 
of Lie groupoids, which restrict to Hurewicz fibration
when there is no singularity in the orbifolds, that is, for
manifolds.

To define one of the notions of a fibration between Lie groupoids, we
need to generalize the construction of a
Lie groupoid from 
a Lie group action on a manifold as in Example \ref{tg}, to a Lie
groupoid action on a manifold.

\begin{defn}\label{leftspace}{\rm 
Given a Lie groupoid $\cal G$ and a manifold $M$, $M$ is called a {\it (left) $\cal G$-space} if
there is a smooth map $\pi:M\to {\cal G}_0$, and
an {\it action map}  
$$\mu:{\cal G}_1\times_{{\cal G}_0}M\to M$$ satisfying the following
properties. For $\alpha\in {\cal G}_1$ and $x\in M$ with $\pi(x)=s(\alpha)$ (defining the fibered product),
$\pi(\mu(\alpha,x))=t(\alpha)$. Finally, if $\pi(x)=y$ then $\mu(1_y,x)=x$ and 
$\mu(\alpha,\mu(\beta,x))=\mu(\alpha\beta,x)$ for $\beta\in mor_{\cal G}(y,w)$ and $\alpha\in
mor_{\cal G}(w,z)$.}\end{defn}

\begin{defn} \label{lf}{\rm Let $\cal H$ and $\cal G$ be two Lie
    groupoids and $f:{\cal H}\to {\cal G}$ a homomorphism, so that $f_0:{\cal
      H}_0\to {\cal G}_0$ is a submersion fibration. 

$\bullet$ $f$ is called an {\it
      a-fibration} if  ${\cal H}_0$ is a
    $\cal G$-space with $f_0=\pi$, ${\cal H}_1={\cal G}_1\times_{{\cal
        G}_0}{\cal H}_0$ and $f_1$ is the first projection. The source and the target
  maps of $\cal H$ are respectively the second projection and the
  action map.

$\bullet$ $f$ is called a {\it b-fibration} if the map 
${\cal H}_1\to {\cal G}_1\times_{{\cal G}_0}{\cal H}_0$ defined by 
$\alpha\mapsto (f_1(\alpha),s(\alpha))$ is a surjective submersion.}\end{defn}

\begin{rem}{\rm The idea of an $a$-fibration comes from the way
    covering Lie groupoid of a 
    Lie groupoid is defined. See [\cite{IM}, \S5.3]. The same framework works
    to define a fiber bundle or a 
    vector bundle on a Lie groupoid also. And the $b$-fibration
    definition is primarily motivated from \cite{HF}. It can be shown that the second condition in the
    definition of a $b$-fibration is equivalent
    to demanding that $f_1|_{St_x}:St_x\to St_{f_0(x)}$ is a
    submersion for all $x\in {\cal H}_0$.}\end{rem}

Now, we are in a position to define the configuration Lie groupoids 
of a Lie groupoid.

The first definition is given below, which
facilitates to prove an $a$-fibration type Fadell-Neuwirth fibration theorem.

\begin{defn}\label{clg}{\rm Let $\cal G$ be a Hausdorff Lie groupoid. 
The {\it a-configuration Lie groupoid} $PB^a_n({\cal G})$ of 
    $\cal G$ has the object space defined as follows.

  $$PB_n({\cal G})_0:={\cal G}_0^n-\{(x_1,x_2,\ldots , x_n)\in {\cal
    G}_0^n\ |\ t(s^{-1}(x_i))=t(s^{-1}(x_j))\ $$$$ 
\text{for some}\ i,j\in \{1,2,\ldots ,n\}\}.$$  
Since $\cal G$ is Hausdorff, $PB_n({\cal G})_0$ is an open set 
in ${\cal G}_0^n$. This follows from the fact that  
$(\kappa^n)^{-1}(PB_n(|{\cal G}|))=PB_n({\cal G})_0$ and $PB_n(|{\cal G}|)$
is open in $|{\cal G}|^n$, as $|{\cal G}|$ is Hausdorff.
Therefore, $PB_n({\cal G})_0$ is a smooth manifold.

The morphism space, the source and the target maps are defined inductively. We denote these 
 maps for $PB^a_n({\cal G})$ with a subscript `$n$', with the understanding 
that the corresponding maps with subscript `$1$' are those of $\cal G$.

For $n=1$, by convention $PB_1({\cal G})_0={\cal G}_0$. Therefore, define
$PB^a_1({\cal G})_1={\cal G}_1$. Hence, $PB^a_1({\cal G})$ is a Lie groupoid.

Assume, we have defined $PB^a_{n-1}({\cal G})_1$ and the structure maps so that
$PB^a_{n-1}({\cal G})$ is a Lie groupoid.

Define
$$PB^a_n({\cal G})_1:=PB^a_{n-1}({\cal G})_1\times_{PB_{n-1}({\cal G})_0}PB_n({\cal G})_0,$$
which is the fibered product of the source map
$s_{n-1}:PB^a_{n-1}({\cal G})_1\to PB_{n-1}({\cal G})_0$ and
the projection $PB_n({\cal G})_0\to PB_{n-1}({\cal G})_0$ to the first $n-1$ coordinates.
Note that, $PB^a_n({\cal G})_1$ is a smooth manifold because the projection map and the source map are
submersions.

Let $\alpha_n=(\alpha_{n-1},x)\in PB^a_n({\cal G})_1$, then $t_n$ and $s_n$ are defined as follows.
$$t_n(\alpha_n)=(t_{n-1}(\alpha_{n-1}), x_n), s_n(\alpha_{n-1},x)=x,$$
here $x=(x_1,x_2,\ldots, x_n)\in PB_n({\cal G})_0$. Clearly,
$t_n$ and $s_n$ are both smooth, but 
we still have to check that $t_n(\alpha_n)$ lies in the appropriate space. That is, $x_n$ and the
coordinates of $t_{n-1}(\alpha_{n-1})$ have distinct orbits. Note that, by induction,
$$\alpha_{n-1}=(\alpha_1,(x_1,x_2),(x_1,x_2,x_3),\cdots, (x_1,x_2,\ldots ,x_{n-1})),$$
for some $\alpha_1\in{\cal G}_1$ with $s_1(\alpha_1)=x_1$.
Therefore,
$t_n(\alpha_n)=(t_1(\alpha_1),x_2,\ldots, x_{n-1}, x_n)$.
We have to check $t_1(\alpha_1)$ and $x_n$ have distinct orbits, which is clear, since $x_1$ and
$t_1(\alpha_1)$ have the same orbit and $x_1$ and $x_n$ have distinct orbits. This completes
the definition.}\end{defn}

The following lemma gives the homomorphism which we need to show to be an $a$-fibration.

\begin{lemma} \label{pb} The projection $PB_n({\cal G})_0\to PB_{n-1}({\cal G})_0$
  to the first $n-1$ coordinates and
  the projection $PB^a_n({\cal G})_1\to PB^a_{n-1}({\cal G})_1$ to the
  first coordinate define a homomorphism
  $$F^a:PB^a_n({\cal G})\to PB^a_{n-1}({\cal G})$$ of Lie groupoids.\end{lemma}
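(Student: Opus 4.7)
The plan is to verify, step by step, that the two prescribed projection maps $F^a_0$ and $F^a_1$ constitute a smooth functor from $PB^a_n({\cal G})$ to $PB^a_{n-1}({\cal G})$. Everything should follow essentially formally from the inductive recipe for $PB^a_n({\cal G})_1$ as a fiber product, so the task is really one of bookkeeping rather than discovering a genuinely new construction.

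First I would check that the two maps land in the correct spaces. For $F^a_0$ this is immediate: if $(x_1,\ldots,x_n)$ has pairwise distinct orbits, so does the $(n-1)$-tuple $(x_1,\ldots,x_{n-1})$, so dropping the last coordinate gives a well-defined map $PB_n({\cal G})_0\to PB_{n-1}({\cal G})_0$, and it is smooth since it is a coordinate projection between open submanifolds of ${\cal G}_0^n$ and ${\cal G}_0^{n-1}$. For $F^a_1$, an element of $PB^a_n({\cal G})_1=PB^a_{n-1}({\cal G})_1\times_{PB_{n-1}({\cal G})_0}PB_n({\cal G})_0$ is a pair $(\alpha_{n-1},x)$, and projection to the first coordinate clearly lands in $PB^a_{n-1}({\cal G})_1$. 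Smoothness of $F^a_1$ is immediate, since the fiber product was shown in the definition to be a smooth manifold with its natural smooth projections.

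Next I would verify compatibility with source and target. Writing $x=(x_1,\ldots,x_n)$, the fiber product condition says $s_{n-1}(\alpha_{n-1})=(x_1,\ldots,x_{n-1})$, so
\[ s_{n-1}\bigl(F^a_1(\alpha_{n-1},x)\bigr)=s_{n-1}(\alpha_{n-1})=(x_1,\ldots,x_{n-1})=F^a_0(x)=F^a_0\bigl(s_n(\alpha_{n-1},x)\bigr). \]
Similarly, by the definition $t_n(\alpha_{n-1},x)=(t_{n-1}(\alpha_{n-1}),x_n)$, so
\[ F^a_0\bigl(t_n(\alpha_{n-1},x)\bigr)=t_{n-1}(\alpha_{n-1})=t_{n-1}\bigl(F^a_1(\alpha_{n-1},x)\bigr). \]
This is the only place where one has to remember the fiber-product structure; the identities and composition laws are then determined.

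Finally I would check functoriality on identities and composition. The unit at $x\in PB_n({\cal G})_0$ must be $(1_{(x_1,\ldots,x_{n-1})},x)$, and $F^a_1$ sends it to $1_{(x_1,\ldots,x_{n-1})}=1_{F^a_0(x)}$. For composition, two morphisms $(\alpha_{n-1},x)$ and $(\beta_{n-1},y)$ with $t_n(\alpha_{n-1},x)=s_n(\beta_{n-1},y)=y$ force $y=(t_{n-1}(\alpha_{n-1}),x_n)$, so their composite in $PB^a_n({\cal G})$ is $(\beta_{n-1}\circ\alpha_{n-1},x)$; applying $F^a_1$ gives $\beta_{n-1}\circ\alpha_{n-1}=F^a_1(\beta_{n-1},y)\circ F^a_1(\alpha_{n-1},x)$. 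There is no real obstacle here: the main (and only) subtlety is organizing the indices so that the inductive fiber-product data match up with the functoriality conditions, and this is precisely what the source/target compatibility above ensures.
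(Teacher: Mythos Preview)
Your proof is correct and follows essentially the same approach as the paper: both verify that $F^a$ is a smooth functor by checking compatibility with source, target, unit, and composition directly from the inductive fiber-product definition. The paper additionally spells out compatibility with the inverse map, but this is automatic for any functor between groupoids once composition and identities are preserved, so your omission is harmless.
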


Now, we give the second definition of the configuration Lie groupoid, which will
give a $b$-fibration type Fadell-Neuwirth fibration theorem.

\begin{defn}\label{clgb}{\rm We define the {\it b-configuration Lie groupoid}
    $PB_n^b({\cal G})$ of a Hausdorff Lie groupoid $\cal G$ as follows. The object space is
    the same as that of an a-configuration Lie groupoid, that is $PB_n({\cal G})_0$.
    Define the morphism space by 
$$PB_n^b({\cal G})_1:=(s^n, t^n)^{-1}(PB_n({\cal G})_0\times PB_n({\cal G})_0).$$
Here 
$(s^n, t^n):{\cal G}^n_1\to {\cal G}^n_0\times {\cal G}^n_0$
is defined coordinate-wise, that is $$(s^n,t^n)(\alpha_1,\alpha_2,\ldots, \alpha_n)=
((s(\alpha_1),s(\alpha_2),\ldots, s(\alpha_n)),(t(\alpha_1),t(\alpha_2),\ldots, t(\alpha_n))).$$}\end{defn}

Next, we have to give the homomorphism in this context, which we need to show to be a $b$-fibration.

\begin{lemma} \label{pb1} The projection maps 
  $PB_n({\cal G})_0\to PB_{n-1}({\cal G})_0$ and $PB^b_n({\cal G})_1\to PB^b_{n-1}({\cal G})_1$
  both to the first $n-1$ coordinates, define a homomorphism
  $$F^b:PB^b_n({\cal G})\to PB^b_{n-1}({\cal G})$$ of Lie groupoids.\end{lemma}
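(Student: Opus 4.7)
My plan is to verify directly that the projections define a well-defined smooth functor. The proof is essentially routine once one checks that everything stays inside the configuration subspaces, because all the structure of $PB^b_n(\mathcal{G})$ is inherited coordinate-wise from $\mathcal{G}^n$.

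First I would check that the projection $p_0:PB_n(\mathcal{G})_0\to PB_{n-1}(\mathcal{G})_0$ sending $(x_1,\ldots,x_n)\mapsto(x_1,\ldots,x_{n-1})$ is well-defined and smooth. Well-definedness is immediate: if the orbits $t(s^{-1}(x_i))$ for $i=1,\ldots,n$ are pairwise distinct, then so are the first $n-1$ of them; smoothness is clear since it is a coordinate projection between open subsets of ${\cal G}_0^n$ and ${\cal G}_0^{n-1}$. I would then check the same for the morphism-level projection $p_1:PB^b_n(\mathcal{G})_1\to PB^b_{n-1}(\mathcal{G})_1$, $(\alpha_1,\ldots,\alpha_n)\mapsto(\alpha_1,\ldots,\alpha_{n-1})$: if $(s(\alpha_1),\ldots,s(\alpha_n))$ and $(t(\alpha_1),\ldots,t(\alpha_n))$ both lie in $PB_n(\mathcal{G})_0$, then their truncations to the first $n-1$ coordinates lie in $PB_{n-1}(\mathcal{G})_0$ by the same reasoning, so $p_1(\alpha_1,\ldots,\alpha_n)\in(s^{n-1},t^{n-1})^{-1}(PB_{n-1}(\mathcal{G})_0\times PB_{n-1}(\mathcal{G})_0)=PB^b_{n-1}(\mathcal{G})_1$. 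Smoothness is again clear as a restriction of a coordinate projection.

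Next I would verify functoriality, i.e.\ that $(p_0,p_1)$ commutes with source, target, identities and composition. Since the source and target maps $s^n,t^n$ of $PB^b_n(\mathcal{G})$ and $s^{n-1},t^{n-1}$ of $PB^b_{n-1}(\mathcal{G})$ are defined coordinate-wise from $s,t$ of $\mathcal{G}$, the relations $s^{n-1}\circ p_1=p_0\circ s^n$ and $t^{n-1}\circ p_1=p_0\circ t^n$ are simply the statement that taking the first $n-1$ coordinates commutes with applying $s$ (resp.\ $t$) in each slot. The identity morphism at $x=(x_1,\ldots,x_n)$ is $(1_{x_1},\ldots,1_{x_n})$, which projects to the identity at $(x_1,\ldots,x_{n-1})$; and composition in $PB^b_n(\mathcal{G})_1$ is performed componentwise, so projecting to the first $n-1$ slots commutes with composition.

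I do not anticipate any real obstacle; the only place where one might pause is to confirm that $PB^b_n(\mathcal{G})_1$ really is a manifold (so that smoothness of $p_1$ makes sense), but this is given as part of Definition \ref{clgb} since $PB_n(\mathcal{G})_0\times PB_n(\mathcal{G})_0$ is open in ${\cal G}_0^n\times{\cal G}_0^n$ and $(s^n,t^n)$ is smooth, making $PB^b_n(\mathcal{G})_1$ an open submanifold of ${\cal G}_1^n$. Hence $F^b=(p_0,p_1)$ is a homomorphism of Lie groupoids, completing the proof.
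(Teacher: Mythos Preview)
Your proposal is correct and follows exactly the approach the paper intends: the paper lists the five commutation conditions $(a)$--$(e)$ a homomorphism must satisfy, remarks that checking them for $F^a$ and $F^b$ is straightforward, then verifies them only for $F^a$ and explicitly leaves the $F^b$ case to the reader. Your write-up supplies precisely those omitted routine checks (well-definedness, smoothness, and the coordinate-wise compatibility with $s,t,u,m$), so there is nothing to add.
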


Proofs of Lemmas \ref{pb} and \ref{pb1} will be given in the next
section.

Now, we are in a position to state our first theorem.
Recall the definition of a $c$-groupoid from Definition \ref{c-group}.

\begin{thm} \label{fibration} Let $\cal G$ be a 
  c-groupoid of dimension $\geq 2$. Then, the homomorphism
  $F^*:PB^*_n({\cal G})\to PB^*_{n-1}({\cal G})$ is a $*$-fibration.
  Here, $F^*$ is defined in Lemmas \ref{pb} and \ref{pb1}, for $*=a,b$, respectively.\end{thm}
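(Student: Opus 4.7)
The plan is to treat $*=a$ and $*=b$ in parallel by first proving that the common object map $F^*_0:PB_n(\mathcal{G})_0\to PB_{n-1}(\mathcal{G})_0$ is a submersion fibration, and then checking the extra structural requirement in each case.

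That $F^*_0$ is a smooth submersion is immediate, since it is the restriction of the coordinate projection $\mathcal{G}_0^n\to\mathcal{G}_0^{n-1}$ to an open submanifold. The main step is to show that $F^*_0$ is a Hurewicz fibration, and here I would exploit that $\mathcal{G}$ is a $c$-groupoid: $|\mathcal{G}|$ inherits from $\mathcal{G}_0$ via the covering $\kappa$ a manifold structure of the same dimension $\geq 2$. The classical Fadell--Neuwirth theorem applied to $|\mathcal{G}|$ gives a locally trivial fibration $f:PB_n(|\mathcal{G}|)\to PB_{n-1}(|\mathcal{G}|)$, whose standard proof produces, near any point $q=\kappa^{n-1}(p)$, an open $V\subset PB_{n-1}(|\mathcal{G}|)$ and a parametrized isotopy $H:V\times|\mathcal{G}|\times I\to|\mathcal{G}|$ with $H(q',\cdot,0)=\mathrm{id}$ and $H(q',\kappa(x_i),1)=q'_i$, where $p=(x_1,\ldots,x_{n-1})$. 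After shrinking $V$ so that the component $\tilde V$ of $(\kappa^{n-1})^{-1}(V)$ containing $p$ maps homeomorphically to $V$ and chosen evenly covered neighbourhoods of the $\kappa(x_i)$ are pairwise disjoint, I would lift $H$ across $\kappa$: viewing $I$ as the homotopy parameter and the second projection $V\times\mathcal{G}_0\to\mathcal{G}_0$ as initial lift, the covering homotopy lifting property produces a continuous $\tilde H:V\times\mathcal{G}_0\times I\to\mathcal{G}_0$. Applying the same lifting to the pointwise inverse isotopy and invoking uniqueness of lifts shows that each $\tilde H(q',\cdot,1)$ is a self-homeomorphism of $\mathcal{G}_0$ carrying $\kappa^{-1}\{\kappa(x_i)\}$ onto $\kappa^{-1}\{q'_i\}$, and the formula $(p',z)\mapsto(p',\tilde H(\kappa^{n-1}(p'),z,1))$ gives a local trivialization of $F^*_0$ over $\tilde V$.

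For the $a$-fibration, the remaining structural requirements---that $PB_n^a(\mathcal{G})_1$ equals the displayed fiber product and that $F^a_1$ is the first projection---are built into Definition \ref{clg} and Lemma \ref{pb}, and the $PB^a_{n-1}(\mathcal{G})$-action on $PB_n(\mathcal{G})_0$ with moment map $F^a_0$ is well defined because a morphism in $PB^a_{n-1}(\mathcal{G})$ moves each of the first $n-1$ coordinates within its orbit and so preserves distinctness from the unchanged $n$-th orbit. For the $b$-fibration, one checks that the map $\Phi:PB_n^b(\mathcal{G})_1\to PB_{n-1}^b(\mathcal{G})_1\times_{PB_{n-1}(\mathcal{G})_0}PB_n(\mathcal{G})_0$ sending $(\alpha_1,\ldots,\alpha_n)$ to $((\alpha_1,\ldots,\alpha_{n-1}),(s(\alpha_1),\ldots,s(\alpha_n)))$ is a surjective submersion: surjectivity is obtained by inserting the identity morphism $1_{y_n}$ in the $n$-th slot of any compatible pair, which is legal because $y_n$ lies in an orbit distinct from each $y_i$; and $d\Phi$ equals the identity on the first $n-1$ morphism factors and the submersion $ds:T_{\alpha_n}\mathcal{G}_1\to T_{y_n}\mathcal{G}_0$ on the last, hence surjects onto the tangent space of the target fiber product.

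The main technical obstacle I anticipate is verifying the joint continuity of $\tilde H$ in $(q',z,t)$ and that $\tilde H(q',\cdot,1)$ is a homeomorphism; both rely on a careful application of the covering homotopy lifting property over the product parameter space $V\times\mathcal{G}_0$, supported by the geometric preparation that $V$ is small enough for the removed sets $\kappa^{-1}\{\kappa(x_i)\}$ to deform coherently along disjoint local sheets of $\kappa$.
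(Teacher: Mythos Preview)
Your proposal is correct. The verification of the structural conditions for the $a$- and $b$-fibrations is essentially identical to the paper's: the $PB^a_{n-1}(\mathcal G)$-space structure on $PB_n(\mathcal G)_0$ is exactly the target map of $PB^a_n(\mathcal G)$, and for $F^b$ the paper also observes that the map to the fiber product is surjective by inserting $1_{x_n}$ and is a submersion because on coordinates it is either a projection or the source map $s$.

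Where you genuinely diverge from the paper is in the first step, showing that the common object map $F^*_0:PB_n(\mathcal G)_0\to PB_{n-1}(\mathcal G)_0$ is a Hurewicz fibration. You reconstruct a local trivialization by hand, lifting the Fadell--Neuwirth ambient isotopy of $|\mathcal G|$ through the covering $\kappa$ via the covering homotopy lifting property, and then argue that the time-$1$ lift is a homeomorphism. This is sound, but the paper replaces the whole construction by a two-line abstract argument: it proves the elementary lemma that if $g:N\to L$ is a covering map and $g\circ f$ is a (Hurewicz) fibration then $f$ is a fibration, and then applies it to the commutative square whose bottom row is the Fadell--Neuwirth fibration $PB_n(|\mathcal G|)\to PB_{n-1}(|\mathcal G|)$ and whose vertical maps are the coverings $\kappa^n$ and $\kappa^{n-1}$. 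The composite $\kappa^{n-1}\circ F^*_0$ equals (covering)$\circ$(Fadell--Neuwirth), hence is a fibration, and the lemma finishes. This bypasses all the continuity and invertibility checks you flag as the ``main technical obstacle''; on the other hand, your direct lift produces an explicit local trivialization of $F^*_0$, which the paper's route does not.
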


\subsection{Counter examples and a short exact sequence}\label{counter}
Recall that an orbifold is called {\it good} if its universal cover is a manifold.
An orbifold $S$ is called a {\it global quotient} if
there is a finite group $H$ acting effectively
on a connected manifold $M$, such that $S=M/H$. That is, $M\to S$ is an
orbifold covering map. For example, any $2$-dimensional good orbifold with finitely generated
orbifold fundamental group is a global quotient ([\cite{Sco83}, p. 426]). Note that, an orbifold
is Hausdorff, and hence if $S$ is a global quotient as above, we can
construct the configuration Lie groupoids of the Lie groupoid ${\cal G}(M,H)$.
See [\cite{Thu}, \S5.2] for more on orbifolds.

The following Proposition shows that the Fadell-Neuwirth fibration
theorem is not extendable to all Hausdorff Lie groupoids. The $c$-groupoids
are the best possible cases after the unit Lie groupoids.
The main obstruction
in these examples is that the quotient map ${\cal G}(M,H)_0\to |{\cal G}(M,H)|$ is not a
genuine covering map.

\begin{prop}\label{mt} Let $S$ be a global quotient orbifold of dimension $\geq 2$, with
  at least one singular point. Let $M$ and $H$ be as defined above. Then, the homomorphism  
    $$F^*:PB^*_n({\cal G}(M, H))\to PB^*_{n-1}({\cal G}(M, H)),$$ is not a $*$-fibration
    of Lie groupoids. In fact, the object level map
    $F^*_0$ is not even a quasifibration. Here, for $*=a,b$, $F^*$
  is defined in Lemmas \ref{pb} and \ref{pb1}, respectively.\end{prop}

We now recall the definition of the homotopy groups of a Lie groupoid.

Given a Lie groupoid $\cal G$, consider the following $n$-times iterated fibered product manifold 
  $${\cal G}_n={\cal G}_1\times_{{\cal G}_0}{\cal G}_1\times_{{\cal
      G}_0}\cdots \times_{{\cal G}_0} {\cal G}_1.$$
  Then ${\cal G}_{\bullet}$ defines a simplicial manifold and its geometric realization is defined as
  the {\it classifying space} $B{\cal G}$ of $\cal G$. Up to weak homotopy equivalence
  this space is unique in the equivalence class (see [\cite{IM},
  \S2.4, \S4.3]) 
  of $\cal G$. The homotopy groups of
  $\cal G$ are then defined as follows. See [\cite{IM}, \S4.3]. $$\pi_k({\cal G}, *):=\pi_k(B{\cal G}, *).$$
  An $a$ or a $b$-fibration between Lie groupoids induces a quasifibration on their
  classifying spaces ([\cite{May72}, Theorem 12.7]). Recall that, a  
{\it quasifibration} is a map which induces a long exact sequence
of homotopy groups, similar to the one induced by a (Serre) fibration, equivalently,
for path connected base, if the homotopy fiber is weak homotopy equivalent to a fiber.
Hence, for c-groupoids, we can calculate the homotopy
groups of the $a$ or the $b$-configuration Lie groupoids inductively using the
long exact sequence of homotopy groups. 

\begin{rem}\label{remm}{\rm Let $f:{\cal H}\to {\cal G}$ be an $a$ or a $b$-fibration
    between Lie groupoids. The {\it fiber}
    over $x\in {\cal G}_0$ is the Lie groupoid $\cal F$ whose object space
    is the manifold ${\cal F}_0=f_0^{-1}(x)$ (since $f_0$ is a submersion), and
    the morphism space is ${\cal F}_1=f_1^{-1}({\cal G}_x)$.
    Since $f_1$ is a submersion, ${\cal F}_1$ is a manifold.

    The long exact sequence  of homotopy groups of an $a$ or a $b$-fibration, 
mentioned above, then 
involves the homotopy groups of this fiber Lie groupoid.}\end{rem}

Next, we give an infinite series of examples of
$2$-dimensional Hausdorff Lie groupoids $\cal G$ which
are not c-groupoids, but still we can deduce a short exact
sequence connecting the fundamental groups 
of $PB^b_n({\cal G})$, $PB^b_{n-1}({\cal G})$, and the fundamental group of
the fiber of the homomorphism $F^b:PB^b_n({\cal G})\to PB^b_{n-1}({\cal G})$.

\begin{exm}\label{mainexm}{\rm We denote by ${\Bbb C}(k,m;q)$ the complex plane with $k$ punctures at
the points $p_1,p_2,\ldots, p_k\in {\Bbb C}$, $m$ marked points (called {\it cone} points)
$x_1,x_2,\ldots, x_m\in {\Bbb C}-\{p_1,p_2,\ldots, p_k\}$, and an integer $q_i>1$ attached
to $x_i$ for $i=1,2,\ldots, m$. $q_i$ is called the {\it order} of the cone point $x_i$. Here
$q$ denote the $m$-tuple $(q_1,q_2,\ldots , q_m)$.
Note that, ${\Bbb C}(k,m;q)$ is a $2$-dimensional orbifold. In the Introduction we
denoted this class of orbifolds by $\cal S$.
We now show that ${\Bbb C}(k,m;q)$ is a good orbifold. Let $B$ be a
    big enough closed disk in $\Bbb C$, which contains the
    punctures and cone points of ${\Bbb C}(k,m;q)$ in its
    interior. Remove small disjoint open disks (contained
    in the interior of $B-\{x_1,x_2,\ldots, x_m\}$)
    around the puncture points, name the resulting space $B'$. Let
    $DB'$ 
be the double of $B'$. Then $DB'$ is a closed and good $2$-dimensional
orbifold. This follows from [\cite{Thu}, p. 237]. Hence $DB'$ has a finite
sheeted orbifold covering which is a manifold. Therefore, we
conclude that ${\Bbb C}(k,m;q)$ also has a finite sheeted orbifold
covering which is a manifold. Hence, there is a $2$-manifold
$S(k,m;q)$ and a finite group 
$H(k,m;q)$ acting effectively on $S(k,m;q)$, with quotient
${\Bbb C}(k,m;q)$. Hence, by Example
\ref{tg}, ${\Bbb C}(k,m;q)$ is realized as the Lie groupoid 
${\cal G}(S(k,m;q),H(k,m;q))$. Note that, the Lie groupoid
${\cal G}(S(k,m;q), H(k,m;q))$ is Hausdorff. Therefore, we can consider the
configuration Lie groupoids of this Lie groupoid.}\end{exm}

Now, we recall some consequences of the Fadell-Neuwirth fibration theorem
for $2$-manifolds, to motivate our next theorem.

Let $N$ be a connected $2$-manifold and $M=N-\{s-\text{points}\ , s\geq 1\}$.
Then, using the long exact sequence of
homotopy groups for the Fadell-Neuwirth fibration for $M$, we get the following 
split short exact sequence ([\cite{FN}, Theorem 3]).

\centerline{
\xymatrix{1\ar[r]&\pi_1(F)\ar[r]&\pi_1(PB_n(M))\ar[r]^{f_*}&\pi_1(PB_{n-1}(M))\ar[r]&1.}}
\noindent
Here $F=M-\{(n-1)-\text{points}\}$. The second surjective homomorphism is induced
by the fibration map $f:PB_n(M)\to PB_{n-1}(M)$. We will give an explicit
pictorial description of
this homomorphism in terms of braids, when $M={\Bbb C}$.

The above exact sequence also gives an interesting, and useful poly-free as well as
`iterated semi-direct product of finitely
generated free groups structure' on the pure braid group $\pi_1(PB_n(M))$. See \cite{CS} for more
on this subject.

Proposition \ref{mt} says that the Lie groupoids  
${\cal G}(S(k,m;q), H(k,m;q))$, for $m\geq 1$, do not
have a Fadell-Neuwirth type fibration. Nevertheless, we
can still prove a short exact sequence similar to the one above,
in the following theorem, directly. This theorem shows that $F^b$ is a
kind of `quasifibration' in low degree.

  \begin{thm}\label{esg} There is a split exact sequence of fundamental groups of
    Lie groupoids as follows.
    
\centerline{
\xymatrix{1\ar[r]&K\ar[r]&\pi_1(PB^b_n({\cal
    X}))\ar[r]^{F^b_*}&\pi_1(PB^b_{n-1}({\cal X}))\ar[r]&1.}}
\noindent
Here, $K$ is isomorphic to $\pi_1({\cal F})$, 
$${\cal X}={\cal G}(S(k,m;q), H(k,m;q))$$ and $${\cal F}={\cal G}(S(k+n-1,m;q), H(k+n-1,m;q)).$$
\end{thm}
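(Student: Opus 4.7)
The plan is to build the sequence by imitating, at the level of orbifold braids, the classical argument for the Fadell--Neuwirth short exact sequence, substituting the Stretching Lemma \ref{stretching} for the fibration property (which fails by Proposition \ref{mt}). The key idea is that Allcock's pictorial braid interpretation, together with the explicit generators of $\pi_1(PB^b_n(\mathcal{X}))$ furnished by Lemma \ref{orbi-gen}, lets us treat $F^b_*$ as the geometric ``forget the $n$-th strand'' operation. A braid on $n$ strands in $\mathbb{C}(k,m;q)$ is sent to a braid on $n-1$ strands by removing the last strand; because this operation preserves the defining relations, it is a well-defined homomorphism, and it clearly agrees on fundamental groups with the homomorphism induced by the projection homomorphism $F^b$ of Lemma \ref{pb1}.

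First I would show surjectivity of $F^b_*$. Lemma \ref{orbi-gen} provides a finite set of braid-type generators of $\pi_1(PB^b_{n-1}(\mathcal{X}))$: the twists around punctures, the loops around cone points (with their torsion relations) and the Artin-type braid generators between consecutive strands. Given any such generator in $\pi_1(PB^b_{n-1}(\mathcal{X}))$, one obtains a preimage in $\pi_1(PB^b_n(\mathcal{X}))$ by adjoining a trivial $n$-th strand located far from the moving part of the diagram; removing this extra strand recovers the original generator, so $F^b_*$ is surjective.

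Next I would identify the kernel. A braid $\beta \in \pi_1(PB^b_n(\mathcal{X}))$ satisfying $F^b_*(\beta)=1$ can, by standard isotopy arguments applied inside $PB^b_{n-1}(\mathcal{X})$, be represented by a braid whose first $n-1$ strands are trivial, so that only the $n$-th strand moves; this is precisely where the Stretching Lemma \ref{stretching} is used, as it allows us to pull the $n$-th strand through while fixing the other strands, turning a braid trivialized downstairs into one whose moving strand traces a loop in the complement of the (now stationary) remaining $n-1$ strands plus the $k$ punctures and $m$ cone points. That complement is precisely the orbifold $\mathbb{C}(k+n-1,m;q)$, whose Lie groupoid realization is $\mathcal{F}=\mathcal{G}(S(k+n-1,m;q),H(k+n-1,m;q))$. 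A careful use of Lemma \ref{orbi-gen} (applied to the fiber) shows every such loop gives a kernel element, and two loops giving the same kernel element are isotopic in the fiber, producing the isomorphism $K\cong \pi_1(\mathcal{F})$.

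Finally, for splitting I would define a homomorphism $\sigma:\pi_1(PB^b_{n-1}(\mathcal{X}))\to \pi_1(PB^b_n(\mathcal{X}))$ by adjoining, to every braid diagram, an extra stationary $n$-th strand placed outside a large disk containing all the action of the original braid. Since the generators and relations in Lemma \ref{orbi-gen} only involve local interactions, this assignment respects all relations, and by construction $F^b_*\circ \sigma = \mathrm{id}$. The main obstacle is the kernel identification: the fibration theorem is unavailable, so I cannot simply quote the long exact sequence, and the stretching argument must be carried out carefully enough to see that no genuinely new relations appear when one trivializes the first $n-1$ strands, which is exactly the content of the Stretching Lemma \ref{stretching} and the main reason for developing it.
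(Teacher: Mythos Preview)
Your overall strategy matches the paper's: translate to Allcock's braid picture, realize $F^b_*$ as ``delete the $n$-th strand'', and define the section by adjoining a trivial strand. The surjectivity and splitting arguments are essentially the paper's.

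Where you diverge is in the kernel identification, and there are two issues.

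First, you invoke the Stretching Lemma directly on an arbitrary kernel element after isotoping its first $n-1$ strands to be straight. But Lemma \ref{stretching} requires hypothesis $\diamond$: the $n$-th strand must be monotone in height. The paper explicitly remarks (immediately after Lemma \ref{stretching}) that it is not known whether every kernel element admits such a representative, and deliberately avoids needing this. Instead the paper shows, via Proposition \ref{gen}, that the subgroup $H=\langle {\cal N}\rangle$ generated by $\{X_{nr},P_{ns},B_{in}\}$ is already \emph{normal}, applying the Stretching Lemma only to the specific conjugates $ZYZ^{-1}$ with $Z\in{\cal P}$, $Y\in{\cal N}$, for which $\diamond$ is verified case by case (Figures 15--18). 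Since deleting the $n$-th strand from a word in the generators of Lemma \ref{orbi-gen} amounts to killing exactly the generators in $\cal N$, the kernel of $\Delta$ is the normal closure of $\cal N$; normality of $H$ then gives $\ker\Delta=H$.

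Second, and more seriously, you assert without argument that ``two loops giving the same kernel element are isotopic in the fiber''. This is precisely the statement that the natural map $\pi_1({\cal F})\to K$ is injective, i.e.\ that no relations hold among $X_{nr},P_{ns},B_{in}$ beyond $X_{nr}^{q_r}=1$. In the classical case this would come from the Fadell--Neuwirth long exact sequence, but that fibration is unavailable here (Proposition \ref{mt}). The paper establishes injectivity by lifting $\cal N$ to the set $\tilde{\cal N}=\{\tilde X_{nr},\tilde P_{ns},\tilde B_{in}\}$ inside the classical pure braid group $\pi_1(PB_{n+k+m}({\Bbb C}))$ (replacing dotted and thick lines by ordinary strands), identifying $\langle\tilde{\cal N}\rangle$ with a subgroup of the free kernel of the classical Fadell--Neuwirth map (Corollary \ref{FNF}), and then arguing that passing back through the surjection ${\cal O}_n$ of Remark \ref{G} introduces only the finite-order relations $X_{nr}^{q_r}$. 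Without this comparison with the classical case your argument cannot rule out unexpected relations in $K$, and the isomorphism $K\cong\pi_1({\cal F})$ remains unproved.
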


\begin{rem}\label{es} {\rm The orbifold fundamental group
    of an orbifold, as defined in [\cite{Thu}, Definition 5.3.5],
    is identified with the fundamental group of an 
    associated orbifold groupoid (see Example \ref{ohg}). We just need
    to note here
    that, ${\cal X}$ and $PB_n^b({\cal X})$ are examples of orbifold
    groupoids. Furthermore, the 
    $b$-configuration Lie groupoid of ${\cal X}$ is the correct model
    of a Lie groupoid inducing the orbifold structure on $PB_n({\Bbb C}(k,m; q))$.
    Hence, $\pi_1(PB^b_n({\cal X}))$ is isomorphic to
    $\pi_1^{orb}(PB_n({\Bbb C}(k,m; q)))$, since $PB_n^b({\cal G}(M,H))={\cal
    G}(PB_n(M), H^n)$. See Example \ref{tg} for notation. 

Hence, the above exact sequence in
  Theorem \ref{esg} is
  equivalent to the following.
  
\centerline{
\xymatrix{1\ar[r]&K\ar[r]&\pi_1^{orb}(PB_n(S))\ar[r]&\pi_1^{orb}(PB_{n-1}(S))\ar[r]&1.}}
\noindent
Here, $K$ is isomorphic to $\pi_1^{orb}(F)$, $S={\Bbb C}(k,m; q)$ and
$F=S-\{(n-1)-\text{regular points}\}$. By {\it regular points}
we mean points which are not singular points in an orbifold, that is, 
in this case these are points in ${\Bbb C}-\{x_1,\ldots,
x_m,p_1,\ldots, p_k\}$.
That is, $F={\Bbb C}(k+n-1,m;q)$. In fact, we
will give the proof of this exact sequence. This exact
sequence can also be obtained for the genus zero $2$-dimensional
orbifold $S$ with countably infinite number of punctures and cone points.
This can be done by writing $S$ as an infinite increasing union of
orbifolds of the form ${\Bbb C}(k,m;q)$, and then taking a direct limit,
since direct limit of a directed system of exact sequences is exact again.}\end{rem}

Consider the free action of the symmetric group $S_n$ on $PB_n({\Bbb C}(k,m; q))$ by permuting
the coordinates. The quotient orbifold is denoted by $B_n({\Bbb C}(k,m; q))=PB_n({\Bbb C}(k,m; q))/S_n$.

\begin{defn}\label{pobg}{\rm  
The group 
$\pi_1^{orb}(PB_n({\Bbb C}(k,m; q)))$ is called the {\it pure orbifold braid group} of the
orbifold ${\Bbb C}(k,m;q)$, and $\pi_1^{orb}(B_n({\Bbb C}(k,m; q)))$ is called
its {\it orbifold braid group}.}\end{defn}

We get the following consequence on the structure
of the pure orbifold braid group, from
Remark \ref{es}.

Before that we recall the following definition.

\begin{defn}\label{VPF} {\rm Let $\cal F$ and $\cal {VF}$ denote the class of free groups
    and virtually-free groups, respectively. Let $\cal C$ be either $\cal F$ or $\cal {VF}$.
    A group $G$ is
    called {\it poly}-$\cal C$, if $G$ 
    admits a normal series $1=G_0\vtr G_1\vtr G_2\vtr \cdots \vtr G_n=G$, such that 
    $G_{i+1}/G_i\in {\cal C}$, for $i=0,1,\ldots, n-1$. The minimum such $n$ is called the
  {\it length} of the poly-$\cal C$ structure. $G$ is called
    {\it virtually poly}-$\cal C$ if $G$ contains a finite index poly-$\cal C$
    subgroup.}\end{defn}

Note that, a subgroup of a poly-$\cal C$ group is poly-$\cal C$ and  
an extension of a poly-$\cal C$ group by a poly-$\cal C$ group is poly-$\cal C$. Also, 
the class of virtually-$\cal F$ groups is closed under taking finite free products.

\begin{cor}\label{vfg} The group 
$\pi_1^{orb}(PB_n({\Bbb C}(k,m; q)))$ has a poly-$\cal {VF}$ structure, consisting of finitely
  presented subgroups in a normal series. Furthermore, it has  
an iterated semi-direct product of virtually finitely generated free group structure.
\end{cor}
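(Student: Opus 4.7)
The plan is to induct on $n$, using the split short exact sequence
$$1\to K\to \pi_1^{orb}(PB_n(S))\to \pi_1^{orb}(PB_{n-1}(S))\to 1$$
provided by Theorem \ref{esg} (as reformulated in Remark \ref{es}), where $S={\Bbb C}(k,m;q)$ and $K\cong \pi_1^{orb}({\Bbb C}(k+n-1,m;q))$. The outcome of the induction will produce both a poly-$\cal{VF}$ normal series with finitely presented terms and an iterated semi-direct product decomposition simultaneously.

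For the base case $n=1$, $PB_1({\Bbb C}(k,m;q))={\Bbb C}(k,m;q)$, and its orbifold fundamental group is obtained from $\pi_1({\Bbb C}-\{p_1,\ldots,p_k,x_1,\ldots,x_m\})=F_{k+m}$ by killing the $q_i$-th power of the meridian around each cone point $x_i$. This gives the free product $F_k*{\Bbb Z}/q_1*\cdots*{\Bbb Z}/q_m$, which is finitely presented and, by Bass--Serre theory (it acts on the associated tree of the free-product decomposition with finite vertex stabilizers), virtually finitely generated free. This also handles the identification of $K$ at every inductive stage, since $K$ has exactly this same form with $k$ replaced by $k+n-1$.

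For the inductive step, assume $\pi_1^{orb}(PB_{n-1}(S))$ already admits a poly-$\cal{VF}$ normal series of finitely presented subgroups and an iterated semi-direct product decomposition by virtually finitely generated free groups. The splitting from Theorem \ref{esg} yields
$$\pi_1^{orb}(PB_n(S))\cong K\rtimes \pi_1^{orb}(PB_{n-1}(S)),$$
and combining with the inductive decomposition of the second factor immediately gives the iterated semi-direct product structure. For the normal series, take $G_1=K$ and, using the splitting to lift the inductive series $1\vtr H_1\vtr\cdots\vtr H_{n-1}=\pi_1^{orb}(PB_{n-1}(S))$, set $G_{i+1}=K\rtimes H_i$. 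Each $G_{i+1}/G_i\cong H_i/H_{i-1}$ is virtually free by hypothesis (and $G_1/G_0=K$ is virtually free by the base case). Each $G_i$ is a semi-direct product of finitely presented groups, hence finitely presented.

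The whole argument rests on Theorem \ref{esg}; beyond invoking it, nothing is delicate except checking that the splittings at successive levels can be assembled into a compatible iterated decomposition, which is immediate since at each stage the quotient embeds back into the total group via its splitting section. The only conceptual hurdle, the failure of the Fadell--Neuwirth fibration theorem in this setting, is precisely what Theorem \ref{esg} bypasses; once we have the split exact sequence in hand, the corollary reduces to standard closure properties of the classes $\cal{VF}$ and \emph{poly-}$\cal{VF}$ under extensions.
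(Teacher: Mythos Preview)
Your proposal is correct and follows essentially the same approach as the paper: both induct on $n$ using the split short exact sequence of Theorem \ref{esg}/Remark \ref{es}, with the base case given by the identification $\pi_1^{orb}({\Bbb C}(k,m;q))\cong F_k*{\Bbb Z}/q_1*\cdots*{\Bbb Z}/q_m$. The paper's proof is a two-line sketch; you have simply filled in the details (the explicit normal series $G_{i+1}=K\rtimes H_i$ via the splitting, the finite presentation of each term, and the Bass--Serre argument for virtual freeness of the base case), none of which represents a different route.
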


\begin{proof} Note that, for all $k,m$ and $q$, $\pi_1^{orb}({\Bbb C}(k,m;q))$
  is isomorphic to the free
  product of the finite cyclic groups of order $q_i$, for $i=1,2,\ldots,
  m$, and a free group on $k$ generators. Hence the Corollary follows
  from Remark \ref{es} and by induction on $n$.\end{proof}

\subsection{Virtual poly-freeness of affine Artin groups}\label{aag}
We first recall some basics related to Artin groups. For details on this
subject see \cite{Hum}, \cite{Bri1} and \cite{Br}.

Let $K=\{s_1,s_2,\ldots,s_k\}$ be a finite set, and $m:K\times K\to
\{1,2,\cdots, \infty\}$ be a 
map with the property that $m(s,s)=1$, and $m(s',s)=m(s,s')\geq 2$ for $s\neq s'$. 
The {\it Coxeter group} associated to the pair $(K,m)$ is by definition the 
following group.
$${\cal W}_{(K,m)}=\langle K\ |\ (ss')^{m(s,s')}=1,\ s,s'\in S\ \text{and}\ m(s,s')<\infty\rangle.$$
A complete 
classification of finite, irreducible Coxeter groups is known (\cite{Cox}). A Coxeter
group is called {\it irreducible} if it is not the direct product of two non-trivial
Coxeter groups. Finite  Coxeter groups are exactly the finite reflection groups.
Also, there are infinite Coxeter groups which are affine reflection groups (\cite{Hum}). 

The {\it Artin group} associated to the Coxeter
group ${\cal W}_{(K, m)}$ is, by definition, 
$${\cal A}_{(K, m)}=\l K\ |\ ss'ss'\cdots = s'ss's\cdots;\ s,s'\in K\r.$$
Here, the number of times the factors in $ss'ss'\cdots$ appear is 
$m(s,s')$; e.g., if $m(s,s')=3$, then the relation is $ss's=s'ss'$. 
${\cal A}_{(K,m)}$ is called the Artin group of type 
${\cal W}_{(K,  m)}$.

A {\it finite type} or an {\it affine type} Artin group is by definition
the Artin group corresponding to a finite or affine type Coxeter group, respectively.
There are {\it complex type} Artin groups also, which are the Artin
groups whose corresponding Coxeter group is generated by reflections
along complex hyperplanes in some complex space. 

It is still an open question if all Artin groups are virtually poly-$\cal F$. See
[\cite{MB}, Question 2].
Among the finite type Artin groups, 
the groups of types $A_n$, $B_n (=C_n)$, $D_n$,
$F_4$, $G_2$ and $I_2(p)$ are already known to be virtually poly-$\cal F$ (\cite{Br}).

Here, we extend this class and prove the following theorem.

\begin{thm}\label{mtpf} Let $\cal A$ be an Artin group of the affine type
  $\tilde A_n$, $\tilde B_n$, $\tilde C_n$, $\tilde D_n$ or of the finite
  complex type $G(de,e,r)$ ($d,r\geq 2$).
  Then, $\cal A$ is virtually poly-$\cal F$.\end{thm}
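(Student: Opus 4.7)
The approach is to realize each Artin group $\cal A$ in the statement as a finite-index subgroup of an orbifold braid group $\pi_1^{orb}(B_n({\Bbb C}(k,m;q)))$ with ${\Bbb C}(k,m;q)\in{\cal S}$, and then to upgrade the poly-$\cal{VF}$ structure of the corresponding pure orbifold braid group (Corollary \ref{vfg}) to a genuine virtually poly-$\cal F$ structure.

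First, Theorem \ref{All} supplies, for each type $\tilde A_{n-1},\tilde B_n,\tilde C_n,\tilde D_n$ and $G(de,e,r)$, an identification of the corresponding Artin group with an orbifold braid group $\pi_1^{orb}(B_n({\Bbb C}(k,m;q)))$ for appropriate parameters: the affine types are realized by configurations of a small number of punctures and order-$2$ cone points in the plane, while the finite complex type $G(de,e,r)$ is realized via $\pi_1^{orb}(B_r({\Bbb C}(0,1;de)))$, the parameter $e$ contributing a further finite-index refinement. Since $S_n$ acts freely by coordinate permutations on $PB_n({\Bbb C}(k,m;q))$ with quotient $B_n({\Bbb C}(k,m;q))$, the pure orbifold braid group $P:=\pi_1^{orb}(PB_n({\Bbb C}(k,m;q)))$ sits in $\pi_1^{orb}(B_n({\Bbb C}(k,m;q)))$ as a normal subgroup of index $n!$, and hence as a finite-index subgroup of $\cal A$.

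By Corollary \ref{vfg}, $P$ admits a poly-$\cal{VF}$ normal series $1=P_0\vtr P_1\vtr \cdots \vtr P_n=P$ with finitely presented terms, together with an iterated semi-direct product decomposition in which each factor is virtually finitely generated free. I would now induct along this tower. Assume inductively that the current normal subgroup $N$ contains a finite-index poly-free subgroup $N_0$, and let $Q$ be the next virtually-f.g.-free factor with a chosen finite-index free subgroup $Q_0\leq Q$. Finite generation of $N$ implies that $N$ has only finitely many subgroups of the index of $N_0$, so the intersection $N_0'$ of the $Q_0$-conjugates of $N_0$ is again of finite index in $N$; a finite-index subgroup $Q_0'\leq Q_0$ then normalizes $N_0'$, producing a finite-index poly-free subgroup $N_0'\rtimes Q_0'$ at this level. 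Iterating up through the tower yields a finite-index poly-free subgroup of $P$, and hence of $\cal A$.

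The main obstacle is exactly this last coordination step: ensuring that the finite-index free subgroups chosen at each level of the semi-direct product can be simultaneously refined so that the entire tower remains a poly-free normal series of finite index in $P$. The finite presentability (hence finite generation) of the terms $P_i$ that is built into Corollary \ref{vfg} is precisely what makes the intersection-of-conjugates argument iterable across all levels; without it, one could not guarantee that the candidate finite-index subgroups at each stage form a finite set, and the induction would break down.
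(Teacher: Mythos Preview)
Your core inductive construction---climbing the iterated semidirect product of $P=\pi_1^{orb}(PB_n({\Bbb C}(k,m;q)))$, intersecting the finitely many $Q_0$-conjugates of a finite-index poly-free $N_0\leq N$ to produce a $Q_0$-invariant finite-index poly-free $N_0'$, and forming $N_0'\rtimes Q_0$---is correct, and it is essentially the mechanism inside the paper's Proposition~\ref{fi}. The paper, however, organizes the argument differently: it first observes (via Theorem~\ref{All} and Corollary~\ref{vfg}) that $\cal A$ itself is poly-$\cal{VF}$ with finitely presented terms, then invokes Theorem~\ref{AO} (the Paolini--Salvetti torsion-freeness of affine Artin groups) so that Proposition~\ref{fi} applies directly to $\cal A$. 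Your route applies the same finite-index machinery to $P$ rather than to $\cal A$, and thereby sidesteps Theorem~\ref{AO} altogether; this is a genuine simplification, since your conclusion that $P$ is virtually poly-$\cal F$ requires only the finite generation of the terms, not torsion-freeness.

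Two points need repair. First, Theorem~\ref{All} as stated in the paper does \emph{not} cover $G(de,e,r)$: its Table lists only $B_n$, $\tilde A_{n-1}$, $\tilde B_n$, $\tilde C_n$, $\tilde D_n$. The paper treats $G(de,e,r)$ separately by embedding it into the finite-type Artin group of type $B_r$ via [\cite{CLL}, Proposition~4.1], so your appeal to Theorem~\ref{All} for this case, and the asserted realization through ${\Bbb C}(0,1;de)$, are not justified here. Second, the claim that $P$ ``sits \ldots\ as a finite-index subgroup of $\cal A$'' is false in general: for type $\tilde A_{n-1}$ the quotient $\pi_1^{orb}(B_n({\Bbb C}(1,0)))/{\cal A}\cong{\Bbb Z}$ cannot factor through $S_n$, so $P\not\leq{\cal A}$. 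What is true, and sufficient for your purposes, is that $P\cap{\cal A}$ has index at most $n!$ in $\cal A$; once you have shown $P$ is virtually poly-$\cal F$, so is its subgroup $P\cap{\cal A}$, and hence so is $\cal A$.
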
 

Recently, it was shown  
in \cite{BMP} that the even Artin groups (that is when $m(s,s')=2$ for all $s\neq s'$)
of $FC$-types (certain amalgamation of finite type even Artin groups) are poly-$\cal F$.
A simple proof of this result of \cite{BMP} is given in \cite{W}.

\subsection{The Farrell-Jones isomorphism conjecture}

Using Corollary \ref{vfg} and a recent result (\cite{BFW}), we prove
the following theorem. Before we give the statement of the result, 
we recall that the Farrell-Jones
isomorphism conjecture is an important conjecture in Geometry and
Topology, and much works have been done in recent times. The conjecture
implies some of the classical conjectures in Topology, like Borel and Novikov
conjectures, and provides a better understanding of the $K$- and
$L$-theory of a group. 

\begin{thm}\label{FJC} The
  Farrell-Jones isomorphism conjecture with coefficients and finite wreath product,
  is true for the orbifold braid group of the
orbifold ${\Bbb C}(k,m; q)$. Consequently, it is true  
for the Artin group of type $\tilde D_n$.\end{thm}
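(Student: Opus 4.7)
The plan is to combine the iterated semi-direct product of virtually finitely generated free groups structure on $\pi_1^{orb}(PB_n(\mathbb{C}(k,m;q)))$ furnished by Corollary \ref{vfg} with the main result of \cite{BFW}. The contribution of \cite{BFW} is the Farrell-Jones isomorphism conjecture with coefficients and finite wreath products for groups built as iterated extensions involving virtually finitely generated free groups; the finite wreath product formulation is precisely what ensures closure of the conjecture under extensions by finite groups and its inheritance between a group and its finite index sub- and super-groups.

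First I would walk up the normal series from Corollary \ref{vfg}: the bottom stage is a virtually finitely generated free group, for which \cite{BFW} already supplies the conjecture, and each successive stage is a semi-direct product whose kernel is again virtually finitely generated free, so \cite{BFW} propagates the conjecture one step up the tower. Induction on the length of the series then yields the conjecture with coefficients and finite wreath products for $\pi_1^{orb}(PB_n(\mathbb{C}(k,m;q)))$ itself.

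Second, the free action of the symmetric group $S_n$ on $PB_n(\mathbb{C}(k,m;q))$ by coordinate permutation exhibits $\pi_1^{orb}(PB_n(\mathbb{C}(k,m;q)))$ as a normal subgroup of index $n!$ in the orbifold braid group $\pi_1^{orb}(B_n(\mathbb{C}(k,m;q)))$. Since the wreath product version of FJC passes from a finite index subgroup to the ambient group, this gives the conjecture for $\pi_1^{orb}(B_n(\mathbb{C}(k,m;q)))$, establishing the first assertion of the theorem.

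For the consequence on the Artin group of type $\tilde D_n$, I would invoke the identification, already used in the proof of Theorem \ref{mtpf}, of this affine Artin group with a finite index subgroup of $\pi_1^{orb}(B_n(\mathbb{C}(k,m;q)))$ for an appropriate choice of $k$, $m$, and $q$. Because the wreath product version of FJC is inherited by subgroups, the conclusion for $\tilde D_n$ follows at once. The main obstacle of the whole argument is verifying, stage by stage in the induction of the first step, that each kernel in the normal series really satisfies the virtually \emph{finitely generated} free hypothesis of \cite{BFW}, as opposed to being merely virtually free of possibly infinite rank; the finite presentation clause built into Corollary \ref{vfg} is the essential ingredient that secures this and makes the recursive application of \cite{BFW} legitimate.
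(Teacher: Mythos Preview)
Your overall strategy---induct up the tower of short exact sequences from Theorem~\ref{esg}, then pass from the pure orbifold braid group to the full one via the finite $S_n$-extension, then down to the $\tilde D_n$ Artin group as a subgroup---is exactly the paper's approach, and your second and third paragraphs are correct as stated.

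There is, however, a genuine gap in the inductive step. The main theorem of \cite{BFW} is the Farrell--Jones conjecture for (finitely generated) \emph{free-by-cyclic} groups, not a closure statement for extensions with virtually free kernel. So the assertion ``each successive stage is a semi-direct product whose kernel is again virtually finitely generated free, so \cite{BFW} propagates the conjecture one step up the tower'' does not follow: nothing in \cite{BFW} says that FJC passes to an extension $1\to K\to G\to H\to 1$ merely because $K$ is virtually finitely generated free and $H$ satisfies FJC. An extra ingredient is required.

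The paper supplies this missing ingredient via a separately known inheritance property of FJC with coefficients and finite wreath products (condition~5 of the class $\mathcal D$ in the proof): if $K$, $H$, and the preimage $p^{-1}(C)$ of every infinite cyclic subgroup $C\leq H$ all satisfy FJC, then so does $G$. This reduces the inductive step to showing that $\pi_1^{orb}(F)\rtimes C$ satisfies FJC for infinite cyclic $C$. One then chooses a finitely generated free \emph{characteristic} subgroup $K\leq \pi_1^{orb}(F)$ of finite index, so that $K\rtimes C$ has finite index in $\pi_1^{orb}(F)\rtimes C$; and it is to the genuinely free-by-cyclic group $K\rtimes C$ that \cite{BFW} applies. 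Without this detour through preimages of cyclic subgroups, the induction does not close.
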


We recall that the case of the Artin group of type $\tilde D_n$ was left open
in \cite{Rou}. See Problem at the end of \cite{Rou}. 

The proof of Theorem \ref{FJC} is short and does not require the exact statement of the 
conjecture, but needs some widely published hereditary properties of
the conjecture,  
and some well-known results in this area. Therefore, we do not state  
the conjecture, and refer the reader to \cite{L} or \cite{Rou1} for more on this subject. 

\section{Some basics}\label{of}
In this section we recall some more basics on Lie groupoids and complete
some checking, left in the last section, including the proofs of Lemmas \ref{pb} and \ref{pb1}.

\subsection{Lie groupoids} For the material recalled in this subsection see
\cite{ALR07} or \cite{IM}.

A {\it groupoid} is a small category $\cal G$ with all morphisms invertible. We denote
the object set by ${\cal G}_0$ and the union of all morphism sets
by ${\cal G}_1:=\cup_{x,y\in {\cal G}_0} mor_{\cal G}(x,y)$. There are
the following structure maps to define a groupoid.

$(ST).$ $s,t:{\cal G}_1\to {\cal G}_0$ are
defined by $s(\alpha)=x, t(\alpha)=y$,  if $\alpha\in mor_{\cal G}(x,y)$.
$\alpha$ is written as $\alpha:x\to y$.
These are respectively called the {\it source} and {\it target} maps.

$(I).$ $i:{\cal G}_1\to {\cal G}_1$ defined by $i(\alpha):=\alpha^{-1}\in mor_{\cal G}(y,x)$  if
$\alpha\in mor_{\cal G}(x,y)$. $i$ is called the {\it inverse} map.

$(M).$ $m:{\cal G}_1\times_{{\cal G}_0} {\cal G}_1\to {\cal G}_1$ is denoted
by $m(\alpha,\beta):=\beta\circ \alpha\in mor_{\cal G}(x,z)$
if $\alpha\in mor_{\cal G}(x,y)$ and $\beta\in mor_{\cal G}(y,z)$. This is called
the {\it multiplication} or {\it composition} map. Here, 
${\cal G}_1\times_{{\cal G}_0} {\cal G}_1=\{(\alpha,\beta)\in {\cal G}_1\times {\cal G}_1\ |\ t(\alpha)=s(\beta)\}$.

$(U).$ $u:{\cal G}_0\to {\cal G}_1$ defined by $u(x)=id_x\in mor_{\cal G}(x,x)$, called the {\it unit} map.

These maps should satisfy the following.

$(C).$ The multiplication is associative, that is, $f\circ(g\circ h)=(f\circ g)\circ h$
whenever they are defined. The unit map is a two-sided unit of the composition, which means 
for all $x,y\in {\cal G}_0$ and $\alpha:x\to y$, $s(u(x))=x=t(u(x))$ and $\alpha\circ (u(x))=\alpha=u(y)\circ \alpha$.
Finally, $\alpha^{-1}$ is a two-sided inverse of $\alpha$. That is $\alpha\circ \alpha^{-1}=u(y)$ and $\alpha^{-1}\circ \alpha=u(x)$.

\begin{defn}{\rm A groupoid 
$\cal G$ is called a {\it Lie groupoid} if ${\cal G}_0$ and ${\cal G}_1$ are smooth manifolds, 
all the structure maps are smooth, and in addition $s$ and $t$ are submersions. The last condition
is necessary to make sure that the fiber product
${\cal G}_1\times_{{\cal G}_0} {\cal G}_1$ is a smooth manifold.}\end{defn}

Now, recall that if a discrete group $H$ acts on a manifold $M$ effectively and properly
discontinuously, then the quotient $M/H$ has an orbifold structure.
In Example \ref{tg} we have seen how to associate a Lie groupoid
to this data, in a more general setting.

\begin{defn}\label{orbgroup}{\rm ([\cite{IM}, \S1.5])
A Lie groupoid is called {\it proper} if the map $(s,t):{\cal G}_1\to
{\cal G}_0\times {\cal G}_0$ is a proper map.  Consequently, for a proper
Lie groupoid ${\cal G}_x$
is a compact Lie group, for all $x\in {\cal G}_0$. $\cal G$ is
called a {\it foliation groupoid}, if ${\cal G}_x$ is
discrete for all $x\in {\cal G}_0$. An {\it orbifold
  groupoid} is by definition a proper foliation groupoid. Hence, 
an orbifold groupoid has finite isotropy groups. For an orbifold groupoid $\cal G$,
$|{\cal G}|$ is the orbifold, on which $\cal G$ is the {\it Lie
  groupoid structure}.}\end{defn}

\begin{rem}{\rm An orbifold groupoid is Hausdorff since it is proper.}\end{rem}

\begin{exm}\label{ohg}{\rm If a discrete group $H$ acts on a manifold $M$ effectively and 
    properly discontinuously, then ${\cal G}(M,H)$ is an orbifold
    groupoid, and gives an orbifold structure on $M/H$.
    Conversely, given an orbifold, there is a
translation Lie groupoid, namely the frame bundle manifold with the
orthogonal group action, which induces the orbifold structure on
the orbifold. Note that, any two orbifold groupoids induce 
equivalent orbifold structure on a space if and only if they are Morita equivalent.
Also Morita equivalent orbifold groupoids have isomorphic homotopy groups. Therefore,
given an orbifold groupoid one can define the {\it orbifold homotopy groups} of the
associated orbifold, as the homotopy groups of the classifying space of the orbifold
groupoid. See \cite{IM}.}\end{exm}

In the language of orbifold groupoids, we can now use the above remarks to 
formalize the main results we have proved.

\begin{thm}\label{formal}  Let $\cal G$ be a connected 
  Hausdorff Lie groupoid.

  (Theorem \ref{fibration}) Assume that
  ${\cal G}_0\to |{\cal G}|$ is a covering map, that is, the orbifold
  $|{\cal G}|$ does not have any singular point, consequently, it is a
  manifold. Then $F^a$($F^b$) is an $a$($b$)-fibration.

  Now let $\cal G$ be an orbifold groupoid.
  
  (Theorem \ref{esg}) Assume that $|{\cal G}|$ is equivalent
  to ${\Bbb C}(k,m;q)$ as an orbifold, then
  there is an exact sequence of fundamental groups as follows.

  \centerline{
    \xymatrix{1\ar[r]&\pi_1({\cal F})\ar[r]&\pi_1(PB_n^b({\cal G}))\ar[r]^{F^b_*}&\pi_1(PB_{n-1}^b({\cal G}))\ar[r]&1.}}

  \noindent
  Here $\cal F$ is a fiber of the homomorphism $F^b$.

  (Proposition \ref{mt}) Assume that 
  ${\cal G}_0\to |{\cal G}|$ is an orbifold covering map,
  $|{\cal G}|$ is a global quotient and has a nonempty singular
  set. Then, we can find another
  orbifold groupoid $\cal H$,  
  Morita equivalent to $\cal G$, such that $F^a$($F^b$) is 
  not an $a$($b$)-fibration for $\cal H$. In fact, the object level map
$F^a_0$($F^b_0)$ for $\cal H$ is not even a quasifibration.\end{thm}

\subsection{$PB_n^a({\cal G})$, $PB_n^b({\cal G})$, $F^a$ and $F^b$}\label{PF}

We have already defined the source ($s_n$) and the target ($t_n$) maps for
$PB_n^a({\cal G})$ in Section \ref{sor},
and observed they are smooth. Now, we define the other structure maps and show that they
are smooth and satisfy the conditions in $\bf C$ of the definition of a groupoid. Recall that
$$PB^a_n({\cal G})_1:=PB^a_{n-1}({\cal G})_1\times_{PB_{n-1}({\cal G})_0}PB_n({\cal G})_0.$$

Since $PB^a_n({\cal G})_1={\cal G}_1$, we again use induction to define the other
maps. So, assume we have defined the inverse, multiplication and the unit maps for
$PB^a_{n-1}({\cal G})_1$ and they are smooth, and satisfies the conditions in $\bf C$.

Let $\alpha_n=(\alpha_{n-1}, x)\in PB^a_n({\cal G})_1$. Define $i(\alpha_n)=(\alpha_{n-1}^{-1}, x)$.
Next, define $u(x)=(id_{(x_1,x_2,\ldots ,x_{n-1})}, x)$ for $x=(x_1,x_2,\ldots ,x_n)\in PB_n({\cal G})_0$.

Recall that $s_n(\alpha_n)=x$ and $t_n(\alpha_n)=(t_{n-1}(\alpha_{n-1}), x_n)$. Let
$\alpha_n'\in PB^a_n({\cal G})_1$, such that $t_n(\alpha_n)=s_n(\alpha_n')$. This
implies $(t_{n-1}(\alpha_{n-1}), x_n)=x'=(x_1',x_2',\ldots ,x_n')$. Therefore,
$t_{n-1}(\alpha_{n-1})=(x_1',x_2',\ldots ,x_{n-1}')=s_{n-1}(\alpha_{n-1}')$. By induction,
we define $\alpha'_n\circ \alpha_n=(\alpha'_{n-1}\circ\alpha_{n-1}, x)$.

Clearly, all the maps defined above are smooth. The checking of the conditions
in $\bf C$ are straight forward.

Now we consider the case $PB_n^b({\cal G})$. Recall that,
$$PB_n^b({\cal G})_1:=(s^n, t^n)^{-1}(PB_n({\cal G})_0\times PB_n({\cal G})_0).$$

The structure maps in this case are easily defined.
Let $\alpha=(\alpha_1,\alpha_2,\ldots , \alpha_n)\in PB_n^b({\cal G})_1$ and
$x=(x_1,x_2,\ldots , x_n)\in PB_n({\cal G})_0$, then, define
the source, target, inverse and unit maps as follows.
$$s(\alpha)=(s(\alpha_1),s(\alpha_2),\ldots , s(\alpha_n)),
t(\alpha)=(t(\alpha_1),t(\alpha_2),\ldots , t(\alpha_n)),$$
$$i(\alpha)=(\alpha_1^{-1},\alpha_2^{-1},\ldots , \alpha_n^{-1}),
u(x)=(id_{x_1},id_{x_2},\ldots , id_{x_n}).$$ If
$\alpha'=(\alpha_1',\alpha_2',\ldots , \alpha_n')\in PB_n^b({\cal G})_1$ with
$t(\alpha)=s(\alpha')$ then $t(\alpha_i)=s(\alpha'_i)$, for all $i=1,2,\ldots, n$ and
hence we can define the multiplication as follows.
$$\alpha'\circ \alpha=(\alpha'_1\circ\alpha_1,\alpha'_2\circ\alpha_2,\ldots , \alpha'_n\circ\alpha_n).$$
Since $s(\alpha'_i\circ\alpha_i)=s(\alpha_i)$ and $t(\alpha'_i\circ\alpha_i)=t(\alpha_i')$, for
all $i=1,2,\ldots, n$, and no two of $s(\alpha_i)$ or of $t(\alpha_i')$ have the same
orbit, $\alpha'\circ\alpha$ is well-defined.

Finally, all of these maps are smooth and satisfy the conditions in $\bf C$.

Next, we prove Lemmas \ref{pb} and \ref{pb1}.

\begin{proof}[Proofs of Lemmas \ref{pb} and \ref{pb1}]
  Recall that we need to proof that $F^a$ and $F^b$ are homomorphisms of
  Lie groupoids. That is, they are smooth functors and commute with the
  source, target, inverse,  unit and multiplication maps of
  the domain and range groupoids. This means the following.

  Let $f:{\cal K}\to {\cal L}$ be a smooth functor between two Lie groupoids. Assume
  $f$ is defined by two
  maps $f_0:{\cal K}_0\to {\cal L}_0$ and $f_1:{\cal K}_1\to {\cal L}_1$ on the
  object and morphism spaces. We denote the structure maps of $\cal K$ and
  $\cal L$ by the same notations.
  $f$ is called a {\it homomorphism} if 
  $f_0$ and $f_1$ commute with the structure maps on the domain and the range
  Lie groupoids. That is, the following are satisfied. $$(a).\ s\circ f_1=f_0\circ s,
  (b).\ t\circ f_1=f_o\circ t, (c).\ f_1\circ u=u\circ f_0,$$
  $$ (d).\ f_1\circ i=i\circ f_1, (e).\ f_1\circ m=m\circ (f_1\times f_1).$$
  Here, $f_1\times f_1$ denotes the induced map
  ${\cal K}_1\times_{{\cal K}_0}{\cal K}_1\to {\cal L}_1\times_{{\cal L}_0}{\cal L}_1$, using $(a)$ and $(b)$.
  
  Clearly, $F^a$ and $F^b$ are smooth functors. Showing the properties
  in the above display, for them are straight forward, nevertheless, we check
  it for $F^a$ and leave the $F^b$ case for the reader.

  We check the equations $(a)$ to $(e)$ for $F^a$.
  Recall that we denoted the source and the target maps of $PB_n^a({\cal G})$ by
  $s_n$ and $t_n$ respectively. Let $n\geq 2$. Let $\alpha_n=(\alpha_{n-1}, x)\in PB^a_n({\cal G})_1$.
  Then $F^a_1(\alpha_n)=\alpha_{n-1}$, $t_n(\alpha_n)=(t_{n-1}(\alpha_{n-1}), x_n)$,
  $s_n(\alpha_n)=x$, $u(x)=id_x=(id_{(x_1,x_2,\ldots ,x_{n-1})}, x)$, $i(\alpha_n)=(\alpha_{n-1}^{-1}, x)$.

  $(a).$ We have $(s_{n-1}\circ F^a_1)(\alpha_n)=s_{n-1}(\alpha_{n-1})=(x_1,x_2,\ldots, x_{n-1})$, and 
$(F^a_0\circ s_n)(\alpha_n)=F^a_0(x)=(x_1,x_2,\ldots, x_{n-1}).$

  $(b).$ Note that, $(t_{n-1}\circ F^a_1)(\alpha_n)=t_{n-1}(\alpha_{n-1})$. On the
  other hand $(F^a_0\circ t_n)(\alpha_n)=F^a_0(t_{n-1}(\alpha_{n-1}), x_n)=t_{n-1}(\alpha_{n-1}).$

  $(c).$ $(F^a_1\circ u)(x)=F^a_1(id_{(x_1,x_2,\ldots ,x_n)})=
  F^a_1(id_{(x_1,x_2,\ldots ,x_{n-1})}, x)=id_{(x_1,x_2,\ldots ,x_{n-1})}$, and
  $(u\circ F^a_0)(x)=u(x_1,x_2,\ldots ,x_{n-1})=id_{(x_1,x_2,\ldots ,x_{n-1})}$.

  $(d).$ $(F^a_1\circ i)(\alpha_n)=F^a_1(\alpha_{n-1}^{-1}, x)=\alpha_{n-1}^{-1}$. Next,
  $(i\circ F^a_1)(\alpha_n)=i(\alpha_{n-1})=\alpha_{n-1}^{-1}$.

  $(e).$ Let $\alpha'_n\in PB^a_n({\cal G})$ such that $t_n(\alpha_n)=s_n(\alpha'_n)$, that is
  $(t_{n-1}(\alpha_{n-1}), x_n)=x'$. Then, $$(m\circ(F^a_1\times F^a_1))(\alpha_n, \alpha'_n)=m(\alpha_{n-1},\alpha_{n-1}')
  =(\alpha'_{n-2}\circ \alpha_{n-2}, (x_1,x_2,\ldots ,x_{n-1}))$$
  and $$(F^a_1\circ m)(\alpha_n, \alpha'_n)=F^a_1(\alpha'_{n-1}\circ \alpha_{n-1}, x)=\alpha'_{n-1}\circ\alpha_{n-1}$$$$=
  (\alpha'_{n-2}\circ \alpha_{n-2}, (x_1,x_2,\ldots ,x_{n-1})).$$

  This completes the proof that $F^a$ is a homomorphism, that is, the proof of Lemma \ref{pb} is complete.
  As we mentioned before that the proof of Lemma \ref{pb1} is similar.\end{proof}

\begin{exm}{\rm If $\cal G$ is an orbifold groupoid, then the configuration
    Lie groupoids $PB^a_n({\cal G})$ and $PB^b_n({\cal G})$ are also orbifold groupoids.
    But for the same orbifold groupoid $\cal G$, the above two configuration Lie groupoids, although
    have the same object space, they define orbifold groupoid structures on different
    orbifolds. $|PB^a_n({\cal G})|$ is a larger space than $|PB^b_n({\cal G})|$. In fact,
    there is a homomorphism $PB^a_n({\cal G})\to PB^b_n({\cal G})$ which is identity on the
    object space and sends $(\alpha, (x_1,x_2),\ldots, (x_1,x_2,\ldots, x_n))\in PB_n^a({\cal G})_1$ to
    $(\alpha,id_{x_2},id_{x_3},\ldots, id_{x_n})\in PB_n^b({\cal G})_1$.}\end{exm}

\section{Orbifold braid groups} \label{obg}
In this section we give a short introduction to orbifold braid groups.
We also use a stretching technique and prove few basic results
on the orbifold braid group of the
orbifold ${\Bbb C}(k,m; q)$,   
which are needed to prove Theorem \ref{esg}.

We have already defined the pure orbifold braid group of $n$ strings
of ${\Bbb C}(k,m; q)$, as the orbifold fundamental group of the configuration
orbifold (Definition \ref{pobg}). Since the underlying
space of ${\Bbb C}(k,m;q)$ is an open subset of $\Bbb C$, there is one more 
way one can define the (pure) orbifold braid group, which give the same result (see [\cite{All}, p. 3]).
It is the pictorial way as in the classical
braid group case.

The later pictorial definition is relevant for us. We describe it now
from \cite{All}.

\medskip

\centerline{\includegraphics[height=4cm,width=8cm,keepaspectratio]{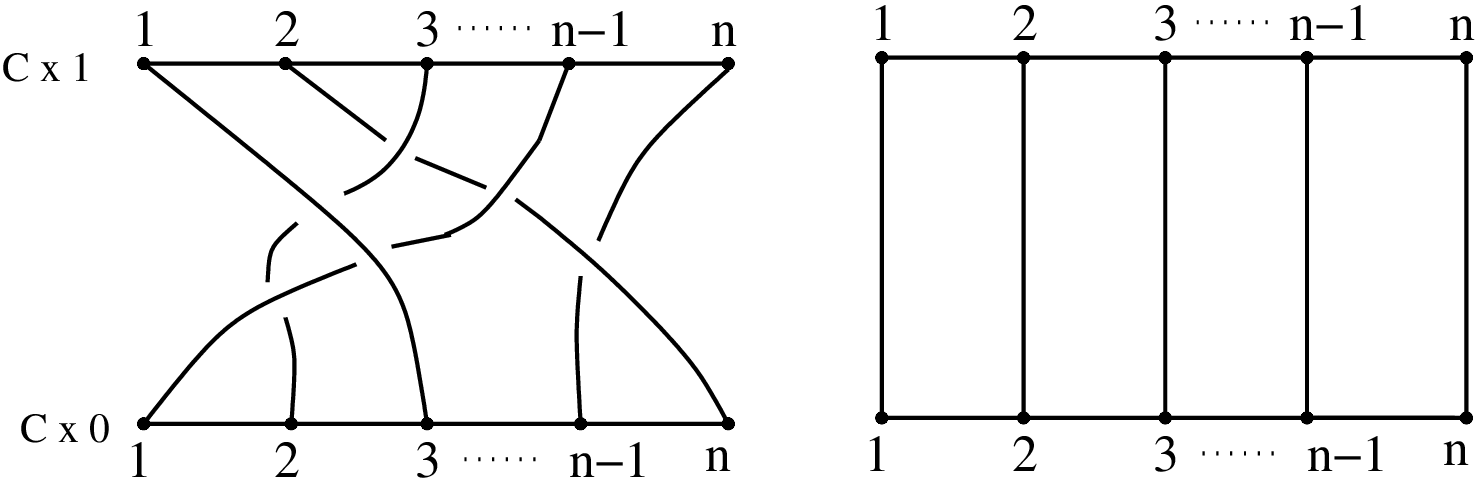}}

\centerline{Figure 1: A typical braid and the identity element.}

\medskip

Recall that any element of the classical braid group 
$\pi_1(B_n({\Bbb C}))$ 
is identified with an equivalence class of a  
braid. An example of a braid is given in the first picture of Figure 1. Two braids are
called {\it equivalent}, if one can be obtained from the other by moving the
strings, fixing the end points, such that, in the process no two strings touch or
cross each other. Juxtaposing one braid over another gives 
the group operation. And, the identity element is the braid which 
joins the vertex $j$ to $j$, for $j=1,2,\ldots, n$, and no two strings entangle with each other, as in
the second picture of Figure 1. See \cite{Ar}.

Consider the complex plane with only one cone 
point $x$, that is ${\Bbb C}(0,1;q)$. The underlying topological space of
${\Bbb C}(0,1;q)$ is nothing but the complex plane. 
Therefore, $B_n({\Bbb C}(0,1;q))$ is an orbifold when we consider the orbifold structure 
of ${\Bbb C}(0,1;q)$, otherwise it is the classical braid space
$B_n({\Bbb C})$. 

\medskip

\centerline{\includegraphics[height=4cm,width=8cm,keepaspectratio]{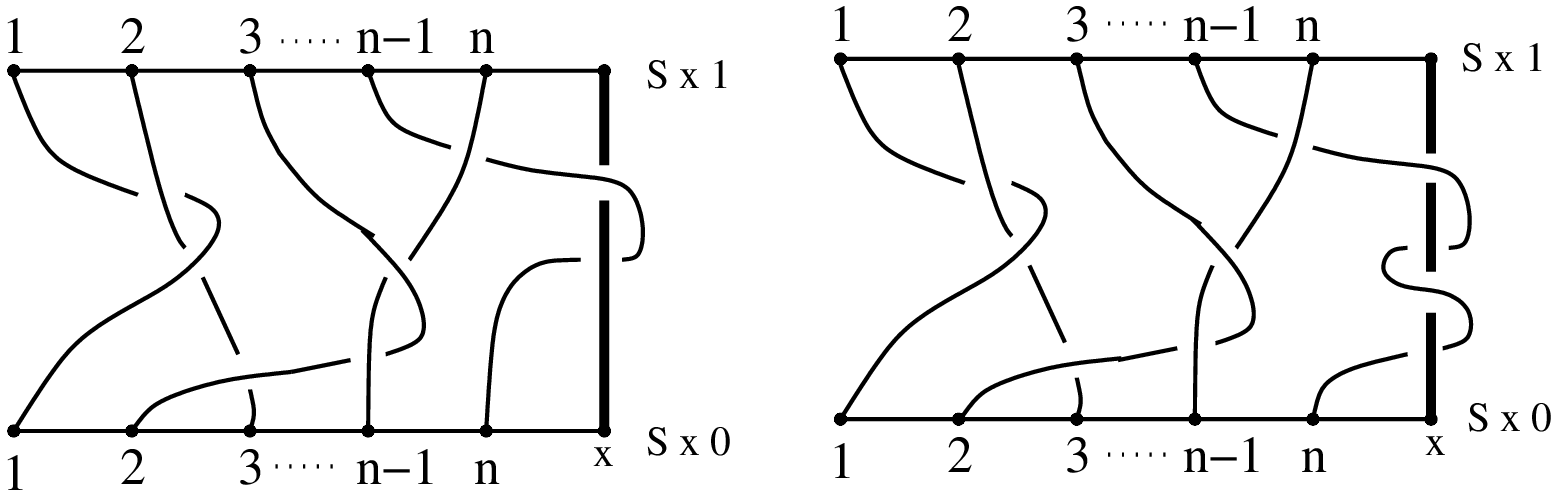}}

\centerline{Figure 2: Orbifold braid.}

\medskip

Therefore, although the fundamental group 
of the underlying topological space of $B_n({\Bbb C}(0,1;q))$ has the classical 
braid representation as above, the orbifold fundamental group 
of $B_n({\Bbb C}(0,1;q))$ needs a different treatment.
We point out here a similar braid representation 
of the orbifold fundamental group of $B_n({\Bbb C}(0,1;q))$
from \cite{All}. The pictures in Figure 2 above   
shows the case of one cone point $x$. The thick line 
represents $x\times I$. Here $S={\Bbb C}(0,1;q)$.

Note that both the braids in Figure 2 represent the same element in
$\pi_1(B_n({\Bbb C}))$ (in this case there is no thick line), but 
different in $\pi_1^{orb}(B_n({\Bbb C}(0,1;q)))$, depending on the order of the cone point.
We describe it below.

One has to define new relations among braids, 
respecting the cone points of the orbifold. We produce one situation 
to see how this is done. 
The second picture in Figure 3 represents part of a typical element.

\medskip

\centerline{\includegraphics[height=9cm,width=11cm,keepaspectratio]{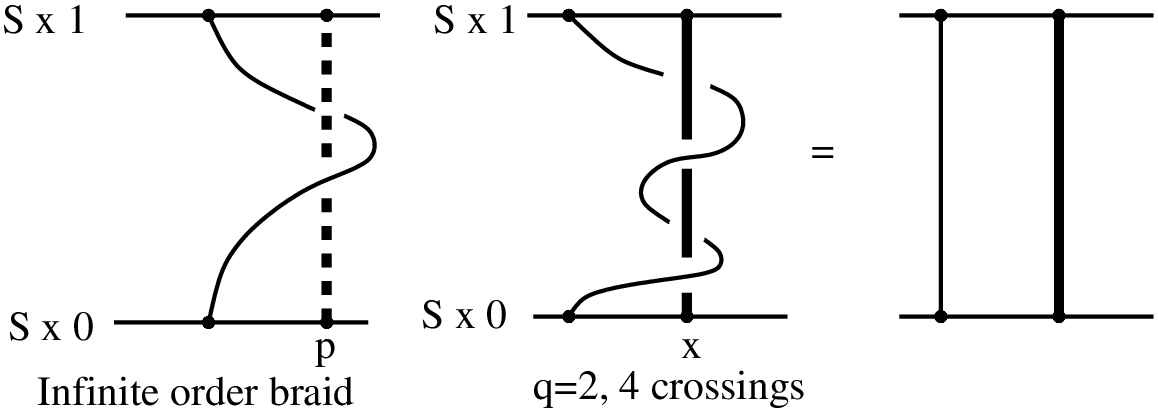}}

\centerline{Figure 3: Movement around a puncture and a cone point.}

\medskip

Now, if a string in the braid wraps the thick line $x\times I$, $q$ times (that is, $2q$ crossings), then   
it is equal to the third picture. This is because, if a loop circles 
$q$ times around the cone point $x$, then the loop gives the 
trivial element in the orbifold fundamental group of ${\Bbb C}(0,1; q)$. Therefore, both braids represent  
the same element in the orbifold fundamental group of $B_n({\Bbb C}(0,1; q))$. Furthermore, if the 
string wraps the thick line, not in a multiple of $q$ number of times, then it is 
not equal to the unwrapped braid. For more details 
see \cite{All}.

Let us now consider the orbifold ${\Bbb C}(1,0;q)$, that is the punctured complex plane.
When there is a puncture $p$, then the
braids will have to satisfy a similar property, but in this case if
any of the string wraps $p\times I$ at least once, then the braid will have infinite order. See
the first picture in Figure 3. One can also think of $p\times I$ as a
fixed string (see Remark \ref{p}).

We now consider the general case. Recall that ${\Bbb C}(k,m;q)$ is
the orbifold, whose underlying space is
${\Bbb C}-\{p_1,p_2,\ldots, p_k\}$, with cone points at $x_i\in {\Bbb C}-\{p_1,p_2,\ldots, p_k\}$ of
order $q_i$ for $i=1,2,\ldots, m$. Then, we have the following exact sequence, since the
quotient map $PB_n({\Bbb C}(k,m;q))\to B_n({\Bbb C}(k,m;q))$ is an orbifold covering map, with
$S_n$ as the group of covering transformation. 

\centerline{
\xymatrix{1\ar[r]&\pi_1^{orb}(PB_n({\Bbb C}(k,m;q)))\ar[r]&\pi_1^{orb}(B_n({\Bbb C}(k,m;q)))\ar[r]&S_n\ar[r]&1.}}
\noindent
Therefore, the elements of $\pi^{orb}_1(PB_n({\Bbb C}(k,m:q)))$ are braids, where the
strings join $j$ to $j$ for $j=1,2,\ldots,n$.

A typical
element of $\pi_1^{orb}(PB_n({\Bbb C}(k,m;q)))$ looks like $A$ as in
the following figure. Here, note that we have placed the punctures and cone points
conveniently on the right hand side. This does not change the group, because the
topology of the orbifold ${\Bbb C}(k,m;q)$ is independent of the position of the
punctures or the cone points. The group operation is again given by juxtaposing
one orbifold braid onto another, and the identity element is also obvious.

\medskip

\centerline{\includegraphics[height=4cm,width=8cm,keepaspectratio]{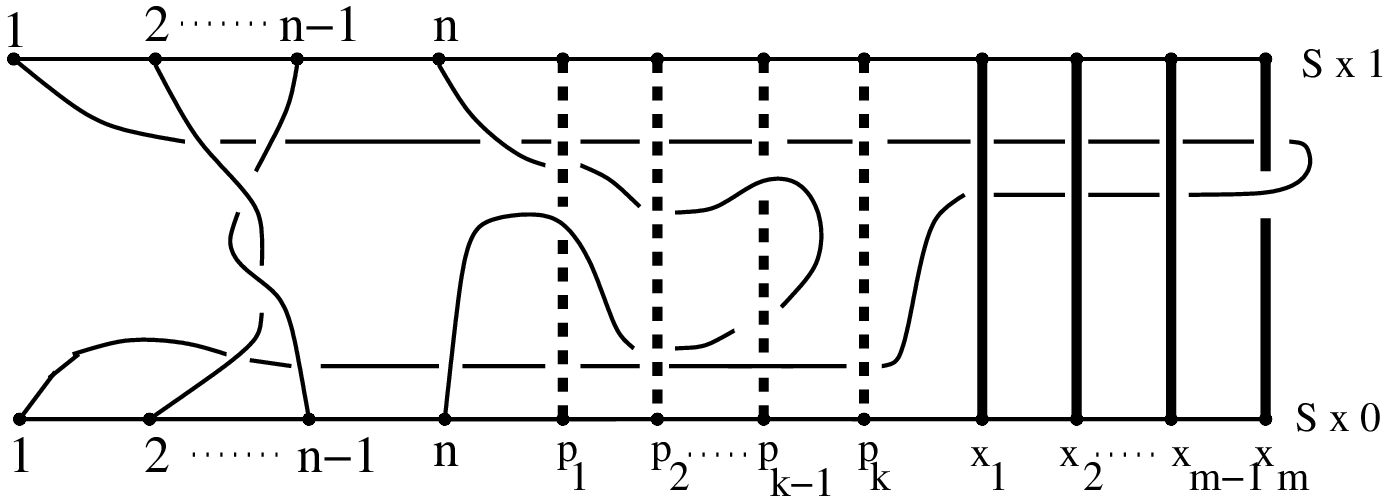}}

\centerline{Figure 4: An element $A$ in $\pi_1^{orb}(PB_n({\Bbb C}(k,m; q)))$.}

\medskip

Now, we describe a set of generators for
$\pi_1^{orb}(PB_n({\Bbb C}(k,m; q)))$.

Recall from \cite{Ar}, that $\pi_1(B_n({\Bbb C}))$
is generated by $\sigma_i$, $i=1,2,\ldots, n-1$, and 
$\pi_1(PB_n({\Bbb C}))$ 
is generated by the braids $B^{(n)}_{ij}$, $i<j$ as shown below. Here,
the string from $i$ to $i$ is going below all the strings up to the
string $j-1$ to $j-1$.

\medskip

\centerline{\includegraphics[height=3cm,width=6cm,keepaspectratio]{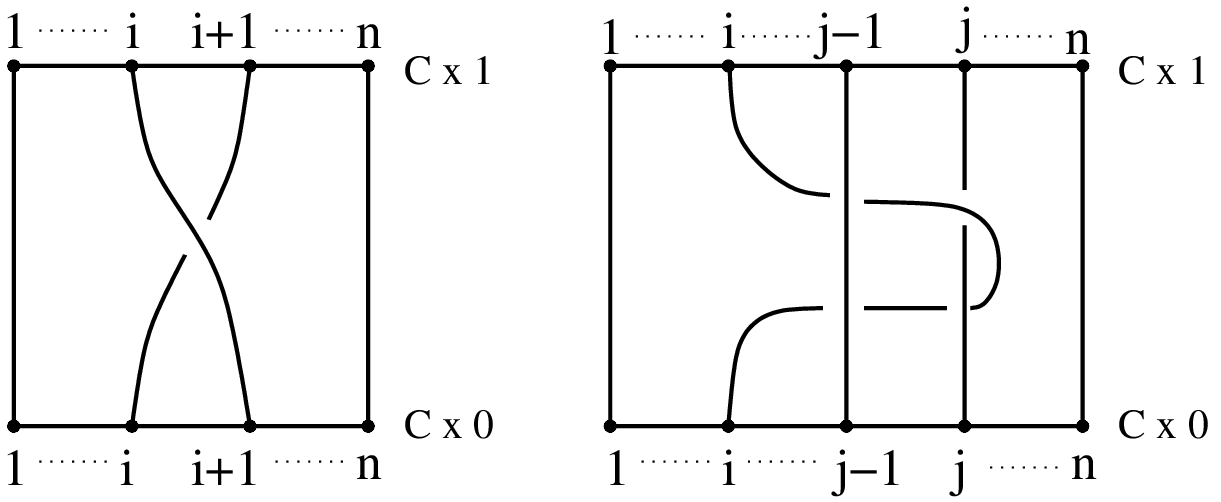}}

\centerline{Figure 5: The generator $\sigma_i$ of $\pi_1(B_n({\Bbb C}))$
  and $B^{(n)}_{ij}$ of $\pi_1(PB_n({\Bbb C}))$.}

\medskip

A quick drawing shows the following.
$$B^{(n)}_{ij}=\sigma_{j-1}\sigma_{j-2}\cdots \sigma_{i+1}\sigma_i^2\sigma_{i+1}^{-1}\cdots \sigma_{j-2}^{-1}\sigma_{j-1}^{-1}.$$

\begin{lemma}\label{orbi-gen} A set of generators for $\pi_1^{orb}(PB_n({\Bbb C}(k,m; q)))$ is given in
Figures 6,7 and 8. 

\medskip

\centerline{\includegraphics[height=2.5cm,width=5cm,keepaspectratio]{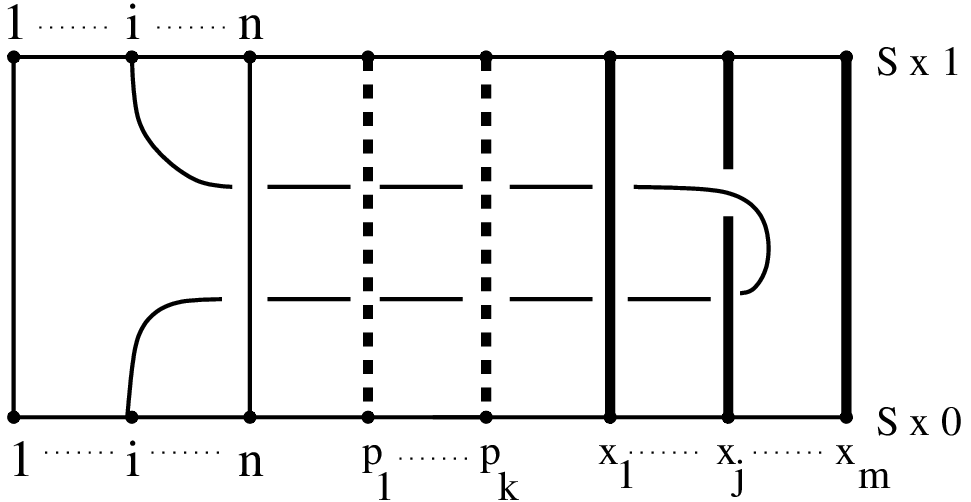}
  \includegraphics[height=2.5cm,width=5cm,keepaspectratio]{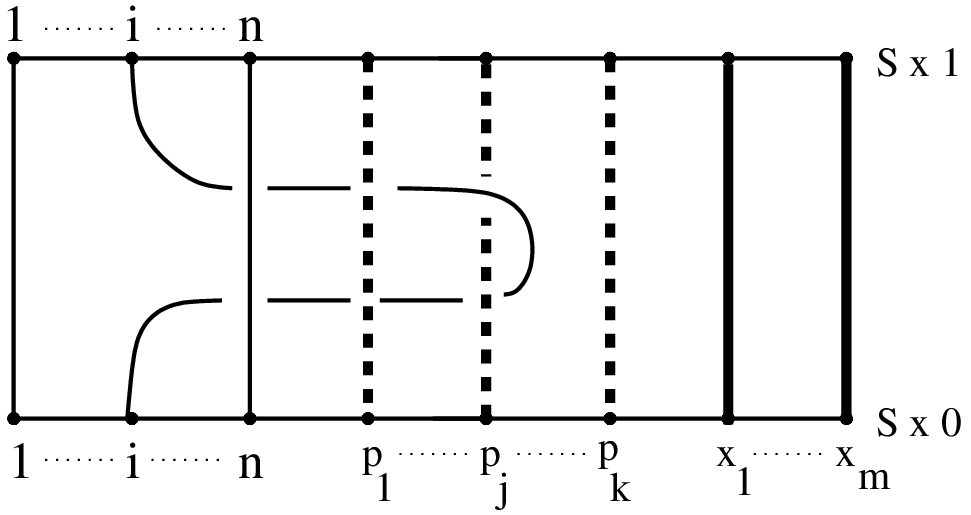}}

\centerline{Figure 6: The generator $X_{ij}$\ \ \ \ \ \ Figure 7: The generator $P_{ij}$.}

\medskip

\centerline{\includegraphics[height=5cm,width=10cm,keepaspectratio]{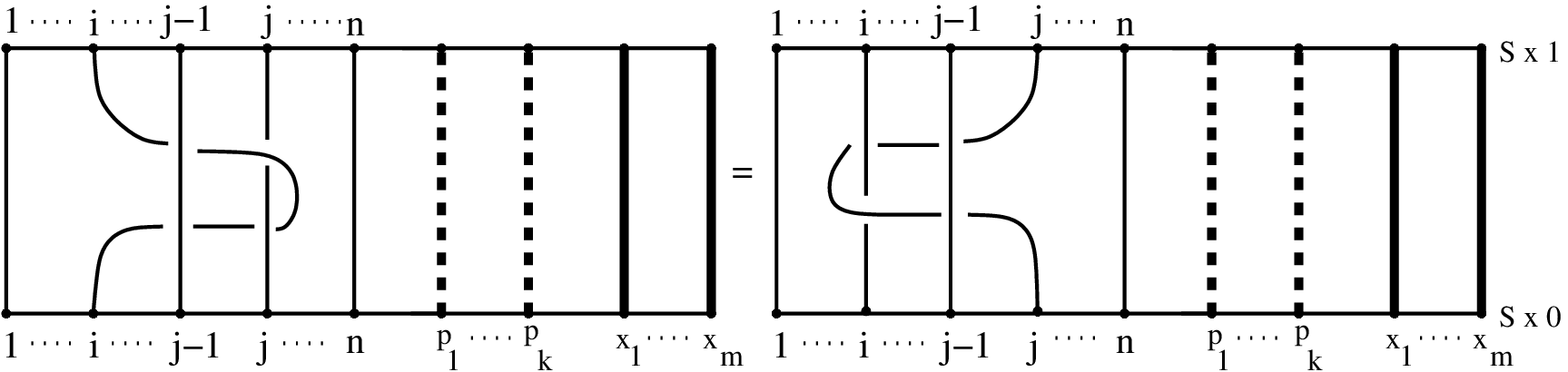}}

\centerline{Figure 8: The generator $B_{ij}$.}\end{lemma}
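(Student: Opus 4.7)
The plan is to argue by induction on the number of strings $n$, using the pictorial braid description of $\pi_1^{orb}(PB_n({\Bbb C}(k,m;q)))$ from \cite{All}, Artin's classical generation of $\pi_1(PB_n({\Bbb C}))$ by the $B^{(n)}_{ij}$, and the Stretching Lemma \ref{stretching}.

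For the base case $n=1$, we have $PB_1({\Bbb C}(k,m;q))={\Bbb C}(k,m;q)$, and a Van Kampen computation (cf.\ \cite{Thu}) identifies its orbifold fundamental group with the free product $F_k * ({\Bbb Z}/q_1) * \cdots * ({\Bbb Z}/q_m)$, whose natural generators are precisely the loops $P_{1,j}$ around each puncture $p_j$ and the loops $X_{1,j}$ around each cone point $x_j$. (No $B_{ij}$'s are needed since these generators require $i<j$.)

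For the inductive step, fix a pure orbifold braid $A$. Follow the $n$-th string of $A$ through time $t\in[0,1]$: by definition of $PB_n$, at every $t$ this string avoids the $k$ punctures, the $m$ cone points, and the positions of the other $n-1$ strings of $A$. Thus the $n$-th string determines an orbifold loop in ${\Bbb C}(k+n-1,m;q)$, where the transverse positions of the first $n-1$ strings contribute the additional $n-1$ ``punctures.'' By the already established $n=1$ case applied to ${\Bbb C}(k+n-1,m;q)$, this loop is a word in loops around each puncture and each cone point of that slice; these loops correspond, as pictorial braids, exactly to the generators $P_{i,n}$ (for $i=1,\ldots,k$), $B_{j,n}$ (for $j=1,\ldots,n-1$), and $X_{i,n}$ (for $i=1,\ldots,m$) of Figures 6--8. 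Applying the Stretching Lemma \ref{stretching} to the $n$-th string, we may rewrite $A$ as a product of such generators followed by a pure orbifold braid $A'$ in which the $n$-th string is the trivial straight line. Since $A'$ is effectively a pure orbifold braid on $n-1$ strings, the inductive hypothesis expresses it as a word in the $X_{ij},P_{ij},B_{ij}$ with indices at most $n-1$, completing the induction.

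The main obstacle is the geometric content of the stretching step: one must certify that pulling the $n$-th string to the standard straight position is an orbifold isotopy which, at each encounter with a cone point, puncture, or other string, deposits exactly one of the pictorial generators (possibly conjugated by previously recorded $B_{i,n}$'s). In particular, the ordering of the generators in the resulting word depends on the order in which the $n$-th string crosses the relevant vertical arcs, and one needs the classical formula $B^{(n)}_{ij}=\sigma_{j-1}\cdots\sigma_{i+1}\sigma_i^2\sigma_{i+1}^{-1}\cdots\sigma_{j-1}^{-1}$ recalled above to match the pictorial $B_{j,n}$ with the word produced by the loop around the $j$-th string position. Once these identifications are verified, the lemma follows.
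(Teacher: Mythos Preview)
Your inductive approach is genuinely different from the paper's, but there is a real gap at the key step. The Stretching Lemma~\ref{stretching} carries the hypothesis~$\diamond$: the first $n-1$ strings must already be vertically straight, with only the string $N$ from $n$ to $n$ entangled. An arbitrary pure orbifold braid $A$ does not satisfy this, so you cannot ``apply the Stretching Lemma to the $n$-th string'' of $A$ as written. Relatedly, your claim that the $n$-th string of $A$ ``determines an orbifold loop in ${\Bbb C}(k+n-1,m;q)$'' treats the other $n-1$ strings as fixed punctures, but in $A$ they are moving; there is no single punctured orbifold in which that string lives until you have straightened the others. What you actually need is that $A(A')^{-1}$, with $A'$ your split lift, can be represented by a braid satisfying~$\diamond$. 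The paper explicitly flags (in the remark immediately following Lemma~\ref{stretching}) that it is not known whether \emph{every} element of $\ker\Delta$ admits such a representative, so this is not a detail you can pass over.

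There is a fix, but it essentially collapses to the paper's argument. Choose a \emph{classical} braid diagram for $A$ in ${\Bbb C}$ minus $k+m$ points (treating cone points as ordinary punctures) and form $A'$ with the identical first $n-1$ strings and a straight $n$-th string on top. Then the first $n-1$ strings of $A(A')^{-1}$ are \emph{classically} trivial, so ordinary ambient-isotopy combing (i.e.\ the genuine Fadell--Neuwirth fibration for the punctured plane) straightens them and~$\diamond$ holds. But once you have passed to the classical pure braid group of the $(k+m)$-punctured plane, you no longer need the induction or the Stretching Lemma at all: this is precisely the paper's proof, which embeds the problem in $\pi_1(PB_{n+k+m}({\Bbb C}))$, takes the subgroup $G$ generated by those $B^{(n+k+m)}_{ij}$ with $i\le n$ (so the last $k+m$ strings stay straight), identifies $G\simeq\pi_1(PB_n({\Bbb C}(k+m,0;q)))$, and then surjects onto $\pi_1^{orb}(PB_n({\Bbb C}(k,m;q)))$ by converting the last $m$ punctures to cone points. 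The paper's route is shorter, uses only Artin's classical generators, and leaves the Stretching Lemma free to do its real work later, in Proposition~\ref{gen}, where~$\diamond$ is checked case by case on explicit conjugates rather than assumed for arbitrary kernel elements.
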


\begin{proof} Consider the classical pure braid group
  $\pi_1(PB_{n+k+m}({\Bbb C}))$ and its generators
  $B^{(n+k+m)}_{ij}$, $i=1,2,\cdots, n+k+m-1; j=2,\cdots, n+k+m,
  i<j$, as recalled above. Let $G$ be the subgroup of $\pi_1(PB_{n+k+m}({\Bbb C}))$
  generated by $B^{(n+k+m)}_{ij}$, $i=1,2,\cdots, n; j=2,\cdots,
  n+k+m; i<j$. Clearly, any braid $B$ representing an element of $G$ is equivalent to
  a braid, whose 
  all the last $k+m$ strings are vertically straight and not entangling
  with each other.
  
  Now, we replace these last $k+m$ straight strings by dotted
  lines and denote the corresponding braid by $\bar B$. That is, we 
  introduced $k+m$ punctures in $\Bbb C$. Then, clearly we have an isomorphism from $G$
  onto $\pi_1(PB_n({\Bbb C}(k+m,0;q)))$, sending $B$ to $\bar B$ (see [\cite{G-M}, page 26]).
  Since, the compositions in
  $G$ and in $\pi_1(PB_n({\Bbb C}(k+m,0;q)))$ are same, that is,
  juxtaposition of braids. Furthermore, 
  considering the group operations in $G$ and in $\pi_1(PB_n({\Bbb C}(k+m,0;q)))$,
  there is no difference between a dotted line and a straight
  string.

  Hence, the
  relations in a presentation of $G$ with respect to the above set of
  generators, and the relations in a
  presentation of $\pi_1(PB_n({\Bbb C}(k+m,0;q)))$, in terms of the generators 
$\bar B^{(n+k+m)}_{ij}$, $i=1,2,\cdots, n; j=2,\cdots, n+k+m; i<j$ are
identical, except with a bar.

Next, we replace the last $m$ dotted lines in $\bar B$, by thick
lines, and denote it by $\bar {\bar B}$. That is, we filled the
last $m$ punctures by orbifold points of order
$q_i, i=1,2,\cdots, m$. Then, the element 
$\bar{\bar B}^{(n+k+m)}_{ij}, j=n+k+1,\cdots ,n+k+m$ has order
$q_{j-n-k}$, as we described the orbifold braid groups before, from \cite{All}. Hence, we
get back the group $\pi_1(PB_n({\Bbb C}(k,m;q)))$, and its generators
are as described in the statement of the Lemma. Since, clearly any
element of $\pi_1(PB_n({\Bbb C}(k,m;q)))$ is of the form $\bar{\bar
  B}$, for some $B\in G$.  \end{proof}

\begin{rem}\label{G}{\rm We denote by $${\cal O}_n:G\simeq \pi_1(PB_n({\Bbb C}(k+m,0;q)))\to
    \pi_1^{orb}(PB_n({\Bbb C}(k,m; q))),$$ the surjective
    homomorphism sending $B$ to $\bar{\bar {B}}$.}\end{rem}

\begin{rem}\label{p}{\rm From the above proof we also get that the pure orbifold braid
    group of $n$ strings of ${\Bbb C}(k,m;q)$, can be
    embedded into the pure orbifold braid group of $n+k$
    strings of ${\Bbb C}(0,m;q)$. For $k=m=1$, this embedding was proved in
    [\cite{Rou}, Proposition 4.1].}\end{rem}

Now we come to the crucial lemma, which is the main
ingredient for this paper.

\begin{lemma}\label{stretching} Let $A\in \pi_1^{orb}(PB_n({\Bbb C}(k,m;
  q)))$ satisfying the following property.

  $\diamond$ $A$ is equivalent to a braid which has all the first $n-1$
  strings not entangling with each other, they are vertically straight
  and only the string from $n$ to $n$ (say $N$)
is entangling with some (or all) of the first $n-1$ strings or the dotted or
the thick lines.

Then, $A$ is equivalent to a juxtaposition of the following braids or
their inverses.

$$X_{nr}, r=1,2,\ldots ,m;\ P_{ns}, s=1,2,\ldots , k;\ B_{in},
i=1,2,\ldots , n-1.$$
\end{lemma}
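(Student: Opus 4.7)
The plan is to read off the word expressing $A$ directly from the geometry of the string $N$, using the monotone height hypothesis to slice the braid into elementary pieces. Each piece will correspond to a single ``wrap'' of $N$ around exactly one of the vertical obstacles (one of the $n-1$ straight strings, one of the $k$ dotted lines, or one of the $m$ thick lines), which is precisely one of the claimed generators or its inverse.

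First, I would parameterize $N$ by height. Writing $S={\Bbb C}(k,m;q)$, view the braid inside $S\times [0,1]$. By the monotonicity assumption, $N$ is the graph of a continuous map $\gamma:[0,1]\to S$ (taken in the downward direction), with $\gamma(0)=\gamma(1)=p_n$, the horizontal location of the $n$-th string. The remaining obstacles project onto $n-1+k+m$ distinct fixed points of $S$: the base points $p_1,\ldots,p_{n-1}$ of the first $n-1$ straight strings, the punctures $p'_1,\ldots,p'_k$, and the cone points $x_1,\ldots,x_m$. Next, I would choose pairwise disjoint small open disks $D_1,D_2,\ldots,D_{n-1+k+m}$ around each of these marked points, all avoiding $p_n$. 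By continuity of $\gamma$ and compactness of $[0,1]$, there is a finite subdivision $0=t_0<t_1<\cdots<t_M=1$ such that for each subinterval $[t_{l-1},t_l]$ either $\gamma([t_{l-1},t_l])$ lies entirely in one disk $D_j$, or entirely outside $\bigcup_j D_j$. After a further refinement, one may arrange that each ``inside'' subinterval enters and exits its disk exactly once, with $\gamma(t_{l-1})$ and $\gamma(t_l)$ on the boundary circle of $D_j$.

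Slicing the braid at the heights $1-t_l$ factors $A$ as $A_M A_{M-1}\cdots A_1$, where each $A_l$ is again a braid of the same form (only $N$ moves, the other strings are vertical, and $N$ is monotone in height). For each $A_l$ whose piece of $\gamma$ lies outside all disks, a straight-line homotopy in the disk complement straightens $N$ to a vertical segment, so $A_l$ is trivial and can be discarded. For each $A_l$ whose piece lies inside some disk $D_j$, closing the arc $\gamma|_{[t_{l-1},t_l]}$ along $\partial D_j$ gives a loop around the unique marked point of $D_j$; by choosing the disks small enough, this loop is homotopic to the $\pm 1$ power of the elementary boundary loop. The monotone height assumption then resolves the over/under crossing data at this wrap in the canonical way pictured in Figures 6, 7 and 8, and the resulting braid $A_l$ is $X_{nr}^{\pm 1}$, $P_{ns}^{\pm 1}$, or $B_{in}^{\pm 1}$ according as $D_j$ surrounds a cone point $x_r$, a puncture $p'_s$, or a straight string $p_i$. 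Composing these factors in the order dictated by $l$ gives the required expression for $A$.

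The main obstacle will lie in the last step: checking that each ``single-wrap'' piece $A_l$ equals exactly the generator (rather than some conjugate or an extraneous factor). This is where the monotonicity of the height is essential. Without it, a wrap of $N$ around an obstacle could produce either choice of over/under crossings, yielding expressions such as $\sigma^{\pm 1}B_{in}\sigma^{\mp 1}$; monotonicity forces a unique crossing pattern at every interaction, matching exactly the pictures in Figures 6, 7, and 8. The orbifold-braid relation that wrapping $q_r$ times around a cone point of order $q_r$ is trivial is never needed in the argument, since the decomposition only records the loops as words, not reduced modulo orbifold relations.
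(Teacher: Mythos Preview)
Your overall idea---use the monotone height to view $N$ as a based loop $\gamma$ in the fiber $F=S\setminus\{p_1,\ldots,p_{n-1}\}$ and then decompose $\gamma$ into elementary loops---is sound, but the execution via the disk decomposition has a genuine gap.

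The step that fails is the claim that each ``outside'' slice $A_l$ is trivial. The complement of $\bigcup_j D_j$ in $S$ is not simply connected (it is homotopy equivalent to a wedge of $n-1+k+m$ circles), so a straight-line homotopy need not stay inside it, and a path there can carry nontrivial homotopy. Concretely: take $n=2$ with the single vertical string at $p_1$, and let $\gamma$ be a loop at $p_2$ encircling $p_1$ once while staying outside your small disk $D_1$. Your subdivision then has a single outside piece, which you would declare trivial; but the braid is $B_{12}\neq 1$. More generally, slicing at heights $1-t_l$ does not produce elements of the braid \emph{group}: the $n$-th endpoint of $A_l$ sits at $\gamma(t_l)\neq p_n$, so the factors are tangles with mismatched boundaries, and ``discarding'' the outside ones while ``closing'' the inside ones along $\partial D_j$ does not recover $\gamma$ up to homotopy.

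The standard repair is to fix, for every intermediate point $\gamma(t_l)$, an auxiliary path $\beta_l$ from $p_n$ to $\gamma(t_l)$ in $F$, and to rewrite $\gamma$ as the product of the based loops $\beta_l^{-1}\cdot\gamma|_{[t_{l-1},t_l]}\cdot\beta_{l-1}$. With a judicious, consistent choice of the $\beta_l$ (for instance, along a fixed system of arcs from $p_n$ to the disk boundaries), each such loop is homotopic in $F$ to a power of one of the standard generators, and these map to $B_{in}^{\pm1},P_{ns}^{\pm1},X_{nr}^{\pm1}$ as you intended. Equivalently, one may simply invoke that $\pi_1^{orb}(F,p_n)$ is generated by the obvious small loops around the $n-1+k+m$ marked points and check that the loop-to-braid map sends those to the listed generators; this is the clean version of what you are reaching for.

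For comparison, the paper takes a different, purely diagrammatic route: it never leaves the braid picture, instead classifying the local crossing patterns of $N$ against a string, a dotted line, or a thick line, and then applying explicit ``stretching'' moves ($LR$-, $RL$-, $D$-, $\reflectbox{D}$-, $dLR$-, $dRL$-stretchings) that drag pieces of $N$ horizontally back to the vertical line through $n$, thereby cutting the braid into visible copies of $B_{in}^{\pm1},P_{ns}^{\pm1},X_{nr}^{\pm1}$. Your approach, once fixed, is more conceptual (it identifies the problem with generating $\pi_1^{orb}(F)$) and arguably shorter; the paper's approach is more explicit and yields the concrete stretching moves that are reused verbatim in the case analysis of Proposition~\ref{gen}.
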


\begin{rem}{\rm Clearly, the braid $A$ in Lemma \ref{stretching} lies in the
    kernel of the homomorphism $\Delta:\pi_1^{orb}(PB_n({\Bbb C}(k,m;
    q)))\to \pi_1^{orb}(PB_{n-1}({\Bbb C}(k,m;
    q)))$, where $\Delta$ sends a braid of $n$ strings to the braid
    of $n-1$ strings, after removing the string from $n$ to $n$.
    We will be using the lemma in cases 
    where $\diamond$ is easily seen by some simple movements of a
    strings.}\end{rem}
    
\begin{proof}[Proof of Lemma \ref{stretching}]
  The proof is basically an application of Lemma \ref{orbi-gen} and Remark \ref{p}.

  First consider the punctured complex plane ${\Bbb C}(k+m+n-1,0;0)$. Then
  by Lemma \ref{orbi-gen} the free group
  $$\pi_1({\Bbb C}(k+m+n-1,0;0))=\pi_1(PB_1({\Bbb C}(k+m+n-1,0;0)))$$ is generated
  by the braids of one string as in the first picture of Figure 9.
  Since positions of the punctures do not
  affect the fundamental group, we move the first $n-1$ dotted lines
  to the left of the string as in the second picture in Figure 9.
  The generators which wraps these first $n-1$ dotted lines also
  are similarly drawn using allowable moves.

 \medskip

\centerline{\includegraphics[height=5cm,width=10cm,keepaspectratio]{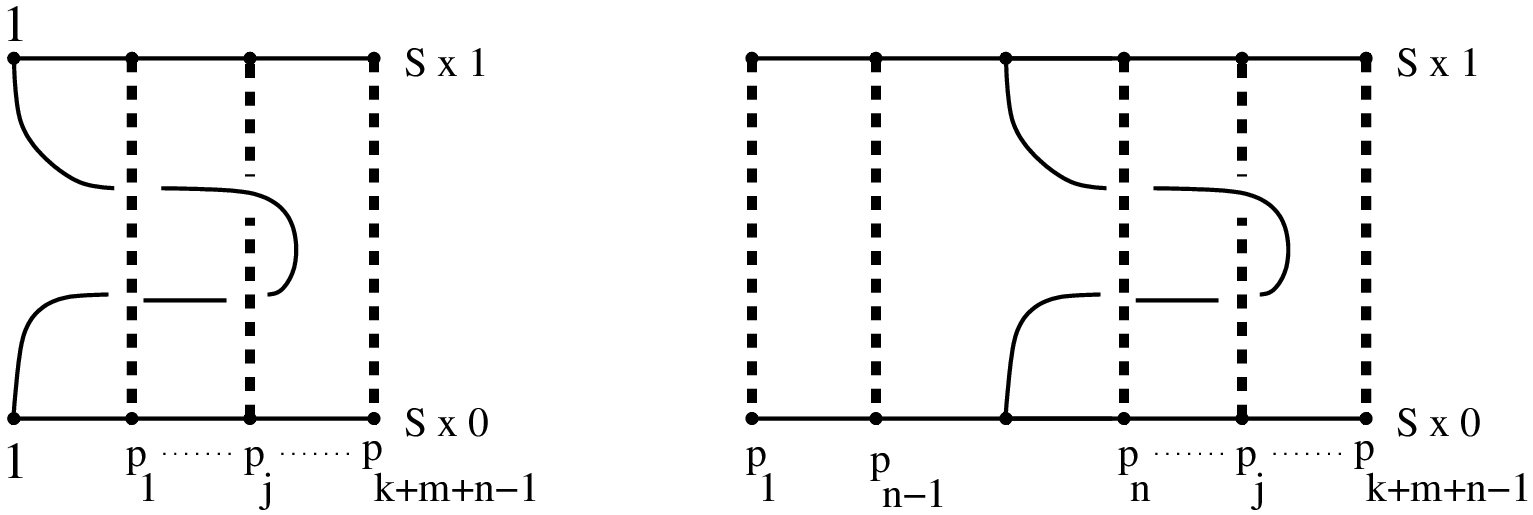}}

\centerline{Figure 9: The generators of $\pi_1(PB_1{\Bbb C}(k+m+n-1,0;0))$.}

\medskip

  Now using Remark \ref{p}, consider the embedding
 $$\pi_1(PB_1({\Bbb C}(k+m+n-1,0;0)))\to \pi_1(PB_n({\Bbb C}(k+m,0;0)))$$ by
  replacing the first $n-1$ dotted lines with straight strings. Next,
  by replacing the last $m$ dotted lines with thick lines, that is, by filling
  the last $m$ punctures with cone points of order
  $q_1, q_2,\ldots, q_m$, we get a homomorphism
  $$Q:\pi_1(PB_1({\Bbb C}(k+m+n-1,0;0)))\to \pi_1^{orb}(PB_n({\Bbb C}(k,m;q))).$$
  Note that, the image of this homomorphism consists of braids exactly
  of the type $A$, satisfying $\diamond$. Choose $\widetilde A\in Q^{-1}(A)$. Then $\widetilde A$ is
  a juxtaposition of generators of the type as in Figure 9 and their
  inverses. Now, take the image of these generators which appear in
  a decomposition of $\widetilde A$, under $Q$, to get the desired
  braids as in the statement of the lemma, to construct $A$.

  This completes the proof of Lemma \ref{stretching}.
\end{proof}

Next, we apply Lemma \ref{stretching} to prove the following
proposition, crucial for this paper.

\begin{prop}\label{gen} The subgroup $H$ generated by the following
  set of braids is normal
  in $\pi_1^{orb}(PB_n({\Bbb C}(k,m; q)))$.
$${\cal N}=\{X_{nr}, r=1,2,\ldots ,m;\ P_{ns}, s=1,2,\ldots , k;\ B_{in},
i=1,2,\ldots , n-1\}.$$\end{prop}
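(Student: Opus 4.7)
The plan is to show that for every generator $g$ of $\pi_1^{orb}(PB_n({\Bbb C}(k,m;q)))$ listed in Lemma~\ref{orbi-gen} and every $h\in{\cal N}$, the conjugate $ghg^{-1}$ lies in $H$. Since ${\cal N}$ generates $H$ and these generators of the ambient group (together with their inverses) span it, this suffices to establish normality.

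If $g$ itself already belongs to ${\cal N}$, that is, $g$ is one of $X_{nr}$, $P_{ns}$, or $B_{in}$, then $g,g^{-1}\in H$ and $ghg^{-1}\in H$ trivially. The remaining case is $g\in\{X_{jr},P_{js},B_{ij}\}$ with $i,j<n$, so that the $n$-th string appears as a vertical straight segment in both the $g$ and $g^{-1}$ blocks of the braid $ghg^{-1}$, and crossings involving the $n$-th string occur only in the middle $h$-block. In this situation I would perform a planar isotopy that straightens the first $n-1$ strings of $ghg^{-1}$: this is possible because the combined contribution of $g$ and $g^{-1}$ to those strings is the trivial pure braid on $n-1$ strands, and $h$ does not introduce any crossings among the first $n-1$ strings themselves. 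During the isotopy, the $n$-th string is dragged along into a single arc $N$ that winds around the straightened strings, thick lines and dotted lines, and $N$ can then be reparameterised to be monotonically decreasing in height.

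At this point the condition $\diamond$ of the Stretching Lemma~\ref{stretching} is satisfied by the modified representative of $ghg^{-1}$, so the lemma writes $ghg^{-1}$ as a juxtaposition of elements of ${\cal N}$ and their inverses; hence $ghg^{-1}\in H$, and normality follows. The main obstacle will be executing the straightening isotopy while preserving the monotone-height condition on $N$ in the presence of the cone points and punctures: one has to verify, case by case along the short list of generators $g$, that the ambient isotopy which cancels the $g$-crossings against the $g^{-1}$-crossings on the first $n-1$ strings can indeed be carried out without pushing $N$ through any forbidden set. The remark following Lemma~\ref{stretching} makes clear that this verification is intended to be done directly in concrete conjugation configurations such as this one, rather than by proving the more general statement that $\diamond$ holds for the entire kernel of the string-forgetting map $\Delta$.
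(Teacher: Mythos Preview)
Your proposal is correct and follows essentially the same approach as the paper. Both arguments reduce to checking $ZYZ^{-1}\in H$ for $Z$ a generator and $Y\in{\cal N}$, dispose of the case $Z\in{\cal N}$ trivially, and then for $Z\notin{\cal N}$ (so the $n$-th string is straight in $Z$ and the first $n-1$ strings are straight in $Y$) verify condition $\diamond$ and invoke the Stretching Lemma. The paper carries out precisely the case-by-case check you anticipate: the nine conjugation types $(1)$--$(9)$ are grouped into four classes $A$--$D$ and Figures 15--18 display each $ZYZ^{-1}$ after the obvious isotopy straightening the first $n-1$ strings, with the observation that $N$ can then be made monotone. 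One small point of language: ``reparameterised'' is not the right word for the last step, since reparameterisation alone cannot make a non-monotone arc monotone; what is needed (and what the paper checks in each figure) is a further small isotopy of $N$ in the complement of the straightened strands and the fixed thick and dotted lines.
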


\begin{proof} Recall that, 
$\pi_1^{orb}(PB_n({\Bbb C}(k,m; q)))$ is generated
by the braids as described in Lemma \ref{orbi-gen}. Let us denote this
set of braids by $\cal P$.

Therefore, it is enough to prove that $ZYZ^{-1}\in H$, for
all $Z\in \cal P$ and $Y\in \cal N$. That is, we need to show that
$ZYZ^{-1}$ is a juxtaposition of elements from 
$\cal N$ or their inverses. We only have to consider the following nine cases.

$$(1)\ X_{kl}X_{nr}X_{kl}^{-1}, (2)\ X_{kl}P_{ns}X_{kl}^{-1},
(3)\ X_{kl}B_{in}X_{kl}^{-1},$$
$$ (4)\ P_{kl}X_{nr}P_{kl}^{-1}
, (5)\ P_{kl}P_{ns}P_{kl}^{-1}, (6)\ P_{kl}B_{in}P_{kl}^{-1},$$
$$ (7)\ B_{lk}X_{nr}B_{lk}^{-1}
, (8)\ B_{lk}P_{ns}B_{lk}^{-1}, (9)\ B_{lk}B_{in}B_{lk}^{-1}.$$

Therefore, it is enough to show that all the above nine cases satisfy the
condition $\diamond$ of Lemma \ref{stretching}. As there are only three
braids in each case, we draw their picture and observe that condition $\diamond$ is
satisfied. We recall that, condition $\diamond$ says that the braid under study
is equivalent to a braid, where all the first $n-1$
strings are vertically straight. The method we use is simply stretch
the string $N$, so that the other strings (maximum three, depending on the
case) become straight.

After this pictorial proof, we will observe a general pattern in the
stretching method. Then we will give an argument using
isotopy, which will be applicable in all the nine cases.

We can partition the above nine cases 
as follows, so that the cases in each partition class need
similar treatment.

$$A=\{(1),(2),(4),(5)\}, B=\{(3),(6)\}, C=\{(7),(8)\}, D=\{(9)\}.$$

Therefore, we give the proof for one case from each
class, and then describe the modifications required in the proof
to prove the other cases.

First, we note that for $k=n$, there is nothing to prove in all the
nine cases. So, we can assume
that $k\neq n$.

\noindent
{\bf Class A.} We give the proof for the Case (1).
There are three possibilities to consider; $l=r$, $l<r$ and
$l>r$.

This deduction is shown in Figure 10.

\medskip

\centerline{\includegraphics[height=7cm,width=9cm,keepaspectratio]{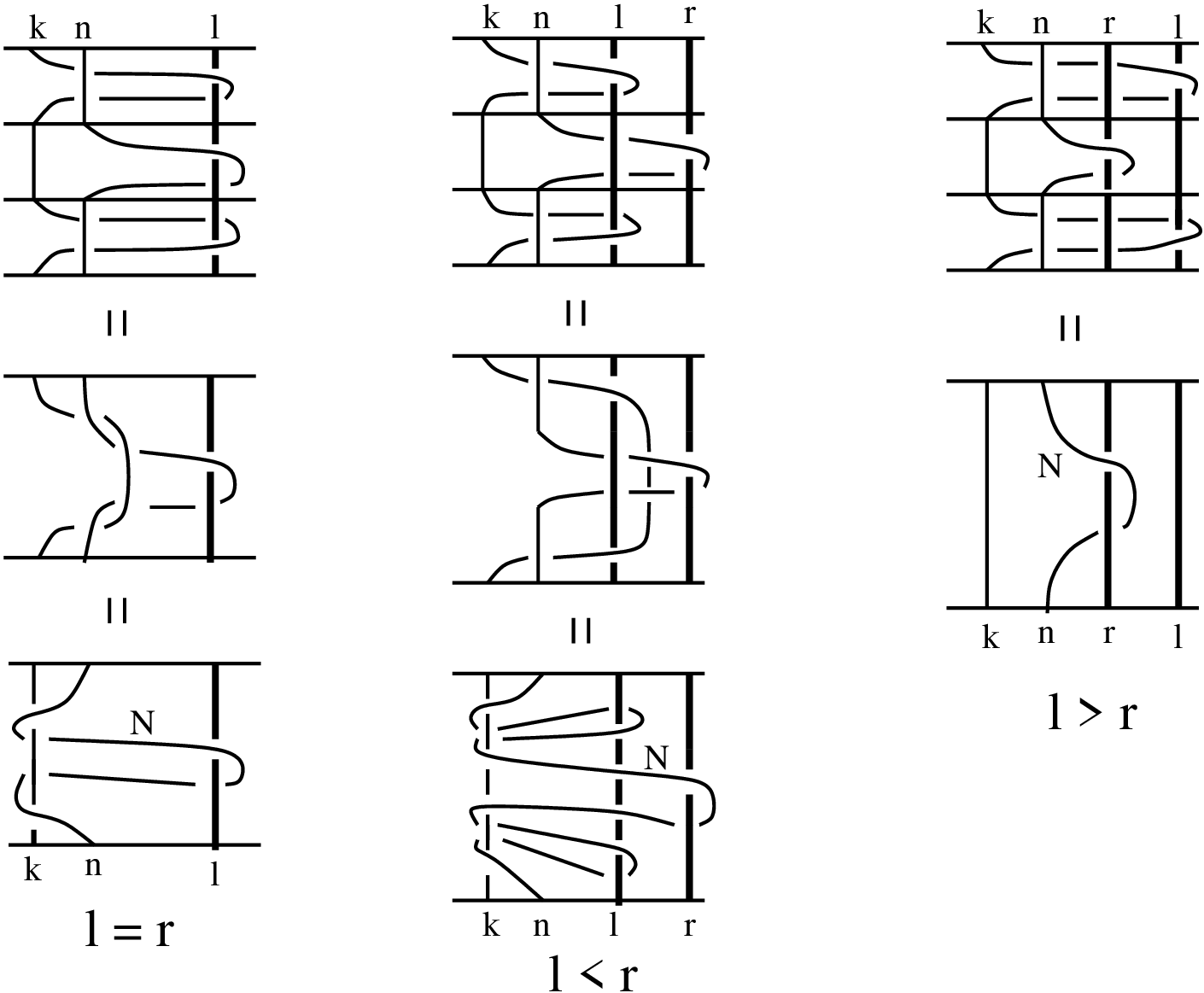}}

\centerline{Figure 10: Case (1), $X_{kl}X_{nr}X^{-1}_{kl}$ in Class A.}

\medskip

Now, we describe the other cases in Class A. Case (5) is exactly the
same as Case (1), we only have to replace the thick lines by dotted 
ones in Figure 10. Next, note that in our presentation of the braids,
the dotted lines appear before the thick lines, and hence, for Case
(2), only the third column in Figure 10 is needed with the first
thick line (corresponding to $r$) converted into a dotted
line. Similarly, for Case (4), the middle column in Figure 10
gives the proof, with the first thick line (corresponding to $l$)
converted into a dotted line.

\noindent
{\bf Class B.} We consider the Case (3) first. In this case also
we have to consider three possibilities:
$i=k$, $i<k$ and $i>k$. This stretching procedure is shown in Figure 11.

The treatment for Case (6) in Class B is exactly the same as
Case (3) as in Figure 11, but we have to replace the thick lines by dotted ones.


\centerline{\includegraphics[height=7cm,width=9cm,keepaspectratio]{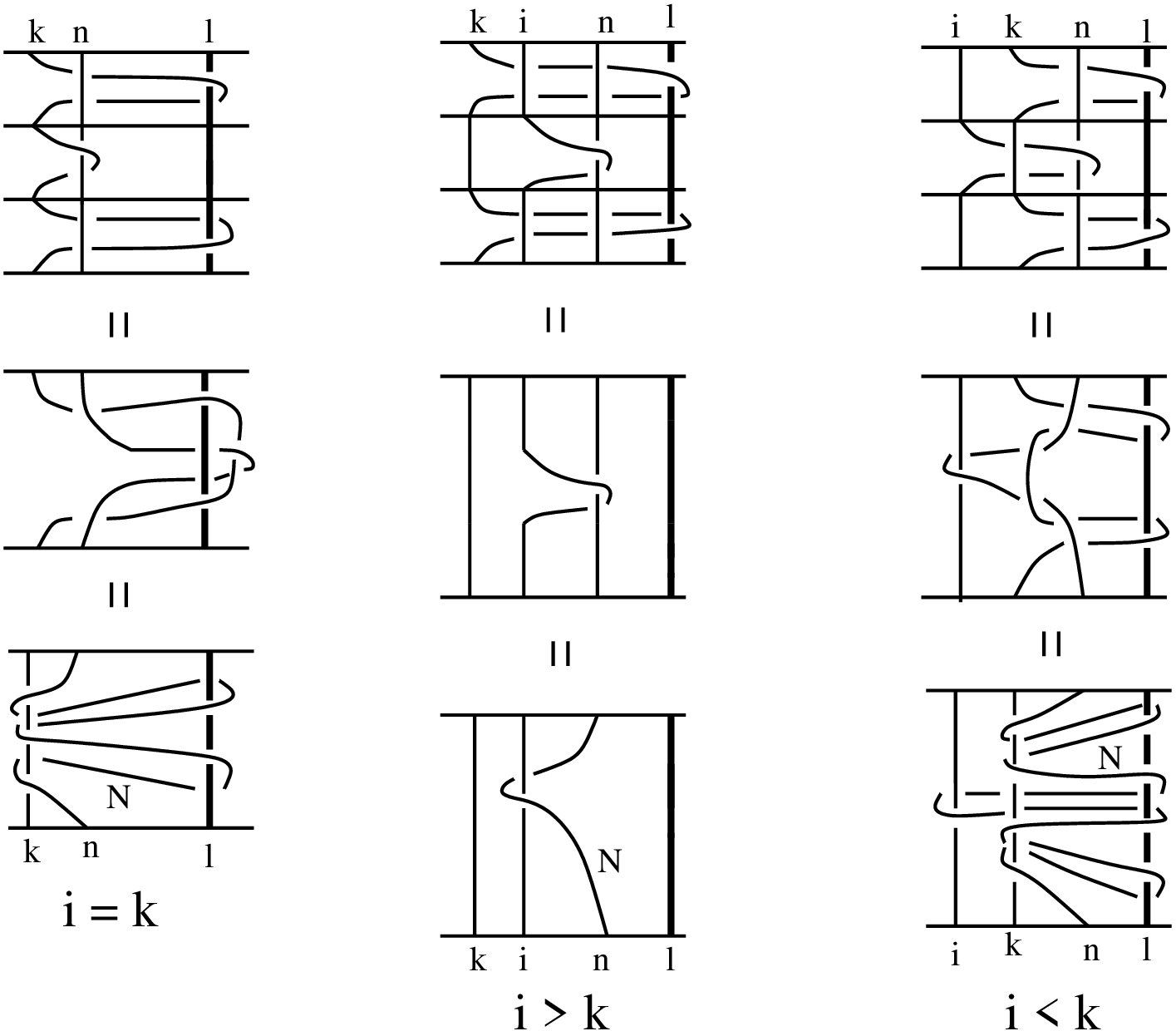}}

\centerline{Figure 11: Case (3), $X_{kl}B_{in}X^{-1}_{kl}$ in Class B.}

\medskip

\noindent
{\bf Class C.} Cases (7) and (8) are the simplest of all the cases. Case (7) is
shown below in Figure 12. Case (8) needs the same treatment as Case (7), with
the thick line replaced by a dotted one.

\medskip

\centerline{\includegraphics[height=3cm,width=6cm,keepaspectratio]{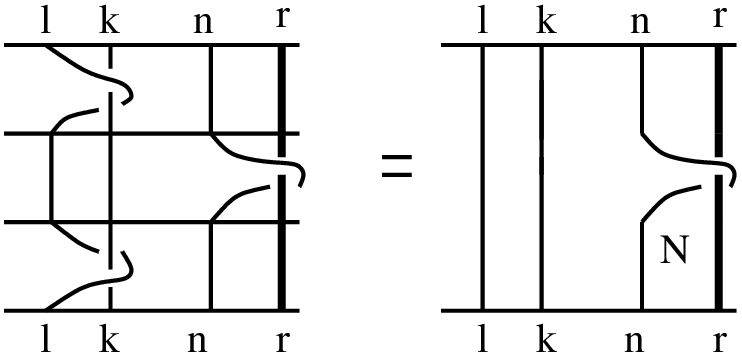}}

\centerline{Figure 12: Case (7), $B_{lk}X_{nr}B^{-1}_{lk}$ in Class C.}

\medskip

\noindent
{\bf Class D.} Case (9) has several possibilities depending on
where $i$ lies: $k<i$, $i<l$, $i=k$, $i=l$ and $l<i<k$. The details are
shown in Figure 13.

One point to note that, for clarity, in the pictures
we did not show the other strings, the dotted lines or the thick 
lines. Since, the movements of the strings are taking place
under all the other strings,
the dotted lines or the thick lines.

Now, we make a general argument which is applicable to all the above
cases.

\centerline{\includegraphics[height=10.5cm,width=10.5cm,keepaspectratio]{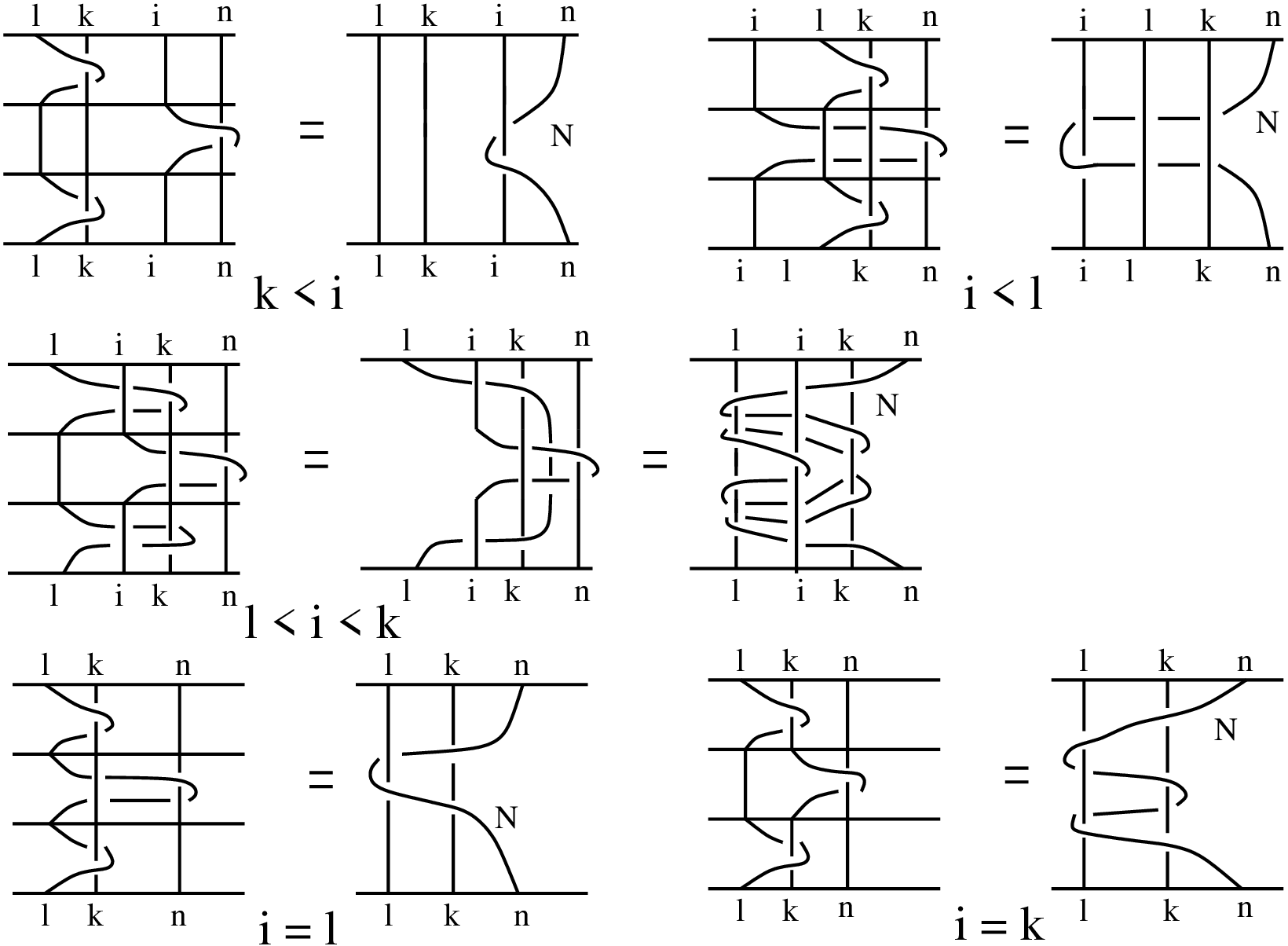}}

\centerline{Figure 13: Case (9), $B_{lk}B_{in}B^{-1}_{lk}$ in Class D.}

\medskip
\noindent
{\bf An observation and the general argument.} Let $B\in ker(\Delta)$. 
First note that, then there is an isotopy of ${\Bbb C}(k+m,0;0)$
which sends $\Delta(B)$ to the identity braid of $PB_{n-1}({\Bbb C}(k,m;q))$.
But some string of $\Delta(B)$ might have had entanglement with the 
thick lines, where the movement as in Figure 3 were used to get the
triviality.

\medskip

\centerline{\includegraphics[height=8cm,width=8cm,keepaspectratio]{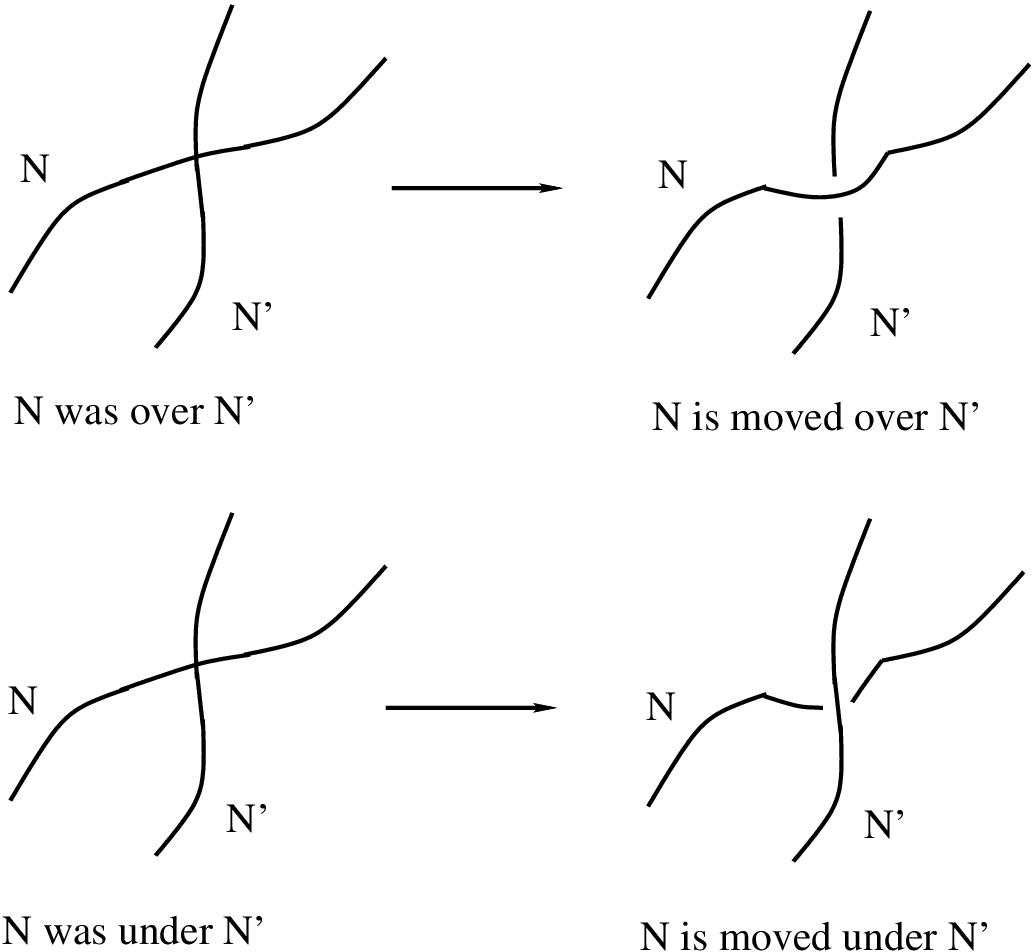}}

\centerline{Figure 14: Local isotopy to remove intersections.}

\medskip
\noindent
But in our nine cases above, it is immediate that
if we remove the string $N$ from any of the orbifold braids, then the other 
strings (maximum number is three) become straight after an
isotopy. That is, the strings other than $N$, are not entangled
with any of the thick lines, that is why we did not need to invoke Figure
3 movement here, which might have complicated matter during the
isotopy.
Therefore, all that we need to do, when $B$ is one of the nine orbifold
braids, is that, we choose an isotopy $T$ to straighten up the other
strings in $B$, and during the isotopy we do not interfere with any of
the other strings except $N$. Now, during the isotopy the string $N$
might have intersected with the other strings. Note that, we can make
sure that there are only finitely many such intersections. At each
intersection point we make one of the two following local isotopies. During the isotopy $T$,
if the string $N$ intersected any
other string, say $N'$, then there are two possibilities. Just before
$N$ intersected $N'$, (a). $N$ was above $N'$ or (b). $N$ was below $N'$. In case
(a), we make the simple local isotopy by moving $N$ away from $N'$ in the
opposite direction, so that there is no intersection in the
neighbourhood of the intersection point. Similarly in case (b) we make
a similar local
isotopy. The local isotopies are shown in Figure 14.
The idea is not to change the equivalence class of $B$ during
the local isotopies.
Hence, $T$ together with these two
local isotopies, give the isotopy which sends the 
orbifold braid $B$ to an orbifold braid, satisfying $\diamond$
of Lemma \ref{stretching}

Therefore,  
the subgroup $H$ generated by the set of braids $\cal N$, 
is a normal subgroup of $\pi_1^{orb}(PB_n({\Bbb C}(k,m; q)))$.

This completes the proof of the proposition.
\end{proof}

We now have an immediate consequence of Proposition \ref{gen} when
restricted to the case $\Bbb C$. This gives another proof of the fact that the kernel of
$f_*$ is a free group, which is a consequence of the
Fadell-Neuwirth fibration theorem. But our proof also produce an
explicit set of generators of this
free group as a subgroup of the classical pure braid group.

\begin{cor}\label{FNF}  The homomorphism $f_*:\pi_1(PB_n({\Bbb
    C}))\to \pi_1(PB_{n-1}({\Bbb C}))$, induced by the
  projection to the first $n-1$ coordinates, $f:PB_n({\Bbb
    C})\to PB_{n-1}({\Bbb C})$, has kernel freely
  generated by the elements $B^{(n)}_{in}$ for $i=1,2,\cdots, 
  n-1$.\end{cor}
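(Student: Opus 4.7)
The plan is to combine Proposition \ref{gen} (in the special case $k = m = 0$) with the classical Fadell-Neuwirth fibration and the Hopfian property of finitely generated free groups.

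From the Fadell-Neuwirth fibration theorem, the fiber of $f$ over a basepoint is $\Bbb{C}$ minus $n-1$ points, whose fundamental group is free of rank $n-1$. Since $PB_{n-1}(\Bbb{C})$ is aspherical, the long exact sequence of homotopy groups reduces to the short exact sequence
$$1 \to K \to \pi_1(PB_n(\Bbb{C})) \xrightarrow{f_*} \pi_1(PB_{n-1}(\Bbb{C})) \to 1$$
with $K = \ker f_*$ free of rank $n-1$. Let $H$ be the subgroup of $\pi_1(PB_n(\Bbb{C}))$ generated by $B^{(n)}_{1n},\ldots,B^{(n)}_{n-1,n}$. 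Each of these generators becomes the trivial braid after the $n$-th strand is deleted, so $H \subseteq K$. By Proposition \ref{gen} applied with $k = m = 0$ (which eliminates the puncture and cone-point generators from $\cal N$), $H$ is normal in $\pi_1(PB_n(\Bbb{C}))$.

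Next I would show $H = K$. The Fadell-Neuwirth fibration admits a splitting section $\sigma$ obtained by appending a straight $n$-th strand, and on fundamental groups it satisfies $\sigma_*(B^{(n-1)}_{ij}) = B^{(n)}_{ij}$ for $i < j < n$. Composing $\sigma_*$ with the quotient map $\pi_1(PB_n(\Bbb{C})) \twoheadrightarrow \pi_1(PB_n(\Bbb{C}))/H$ yields a homomorphism $\pi_1(PB_{n-1}(\Bbb{C})) \to \pi_1(PB_n(\Bbb{C}))/H$. Since $\pi_1(PB_n(\Bbb{C}))$ is generated by Artin's elements $\{B^{(n)}_{ij} : i < j\}$, modulo $H$ it is generated by the classes of $B^{(n)}_{ij}$ with $j < n$, so this composition is surjective. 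It is also injective, since its kernel lies in $\sigma_*^{-1}(K)$, and $\sigma_*^{-1}(K) = 1$ follows from $f_* \circ \sigma_* = \mathrm{id}$. Hence the composition is an isomorphism; its inverse is $\bar f_*: \pi_1(PB_n(\Bbb{C}))/H \to \pi_1(PB_{n-1}(\Bbb{C}))$, and therefore $H = K$.

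Finally, $K$ is free of rank $n-1$ and admits the $(n-1)$-element generating set $\{B^{(n)}_{in}\}$. Since $F_{n-1}$ is Hopfian, the natural surjection from the free group on $n-1$ abstract generators onto $K$ is an isomorphism, so $B^{(n)}_{1n},\ldots,B^{(n)}_{n-1,n}$ freely generate $\ker f_*$. The main substantive ingredient is the normality of $H$, supplied by Proposition \ref{gen} via the stretching technique; everything else is essentially formal.
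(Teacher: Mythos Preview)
Your proof is correct and follows essentially the same route as the paper. The paper's own proof is very terse: it cites freeness of the kernel from the Fadell-Neuwirth fibration, specializes Proposition \ref{gen} to $k=m=0$ (so only Case (9) is relevant) to obtain normality of $H$, and then implicitly uses the ``adding relations'' argument (made explicit later in the proof of Theorem \ref{esg}) to identify $\pi_1(PB_n(\Bbb C))/H$ with $\pi_1(PB_{n-1}(\Bbb C))$, whence $H=K$. Your argument via the splitting section $\sigma_*$ and the composition $q\circ\sigma_*$ is a slightly more explicit packaging of exactly that step, and your invocation of the Hopfian property of $F_{n-1}$ makes precise what the paper leaves tacit.
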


\begin{proof} From Case $(9)$ in the proof of Proposition \ref{gen}
  and Lemma \ref{stretching}, the
  corollary follows. We need to specialize to the case ${\Bbb C}$
  only, that is, put $k=m=0$. Then note that the group generated
  by $B^{(n)}_{in}$ for $i=1,2,\cdots, n-1$ is isomorphic to the fundamental group of
  $PB_1({\Bbb C}(n-1,0;0))={\Bbb C}(n-1,0;0)$, which is free.\end{proof}

\section{Proofs}
We start this section with the proof of the Fadell-Neuwirth type
fibration theorem for $c$-groupoids.

We need the following lemma.

\begin{lemma} \label{fib} Let $f:M\to N$ and $g:N\to L$ be surjective maps between
  manifolds. If $g$ is a covering map and $g\circ f$ is a fibration, then $f$
  is also a fibration.\end{lemma}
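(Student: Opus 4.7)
The plan is to verify the homotopy lifting property for $f$ directly, exploiting the fibration property of $g \circ f$ to construct a candidate lift, and then using the unique path lifting property of the covering map $g$ to show that this candidate actually lifts the prescribed homotopy along $f$.

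Concretely, suppose we are given a space $X$, a homotopy $H : X \times I \to N$, and an initial lift $\tilde H_0 : X \to M$ satisfying $f \circ \tilde H_0 = H|_{X \times \{0\}}$. Composing with $g$, we obtain a homotopy $g \circ H : X \times I \to L$, and $\tilde H_0$ is an initial lift of it along $g \circ f$, since
\[
(g \circ f) \circ \tilde H_0 \;=\; g \circ (f \circ \tilde H_0) \;=\; g \circ H|_{X \times \{0\}}.
\]
Because $g \circ f$ is a fibration by hypothesis, I would apply the homotopy lifting property to produce a map $\tilde H : X \times I \to M$ with $\tilde H|_{X \times \{0\}} = \tilde H_0$ and $(g \circ f) \circ \tilde H = g \circ H$.

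It then remains to show that $f \circ \tilde H = H$, which is the only non-automatic point. Both $f \circ \tilde H$ and $H$ are maps $X \times I \to N$ that agree at $t = 0$ (since $f \circ \tilde H|_{X \times \{0\}} = f \circ \tilde H_0 = H|_{X \times \{0\}}$) and that have the same composite with $g$ (by construction). Fixing any $x \in X$, the two paths $t \mapsto f(\tilde H(x,t))$ and $t \mapsto H(x,t)$ in $N$ start at the same point and cover the same path $t \mapsto g(H(x,t))$ under the covering map $g$. By the unique path lifting property for covering maps, these two paths coincide for every $x$, and hence $f \circ \tilde H = H$ on all of $X \times I$.

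The argument is short and the only ingredients are the homotopy lifting property of $g\circ f$ and the unique path lifting property of the covering map $g$; there is no real obstacle, beyond remembering that the verification of $f\circ \tilde H = H$ is the one point that does not follow formally from the definition of a fibration and must be extracted from covering space theory.
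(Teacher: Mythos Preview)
Your proof is correct, and it takes a genuinely different route from the paper. The paper argues locally: it invokes the criterion (Spanier, pp.~95--96) that a map is a Hurewicz fibration as soon as it is one over each member of a numerable open cover of the base, then uses paracompactness of $N$ and the local triviality of the covering map $g$ to produce such a cover $\{U_i\}$ on which $g|_{U_i}$ is a diffeomorphism onto an evenly covered set, so that $f|_{f^{-1}(U_i)}$ inherits the fibration property from $g\circ f$ restricted over $g(U_i)$. Your argument bypasses all of this by checking the homotopy lifting property globally and in one stroke, using only the unique path lifting property of $g$. Your approach is shorter, needs no paracompactness and no local-to-global machinery, and in fact works for arbitrary topological spaces rather than just manifolds; the paper's approach, on the other hand, fits the ambient smooth setting and makes the role of the manifold hypotheses more visible.
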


\begin{proof} By [\cite{Sp}, p. 95-96, Theorems 12, 13], it is enough
  to prove that $f$ is a local fibration, that is, $N$ has a numerable 
  covering by open sets $\{U_i\}_{i\in I}$, so that 
$f|_{f^{-1}(U_i)}:f^{-1}(U_i)\to U_i$ is a fibration
  for all $i\in I$. 

Since $N$ is paracompact, every open covering has a numerable refinement. 
See [\cite{Sp}, p. 95-96]. Therefore, since $g$ is a covering map,
there is a numerable open covering $\{U_i\}_{i\in I}$, such that
for each $i\in I$, $g(U_i)$ is connected, evenly
covered by $g$ and $g|_{U_i}:U_i\to g(U_i)$ is a diffeomorphism. 

Now, showing that 
$f|_{f^{-1}(U_i)}:f^{-1}(U_i)\to U_i$ is a fibration is straight forward.
\end{proof}

\begin{proof}[Proof of Theorem \ref{fibration}]
  Let $\cal G$ be a c-groupoid of dimension $\geq 2$. That is, the
  quotient map $f:{\cal G}_0\to |{\cal G}|$ is a
covering map, and hence $|{\cal G}|$ is again a manifold of dimension $\geq 2$. We have the
following commutative diagram.

\centerline{
\xymatrix{
{\cal G}_0^n\ar[r]\ar[d]^{f^n}& {\cal G}_0^{n-1}\ar[d]^{f^{n-1}}\\
|{\cal G}|^n\ar[r] & |{\cal G}|^{n-1}}}
\noindent
Here, the horizontal maps are projections and the vertical maps are
covering maps. Note that, the diagram
induces the following.

\centerline{
\xymatrix{
PB_n({\cal G})_0\ar[r]\ar[d]^{f^n|_{PB_n({\cal G})_0}}& PB_{n-1}({\cal G})_0\ar[d]^{f^{n-1}|_{PB_{n-1}({\cal G})_0}}\\
PB_n(|{\cal G}|)\ar[r] & PB_{n-1}(|{\cal G}|)}}

By the Fadell-Neuwirth fibration theorem, since $|{\cal G}|$ is a
manifold of dimension $\geq 2$, the lower horizontal map is a
locally trivial fibration. Since the two
vertical maps are covering maps, the top horizontal map is
a fibration of manifolds by Lemma \ref{fib}. 

Now, we check the other conditions of $a$- and $b$-fibrations.

\noindent
$\bf {F^a}.$
To prove that $F^a$ is an $a$-fibration, we only have to show 
that $PB_n({\cal G})_0$ is a (left) $PB^a_{n-1}({\cal G})$-space, with the structure
maps induced from the structure maps of $PB^a_{n-1}({\cal G})$.

We recall the target map from Definition \ref{clg},

$$\mu:PB^a_n({\cal G})_1=PB^a_{n-1}({\cal G})_1\times_{PB_{n-1}({\cal G})_0}PB_n({\cal G})_0\to
PB_n({\cal G})_0$$
of the Lie groupoid $PB^a_n({\cal G})$. This is defined as follows.

Let $(x_1,x_2,\ldots, x_n)\in PB_n({\cal G})_0$ and
$$(\alpha_1,(x_1,x_2),\ldots, (x_1,x_2,\ldots, x_{n-1}))\in PB^a_{n-1}({\cal G})_1.$$
Note that $s(\alpha_1)=x_1$, then define
$$\mu((\alpha_1,(x_1,x_2),\ldots, (x_1,x_2,\ldots, x_{n-1})), (x_1,x_2,\ldots, x_n))=(t(\alpha_1),x_2,\ldots, x_{n-1},x_n).$$

The action map is nothing but the target map of $PB^a_n({\cal G})$.

The remaining properties (see Definition \ref{leftspace}) are easy to check.

Therefore, the homomorphism 
$F^a:PB^a_n({\cal G})\to PB^a_{n-1}({\cal G})$ is an $a$-fibration of Lie groupoids.

\noindent
$\bf {F^b}.$ We now prove that the homomorphism $F^b$ is a $b$-fibration.
For this we only have to check that the map $$f^b:PB^b_n({\cal G})_1\to
PB^b_{n-1}({\cal G})_1\times_{PB_{n-1}({\cal G})_0}PB_n({\cal G})_0$$
is a surjective submersion. But this follows from the following
observations about the different maps involved in the above display.

$(i)$ Let $(\alpha_1, \alpha_2,\ldots,\alpha_n)\in PB^b_n({\cal G})_1$. Then,
this element under the above map goes to
$$((\alpha_1, \alpha_2,\ldots,\alpha_{n-1}), (s(\alpha_1), s(\alpha_2),\ldots,s(\alpha_n))).$$
Hence, the map $f^b$ is obviously surjective, since $s(1_{x_n})=x_n$, and we only need that the last coordinate
in the range has a preimage in ${\cal G}_1$ under $s$.

$(ii)$ Next, note that the spaces involved are open sets in the product space
${\cal G}_0^n$ or ${\cal G}_1^n$ and the maps are either projections or the map $s$ at the
coordinate level. Furthermore, $s$ is a submersion by assumption. Therefore, the map $f^b$
is a submersion.

This completes the proof of the theorem.
\end{proof}

\begin{rem}{\rm A statement similar to Theorem \ref{fibration} also holds
    by taking projection to the first $k$ coordinates, from $PB_n({\cal G})_0$
    to $PB_k({\cal G})_0$, and by suitably modifying Lemmas \ref{pb} and \ref{pb1}.}\end{rem}

We now prove Proposition \ref{mt}, which gives the counter
examples to the Fadell-Neuwirth
fibration theorem, in the context of Lie groupoids. That is, we show that
Theorem \ref{fibration} is not true in general. 

\begin{proof}[Proof of Proposition \ref{mt}] 
  Recall that $S=M/H$, where $H$ is a finite group acting effectively
  on a connected manifold $M$ of dimension $\geq 2$.

  By definition the object space of the Lie groupoid $PB_n^*({\cal G}(M, H))$ is
  as follows. Here $*=a$ or $b$.
  $$PB_n({\cal G}(M,H))_0=\{(x_1,x_2,\ldots, x_n)\in M^n\ |\ Hx_i\neq Hx_j,\ \text{for}\ i\neq j\}.$$
\noindent
  Next, consider the projection $f:PB_n({\cal G}(M,H))_0\to PB_{n-1}({\cal G}(M,H))_0$ to
  the first $n-1$ coordinates. Note that $f=F^a_0=F^b_0$. We will show that $f$
  is not a fibration.
  
  Since $S$ has at least one singular point, there is a point $s\in M$, so that
  the isotropy group $H_s=\{h\in H\ |\ hs=s\}$ is nontrivial.  
  There is a neighbourhood $U_s\subset M$ of $s$ preserved by $H_s$ and 
  $hU_s\cap U_s=\emptyset$ for all $h\in H - H_s$. Such a
  neighbourhood exists, see the proof of [\cite{Thu}, Proposition 5.2.6].
  Since regular points are dense in $S$, there is a point $s'\in U_s$
  which has trivial isotropy group. That is, $s'$ corresponds
  to a regular point and $s$ corresponds to a singular point on $S$.

   Choose a point $p=(s, x_2,\ldots,
  x_{n-1})\in PB_{n-1}({\cal G}(M,H))_0$, such that
  $Hx_i\neq Hs'$ for all $i=2,3,\ldots, n-1$. Let $p'=(s', x_2,\ldots, x_{n-1})$.
  Note that $|Hs| < |Hs'|$. 
  Then it follows that $f^{-1}(p)$ and $f^{-1}(p')$ are obtained from
  $M$, by removing different number of points. Hence, they are not
  homotopy equivalent. Therefore, $f$ is not a fibration. 

  In fact, we have proven that $f$ is not even a
quasifibration. Since a quasifibration with a path connected base
space demands that any two fibers
should have isomorphic homotopy groups.

This proves Proposition \ref{mt}. 
\end{proof}

Next, we give the proof of the short exact sequence of orbifold
fundamental groups of the
configuration orbifold of the orbifold ${\Bbb C}(k,m;q)$. This proof is the
crucial application of the explicit set of generators of the pure orbifold braid
group, we constructed in Lemma \ref{orbi-gen}.

\begin{proof}[Proof of Theorem \ref{esg}]
By Remark \ref{es}, we have to prove the exactness of the following
sequence.

\centerline{
\xymatrix{1\ar[r]&K\ar[r]&\pi_1^{orb}(PB_n(S))\ar[r]^{\Delta}&\pi_1^{orb}(PB_{n-1}(S))\ar[r]&1.}}
\noindent
Where, $S={\Bbb C}(k,m;q)$ and $F=S-\{(n-1)-\text{regular points}\}$ and $K$
is isomorphic to $\pi_1^{orb}(F)$.

First, let us recall that, when $S={\Bbb C}$ then the above exact
sequence follows from Corollary \ref{FNF}.

The map $\pi_1(PB_n({\Bbb C}))\to \pi_1(PB_{n-1}({\Bbb C}))$ is  
obtained by removing the last string in a braid, representing an
element 
of $\pi_1(PB_n({\Bbb C}))$. This is the main idea to understand the
homomorphism $\Delta$ in terms
of braid pictures, and to prove the above exact sequence.

Now, following the braid presentation
of elements of $\pi_1^{orb}(B_n({\Bbb C}(k,m; q)))$
as was done in Section \ref{obg}, we know 
that a typical element of $\pi_1^{orb}(PB_n({\Bbb C}(k,m; q)))$ 
looks like $A$ as in Figure 4.

So, we send $A$ 
to the braid $\Delta (A)\in \pi_1^{orb}(PB_{n-1}({\Bbb C}(k,m; q)))$ after
removing the last string from $n$ to $n$, as is shown in Figure 15.
 
\medskip

\centerline{\includegraphics[height=4cm,width=8cm,keepaspectratio]{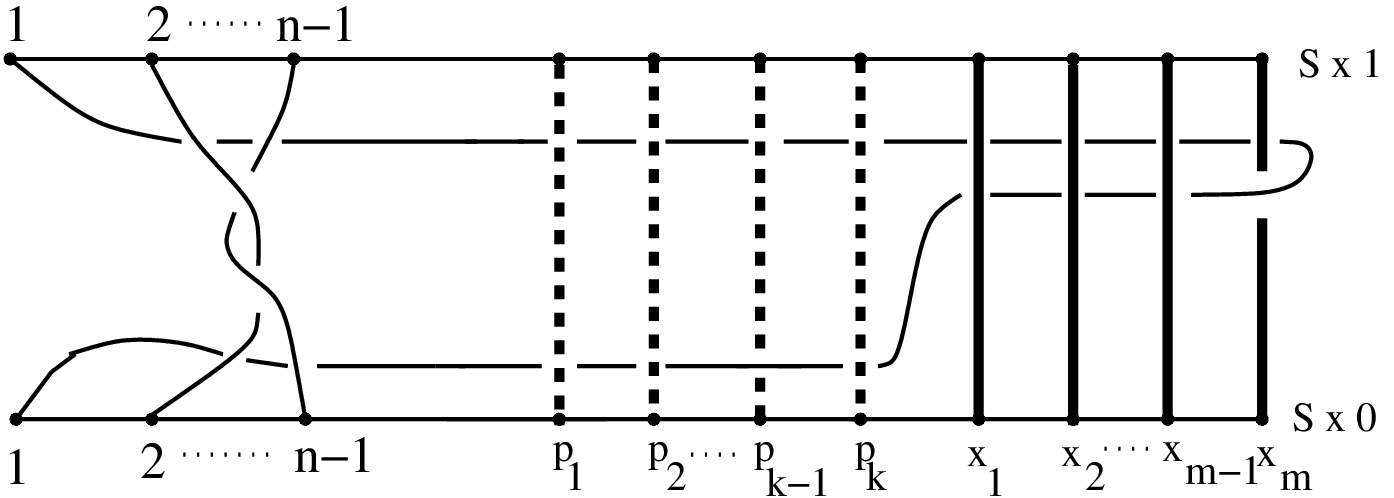}}

\centerline{Figure 15: The braid $\Delta (A)$ in $\pi_1^{orb}(PB_{n-1}({\Bbb C}(k,m; q)))$.}

\medskip

Clearly, $\Delta$ is a surjective homomorphism, as the law of composition is
juxtaposition of braids. This also follows from the fact that, by adding the extra
relations $X_{nr}, r=1,2,\ldots ,m;\ P_{ns}, s=1,2,\ldots , k$ and $B_{in},
i=1,2,\ldots , n-1$ in the presentation of $\pi_1^{orb}(PB_n({\Bbb C}(k,m; q)))$, we get the
presentation of $\pi_1^{orb}(PB_{n-1}({\Bbb C}(k,m; q)))$. Since, any element of
$\pi_1^{orb}(PB_n({\Bbb C}(k,m; q)))$ is a juxtaposition of the generators as in
Lemma \ref{orbi-gen}, therefore, removing the string from $n$ to $n$ is equivalent to attaching the
extra relations as above.

Recall that,
we denoted the above set of braids by $\cal N$ in Proposition \ref{gen}. A splitting
map is defined by sending $\Delta (A)$ to an element $\tilde A$, with the string from $n$ to $n$
not entangling with any other strings, the dotted or the thick
lines and going over all of them, as shown in Figure 16. It is again
easy to see that this splitting is a homomorphism. 

\medskip

\centerline{\includegraphics[height=4cm,width=8cm,keepaspectratio]{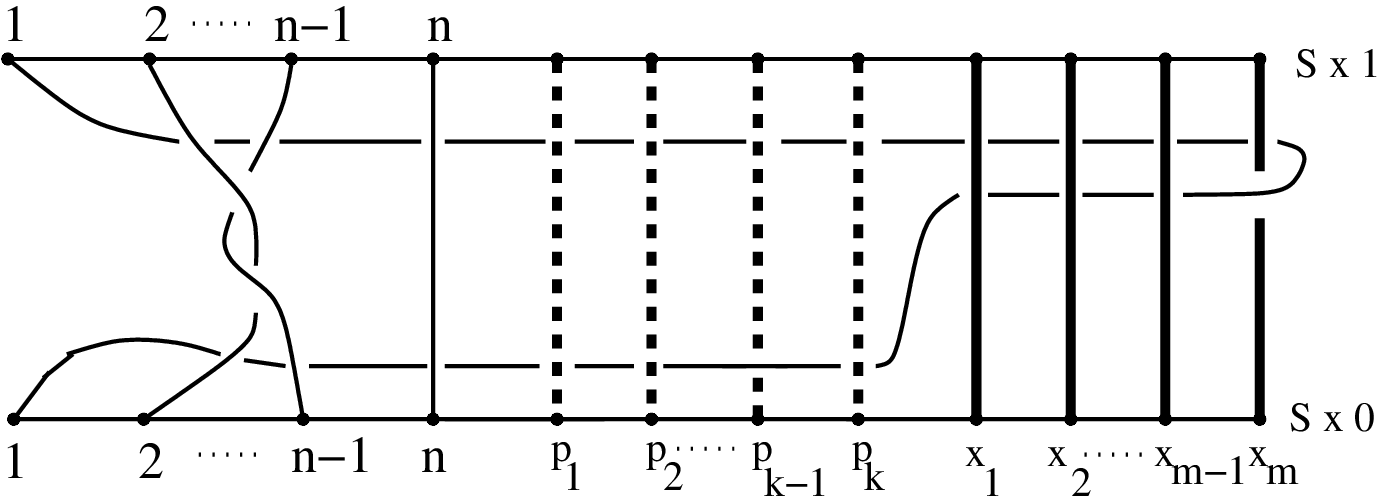}}

\centerline{Figure 16: The braid $\tilde A$ in $\pi_1^{orb}(PB_n({\Bbb C}(k,m; q)))$.}

\medskip

Clearly, the braids in the set $\cal N$ belong
  to the kernel of $\Delta$.

  Let $\Delta (A)$ be the trivial element in
$\pi_1^{orb}(PB_{n-1}({\Bbb C}(k,m; q)))$. 

We want to prove that, $A$ is equivalent to a
juxtaposition of finitely many members of
$\cal N$ or its inverses. Equivalently, we
have to show that the braids in $\cal N$ generate a
subgroup, which is normal in $\pi_1^{orb}(PB_n({\Bbb C}(k,m; q)))$.
Hence, we apply Proposition \ref{gen}, to see that the kernel of $\Delta$ is generated
by $\cal N$.

Now, recall that $X_{nr}$ is of order $q_{r}$, and the remaining two sets of
generators are of infinite order.

We will now show that the elements in $\cal N$
freely generate the kernel of $\Delta$, except the relations, $X_{nr}^{q_r}, r=1,2,\ldots ,m$.
That is, the  kernel of $\Delta$ is isomorphic to
$$C_{q_1}*C_{q_2}*\cdots *C_{q_m}*C*C*\cdots *C\simeq\pi_1^{orb}(F).$$
Here, $C_{q_i}$ is the cyclic group of order $q_i$, $C$ is infinite cyclic and there are $k+n-1$
number of factors of $C$ in the above display.

First, recall that by Corollary \ref{FNF},
the kernel of $f_*:PB_n({\Bbb C})\to PB_{n-1}({\Bbb C})$ is
a free group on $n-1$
generators, and it is generated by $B^{(n)}_{in}$ for
$i=1,2,\ldots,n-1$.

Now, we replace the dotted and the thick lines in the generators 
$X_{nr}, P_{ns}$ and $B_{in}$ of the kernel of $\Delta$ by straight strings,
number the strings and denote them by 

 $$\tilde X_{nr}, r=1,2,\ldots ,m;\ \tilde P_{ns}, s=1,2,\ldots , k;\ \tilde B_{in},
 i=1,2,\ldots , n-1.$$

 These braids look as follows.
 
 \medskip

\centerline{\includegraphics[height=3cm,width=6cm,keepaspectratio]{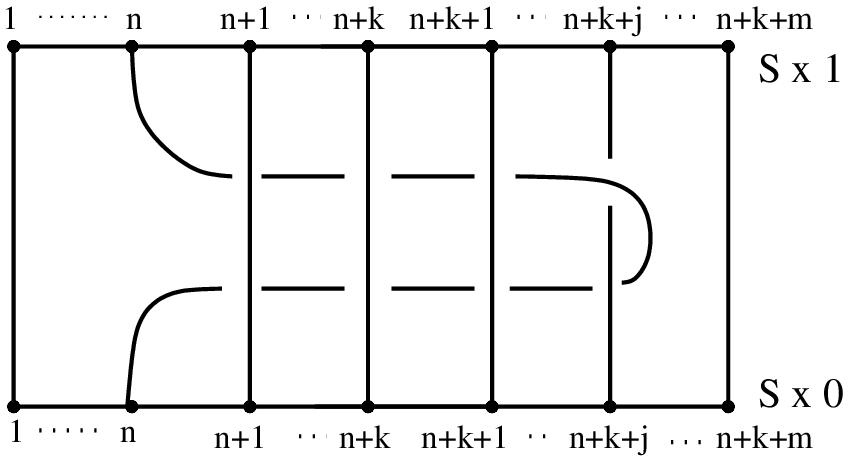}\
  \ \includegraphics[height=3cm,width=6cm,keepaspectratio]{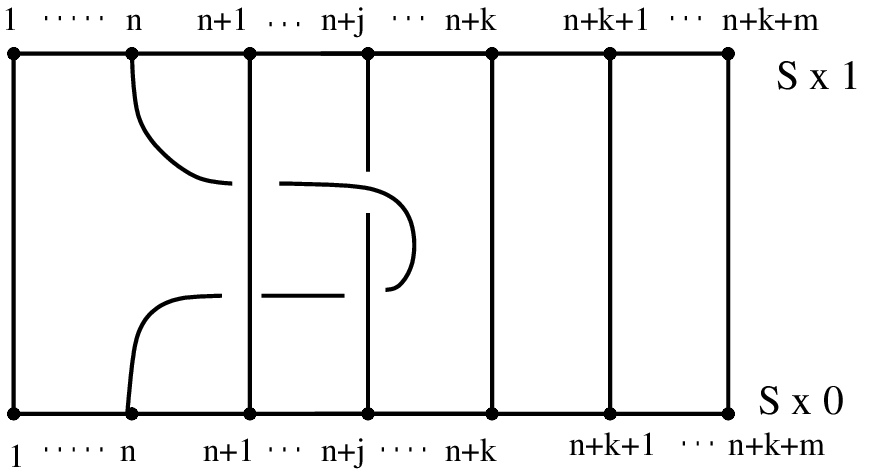}}

\centerline{Figure 17: $\tilde X_{nr}$.\ \ \ \ \ \ \ \ \ \ \ \ \ \ \ \ \ \ \ \ \ \ \ Figure 18: $\tilde P_{ns}$.}

\medskip

\centerline{\includegraphics[height=3cm,width=6cm,keepaspectratio]{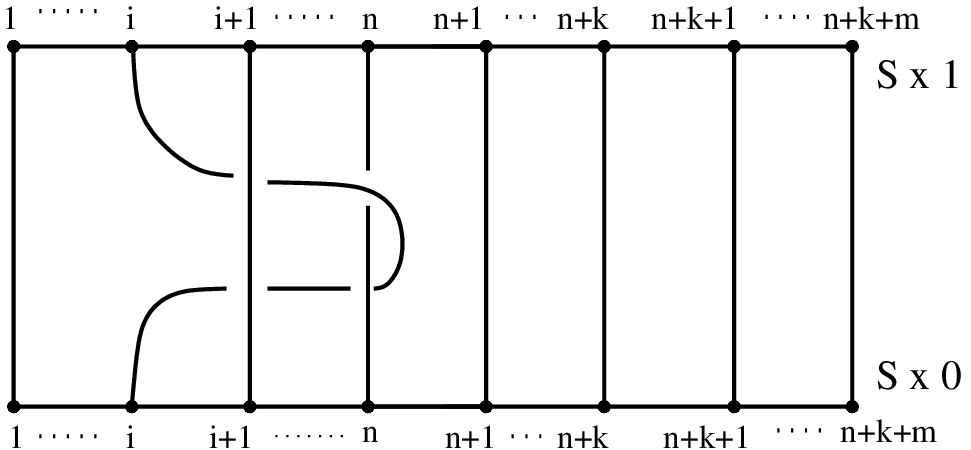}}

\centerline{Figure 19: $\tilde B_{in}$.}

\medskip

 Then, $\tilde {\cal N}=\{\tilde X_{nr}, \tilde P_{ns}, \tilde B_{in}\}$ 
 generates a free group, since it is easily identified (by permuting
 the coordinates using the transposition $(n, n+k+m)$)
 with the kernel of   
$$\pi_1(PB_{m+k+n}({\Bbb C}))\to \pi_1(PB_{m+k+n-1}({\Bbb C})),$$ 
generated by
$$\{B^{(n+k+m)}_{in}, B^{(n+k+m)}_{nj},\ \text{for}\ 
j=n+1,\ldots, n+k+m; \ \text{and}\ i<n\},$$
and we know that the above kernel is a free
group by Corollary \ref{FNF}.

Next, we put back the dotted and the thick lines (using the
 map ${\cal O}_{n+k+m}$ in Remark \ref{G}), then, it is clear
from Section \ref{obg}, 
that we only have to put the relations $X_{nr}^{q_{r}},
r=1,2,\ldots ,m$ to get a 
complete set of relations in the presentation of the kernel of
$\Delta$.   
Since, there is no relations in a presentation of the subgroup generated by $\tilde {\cal N}$
in $\pi_1(PB_{m+k+n}({\Bbb C}))$, we get that after removing the $\tilde{}$, there is no 
relations in a presentation of the subgroup generated
by $\cal N$ in $\pi_1^{orb}(PB_n({\Bbb C}(k,m; q)))$, except the above
finite order relations. This is because ${\cal O}_{n+k+m}$ is a homomorphism.
Therefore, the kernel of $\Delta$ is isomorphic
to $\pi^{orb}_1(F)$.

This completes the proof of the theorem.
  \end{proof}

  Now, we are in a position to give the proof of Theorem \ref{mtpf}, 
  giving virtual poly-$\cal F$ structure on some affine and finite complex Artin groups.

  To prove the theorem, furthermore, we need the following two results.

\begin{thm}\label{AO} (\cite{MS}) All affine type Artin groups are torsion free.\end{thm}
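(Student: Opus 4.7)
The plan is to deduce torsion freeness from the existence of a finite-dimensional classifying space for each affine Artin group $\mathcal{A}$. For any Artin group, the Salvetti complex $\mathrm{Sal}(\mathcal{A})$ is a finite CW complex with $\pi_1(\mathrm{Sal}(\mathcal{A})) \cong \mathcal{A}$. The $K(\pi,1)$ conjecture asserts that $\mathrm{Sal}(\mathcal{A})$ is aspherical, and once this is granted, torsion freeness is automatic: a nontrivial finite cyclic subgroup $C \leq \mathcal{A}$ would lift to a covering space of $\mathrm{Sal}(\mathcal{A})$ with the homotopy type of $BC$, but $BC$ has infinite cohomological dimension while $\mathrm{Sal}(\mathcal{A})$ has finite dimension, a contradiction. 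Thus the task reduces to establishing the $K(\pi,1)$ conjecture for every affine type, which is the recent theorem of Paolini and Salvetti.

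An alternative route, more directly aimed at torsion freeness and avoiding the full $K(\pi,1)$ conjecture, is to exhibit each affine Artin group as a (quasi-)Garside group. For types $\tilde A_n$ and $\tilde C_n$, Digne constructed a genuine Garside structure, from which torsion freeness follows by the standard normal form argument: every element admits a canonical greedy factorization, and one computes directly that a power of a nontrivial element cannot be trivial. For the remaining affine types, in particular $\tilde B_n$, $\tilde D_n$, and the exceptional types $\tilde E_i, \tilde F_4, \tilde G_2$, the naive simple set is infinite and one passes to the dual quasi-Garside structure of McCammond–Sulway built from noncrossing partitions in the affine Coxeter group; torsion freeness then follows from the same kind of normal form analysis adapted to the quasi-Garside setting.

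The main obstacle is that no uniform elementary argument is known: either one invokes the delicate Paolini–Salvetti resolution of the $K(\pi,1)$ conjecture in affine types, or one goes through the type-by-type Garside/quasi-Garside analysis culminating in McCammond–Sulway. The hardest cases in either approach are $\tilde D_n$ and the exceptional affine types, which is precisely why Theorem \ref{AO} is stated here as an input from the literature rather than proved from scratch. I would therefore simply record it as a consequence of these works and use it as a black box in the proof of Theorem \ref{mtpf}, exactly as the paper already does with other deep external results such as \cite{BFW}.
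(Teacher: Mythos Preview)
Your proposal is correct and matches the paper's own proof, which simply cites Paolini--Salvetti \cite{PS} for the $K(\pi,1)$ conjecture in affine types; you have merely made explicit the standard step that asphericity of the finite-dimensional Salvetti complex forces torsion freeness. The alternative McCammond--Sulway route you sketch is a genuine independent argument, but it is not the one the paper invokes.
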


\begin{thm}(\cite{All}) \label{All} Let $\cal A$ be an Artin group, and $\cal O$ be an orbifold as 
described in the following table. Then, $\cal A$ can be embedded as a normal 
subgroup in $\pi_1^{orb}(B_n({\cal O}))$. The third column gives the quotient
group $\pi_1^{orb}(B_n({\cal O}))/{\cal A}$.

\medskip
\centerline{
\begin{tabular}{|l|l|l|l|}
\hline
{\bf Artin group of type} & {\bf Orbifold $\cal O$}&{\bf Quotient group}&$n$\\
\hline \hline
&&&\\
$B_n$& ${\Bbb C}(1,0)$ &$<1>$& $n>1$\\
\hline
&&&\\
$\tilde A_{n-1}$& ${\Bbb C}(1,0)$ &${\Bbb Z}$& $n>2$\\
\hline
&&&\\
$\tilde B_n$&${\Bbb C}(1,1;(2))$  &${\Bbb Z}/2$& $n>2$\\
\hline
&&&\\
$\tilde C_n$&${\Bbb C}(2,0)$ &$<1>$&$n>1$\\
  \hline
  &&&\\
  $\tilde D_n$&${\Bbb C}(0,2;(2,2))$&${\Bbb Z}_2\times{\Bbb Z}_2$&$n>2$\\
  \hline
\end{tabular}}

\medskip
\centerline{\rm{Table: Embedding Artin groups in orbifold braid groups}}
\end{thm}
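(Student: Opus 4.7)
The plan is to follow Allcock's strategy from \cite{All}: for each row of the table, I would exhibit explicit orbifold braid pictures in $\pi_1^{orb}(B_n(\mathcal{O}))$ for the standard Artin generators of $\mathcal{A}$, verify that the defining Artin relations of type $B_n$, $\tilde{A}_{n-1}$, etc.\ hold among these pictures (giving a homomorphism from $\mathcal{A}$ into the orbifold braid group), and then identify the cokernel with the finite abelian group listed in the third column.

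Conceptually, each row arises from a folding, or a covering, of a pure Artin arrangement. The starting point is the classical identification, by Brieskorn--Deligne in the finite case and by van der Lek in the affine case, of the Artin group $\mathcal{A}_W$ with $\pi_1$ of the complement of the complexified reflection arrangement modulo $W$. For $B_n$ and $\tilde{C}_n$, this complement is literally an orbifold configuration space (once-punctured and twice-punctured plane respectively), so the embedding is an isomorphism and the quotient is trivial. For $\tilde{A}_{n-1}$, the affine reflection arrangement exponentiates via $z\mapsto e^{2\pi i z}$ to unordered configurations on $\mathbb{C}^*=\mathbb{C}(1,0)$; the extra $\mathbb{Z}$ is the total winding number of the configuration around the puncture, invisible to $\mathcal{A}$ but visible to the full orbifold braid group. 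For $\tilde{B}_n$ and $\tilde{D}_n$ I would fold the $\tilde{C}_n$ picture by an involution (respectively a Klein four group) that interchanges or stabilizes the two punctures, converting each puncture into an order-$2$ cone point and producing the $\mathbb{Z}/2$ and $\mathbb{Z}/2\times\mathbb{Z}/2$ quotients.

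At the level of braids, the Artin generators of $\tilde{B}_n$ and $\tilde{D}_n$ are realized as half-twists around cone points combined with standard strand-crossings, using the orbifold braid relations developed in Section \ref{obg}. Normality of the image subgroup can be checked either directly from these pictures, using arguments parallel to Proposition \ref{gen}, or by observing that the folding data above realize $B\mathcal{A}$ as the orbifold fundamental group of a finite orbifold cover of $B_n(\mathcal{O})$, which is automatically normal since the deck group is abelian in every row.

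The main obstacle is verifying, for $\tilde{B}_n$ and $\tilde{D}_n$, both that (i) the explicit braid pictures satisfy precisely the Artin relations and no unintended ones, so that the map $\mathcal{A}\to\pi_1^{orb}(B_n(\mathcal{O}))$ is injective, and (ii) the cokernel is exactly the stated finite abelian group rather than something larger. Both are addressed by constructing the orbifold covering whose orbifold fundamental group is $\mathcal{A}$ and whose deck group is the desired abelian group: injectivity is then a consequence of covering-space theory, the Artin relations hold automatically in a group defined this way, and the quotient is computed from the deck group. The subtlety is that the covering must be an \emph{orbifold} covering, so that cone points of order $2$ unfold correctly to regular punctures in the cover, which requires a careful matching between the order-$2$ reflections in the affine Coxeter group and the orbifold isotropy at the cone points.
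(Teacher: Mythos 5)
The paper does not actually prove this theorem: it is quoted from Allcock, and the ``proof'' supplied is simply the citation \cite{All}, so there is no internal argument to measure your proposal against. Your sketch (explicit orbifold braid pictures for the Artin generators, identification of the Artin groups with orbifold fundamental groups of finite covers of $B_n({\cal O})$ via the Brieskorn/van der Lek descriptions of the orbit spaces, and reading off the deck group as the quotient) is a faithful reconstruction of Allcock's own strategy, i.e.\ essentially the same approach the paper is relying on by reference.
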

  
We also need the following.

\begin{prop}\label{fi} If a torsion free, finitely presented poly-$\cal {VF}$ group
  has a normal series with finitely presented subgroups, then the group is
  virtually poly-$\cal F$.\end{prop}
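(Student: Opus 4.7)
The plan is induction on the length $n$ of the poly-$\cal{VF}$ series $1=G_0\vtr G_1\vtr\cdots\vtr G_n=G$, using that each $G_i$ is finitely presented.

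\emph{Base case $n=1$.} Here $G$ itself is virtually free and torsion-free, hence free by the Stallings--Swan theorem (a torsion-free virtually free group has cohomological dimension one and so is free); in particular $G$ is poly-$\cal F$.

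\emph{Inductive step.} First I would pass to a finite-index subgroup: since $G/G_{n-1}$ is virtually free, pick a finite-index free subgroup $F\leq G/G_{n-1}$ and let $H$ be its preimage in $G$, so $[G:H]<\infty$, $G_{n-1}\vtr H$, and $H/G_{n-1}\cong F$ is free. The subgroup $G_{n-1}$ is torsion-free, finitely presented by hypothesis, and carries the shorter normal series $1=G_0\vtr\cdots\vtr G_{n-1}$ with finitely presented terms, so by the inductive hypothesis $G_{n-1}$ contains a finite-index poly-$\cal F$ subgroup $K$.

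Next I would upgrade $K$ to a subgroup normal in $H$. Since $G_{n-1}$ is finitely generated, it has only finitely many subgroups of the fixed index $[G_{n-1}:K]$, so $K'':=\bigcap_{h\in H}hKh^{-1}$ is a finite intersection; it is of finite index in $G_{n-1}$, normal in $H$, and poly-$\cal F$ (being a subgroup of the poly-$\cal F$ group $K$, by the remark following Definition \ref{VPF}). The extension $1\to G_{n-1}/K''\to H/K''\to H/G_{n-1}\to 1$ exhibits $H/K''$ as finite-by-free, hence virtually free; taking a finite-index free subgroup of $H/K''$ and letting $L\leq H$ be its preimage yields $[G:L]<\infty$, and $L$ is an extension of the poly-$\cal F$ group $K''$ by a free group, so $L$ itself is poly-$\cal F$. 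This exhibits $G$ as virtually poly-$\cal F$.

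The step I expect to be the main obstacle is the normalizing trick $K\mapsto K''$: it genuinely uses that $G_{n-1}$ is finitely generated so that the conjugates of $K$ form a finite set, which is ultimately why the finite presentability hypothesis on the series terms is needed in the proposition. The remaining verifications---that torsion-freeness, finite presentability, and the finitely presented normal series condition all pass to $G_{n-1}$ and to finite-index subgroups---are routine, so they should not pose any difficulty.
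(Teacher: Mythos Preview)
Your proof is correct and follows essentially the same inductive scheme as the paper: apply induction to $G_{n-1}$, use finite generation of $G_{n-1}$ to replace the resulting finite-index poly-$\cal F$ subgroup by one normal in the ambient group, and then pull back a free finite-index subgroup of the (virtually free) quotient. The only cosmetic differences are that the paper takes a \emph{characteristic} finite-index subgroup of $G_{n-1}$ rather than your normal core $K''=\bigcap_{h\in H}hKh^{-1}$, and it skips your preliminary passage to $H$ with $H/G_{n-1}$ free (handling the ``virtually free'' quotient only once, at the end).
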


\begin{proof} Let $H$ be a poly-$\cal {VF}$ group of length $n$ satisfying the 
  hypothesis of the statement.
  The proof is by induction on $n$. If $n=1$, then $H$ is virtually free, and hence free, since it is
  torsion free (\cite{S}). Therefore, assume that the lemma is true for all 
  poly-$\cal {VF}$ groups of length $\leq n-1$ satisfying the hypothesis. Consider a
  finitely presented normal series for $H$ giving the poly-$\cal {VF}$ structure.
  Then, $H_{n-1}$ is finitely presented, torsion free and has a poly-$\cal {VF}$  
  structure of length $n-1$. Hence, by the induction hypothesis, there is a finite index
  subgroup $K\leq H_{n-1}$ and $K$ is poly-$\cal F$. Since $H_{n-1}$ is finitely
  presented, we can find a finite index subgroup $K'$ of $K$ which is 
  also a characteristic subgroup of $H_{n-1}$. Hence, $K'$ is a poly-$\cal F$  
  group, and is a normal subgroup of $H=H_n$ with quotient virtually
  free. Let $q:H\to H/K'$ be the quotient map. Choose a free subgroup $L$ of $H/K'$ of finite
  index, then $q^{-1}(L)$ is a finite index poly-$\cal F$ subgroup of $H$.

  This proves the Proposition.\end{proof}

Now, we are ready to prove Theorem \ref{mtpf}.

\begin{proof}[Proof of Theorem \ref{mtpf}] 
  From the Table in Theorem \ref{All}, we see
  that the Artin group of type
  $\tilde A_n$ is a subgroup of the finite type Artin group of type $B_{n+1}$. Hence,
  by \cite{Br}, the Artin group of type $\tilde A_n$ is virtually poly-$\cal F$.
  
  Now, let $\cal A$ be an Artin group of type $\tilde B_n$, $\tilde C_n$ or $\tilde D_n$.
  Then, by Theorem \ref{All}, $\cal A$ can be embedded as a 
  normal subgroup in $\pi_1^{orb}(B_n({\Bbb C}(k,m;q)))$ of finite index, for some suitable $k,m,q$ and $n$.
  We know by Corollary \ref{vfg} that, $\pi_1^{orb}(PB_n({\Bbb C}(k,m;q)))$ is poly-$\cal {VF}$ by
  a normal series consisting of finitely presented subgroups. Therefore,
  the same is true for $\pi_1^{orb}(B_n({\Bbb C}(k,m;q)))$, since the
  pure orbifold braid group is embedded as a finite index normal subgroup in the
  orbifold braid group. Next, since 
  $\cal A$ is a subgroup of finite index in
  $\pi_1^{orb}(B_n({\Bbb C}(k,m;q)))$, it follows that 
  $\cal A$ is also poly-$\cal F$, by a normal series consisting of finitely presented
  subgroups. But by Theorem \ref{AO}, $\cal A$ is
  also torsion free. Hence, by Proposition \ref{fi} $\cal A$ is virtually poly-$\cal F$.

  The $G(de,e,r)$ ($d,r\geq 2$) type case is easily deduced from the fact that, 
  this Artin group can be embedded as a subgroup in the finite type Artin group of type
  $B_r$. See [\cite{CLL}, Proposition 4.1].
  
Therefore, we have completed the proof of Theorem \ref{mtpf}.\end{proof}

We now give the proof of Theorem \ref{FJC}, that is, the proof of
the isomorphism conjecture for the orbifold braid group of the orbifold
${\Bbb C}(k,m;q)$. 

  \begin{proof}[Proof of Theorem \ref{FJC}] We start with the following
    definition of a class of groups, which contains the class of groups $\cal C$ 
    defined in [\cite{Rou}, Definition 3.1].

    \begin{defn}\label{C}{\rm Let ${\cal D}$ denote the smallest class of groups 
satisfying the following conditions.

1. The fundamental group of any connected manifold of dimension $\leq 3$ belongs 
to ${\cal D}$.

2. If $H$ is a subgroup of a group $G$, then $G\in {\cal D}$ 
implies $H\in {\cal D}$. This reverse implication is also true if 
$H$ is of finite index in $G$.

3. If $G_1, G_2\in {\cal D}$ then $G_1\times G_2\in {\cal D}$.

4. If $\{G_i\}_{i\in I}$ is a directed system of groups and $G_i\in {\cal D}$ for each
$i\in I$, then the $\lim_{i\in I}G_i\in {\cal D}$. 

5. Let

\centerline{
\xymatrix{1\ar[r]&K \ar[r]&G\ar[r]^p&H\ar[r]&1}}

\medskip
\noindent
be a short exact sequence of groups. If $K$, $H$ and 
$p^{-1}(C)$, for any infinite cyclic subgroup $C$ of $H$, belong to 
$\cal D$ then $G$ also belongs to $\cal D$.

6. If a group $G$ has a normal subgroup $H$, so that $H$ is free and
$G/H$ is infinite cyclic, then $G$ belongs to $\cal D$.}\end{defn}

Note that, [\cite{Rou}, Definition 3.1] did not have the condition $6$.
In [\cite{Rou}, Theorem 3.3] we noted that the Farrell-Jones isomorphism conjecture
with coefficients, and finite wreath product is true for any group belonging
to the class $\cal C$. Two recent papers (\cite{BFW} and \cite{Wu}) help
us to conclude that the conjecture is
true for any member of $\cal D$. In $6$, when $H$ is finitely
generated, then the conjecture is proved in \cite{BFW}, and it is generalized for arbitrary free
group case in \cite{Wu}.

We will now prove that, the orbifold braid groups of the
orbifold ${\Bbb C}(k,m;q)$ belong to $\cal D$. 

Recall the following exact sequence from Remark \ref{es}. Here, $S$ is the
    orbifold ${\Bbb C}(k,m;q)$.
    
\centerline{
\xymatrix{1\ar[r]&\pi_1^{orb}(F)\ar[r]&\pi_1^{orb}(PB_n(S))\ar[r]&\pi_1^{orb}(PB_{n-1}(S))\ar[r]&1.}}
\noindent
Where $F=S-\{(n-1)-\text{regular points}\}$.

Note that, $\pi_1^{orb}(B_n({\Bbb C}(k,m; q)))$ contains
$\pi_1^{orb}(PB_n({\Bbb C}(k,m; q)))$ as a subgroup
of finite index, and hence, by $2$, it is enough to prove
that $\pi_1^{orb}(PB_n({\Bbb C}(k,m; q)))\in {\cal D}$.

The proof is by induction on $n$. Note that, for $n=1$,
$\pi_1^{orb}(PB_n({\Bbb C}(k,m; q)))\simeq \pi_1^{orb}({\Bbb C}(k,m; q))$, which
is virtually finitely generated free (see Corollary \ref{vfg}), and
hence by $1$ and $2$,  
$\pi_1^{orb}({\Bbb C}(k,m; q))\in {\cal D}$.
By the same argument $\pi_1^{orb}(F)\in {\cal D}$. We would like
to use $5$ now. Therefore, we assume
$\pi_1^{orb}(PB_{n-1}({\Bbb C}(k,m; q)))\in {\cal D}$ and let $C$ be an infinite
cyclic subgroup of $\pi_1^{orb}(PB_{n-1}({\Bbb C}(k,m; q)))$. The inverse image of $C$ under the 
surjective homomorphism in the above display is an extension of $\pi_1^{orb}(F)$ by $C$, that is,
isomorphic to the semi-direct product $\pi_1^{orb}(F)\rtimes C$. Since, $\pi_1^{orb}(F)$ has a
finitely generated free subgroup of finite index, it is easy to deduce that it has a
finitely generated free characteristic subgroup $K$ (say) of finite index. Therefore,
the action of $C$ on $\pi_1^{orb}(F)$ preserves $K$ and hence, $K\rtimes C$ is a subgroup
of $\pi_1^{orb}(F)\rtimes C$ of finite index. Therefore, $\pi_1^{orb}(F)\rtimes C\in{\cal D}$
using $6$ and $2$. Hence, $\pi_1^{orb}(PB_n({\Bbb C}(k,m; q)))\in {\cal D}$ by $5$.

Now, we see using the Table in Theorem \ref{All}, that the Artin group of type $\tilde D_n$ is a subgroup
of the orbifold braid group of ${\Bbb C}(0,2;(2,2))$. Therefore, the Artin group of type
$\tilde D_n$ belongs to $\cal D$, using $2$.

This completes the proof of the theorem.
\end{proof}

\newpage
\bibliographystyle{plain}
\ifx\undefined\bysame
\newcommand{\bysame}{\leavevmode\hbox to3em{\hrulefill}\,}
\fi

\end{document}